\newcolumntype{C}[1]{>{\Centering}m{#1}}
\def\RR{{\mathbb R}} 
\newcolumntype{M}[1]{>{\centering\arraybackslash}m{#1}} 
\newcolumntype{N}{@{}m{0pt}@{}}
\newcommand{\be} {\begin{equation}}
\newcommand{\ee} {\end{equation}}
\newcommand{\A} {\mathcal{A}}
\numberwithin{equation}{section}
\newtheorem{theorem}{Theorem}[section]
\newtheorem{lemma}[theorem]{Lemma}
\newtheorem{proposition}[theorem]{Proposition}
\newtheorem{corollary}[theorem]{Corollary}
\newtheorem{e-definition}{Definition\rm}
\newtheorem{remark}{Remark\/}[section]
\newtheorem{example}{Example\/}
\newproof{pf}{Proof}
\newproof{pott}{Proof of Theorem \ref{pdq}}
\def\neweq#1{\begin{equation}\label{#1}}
\def\endeq{\end{equation}}
\def\phi{\varphi}
\def\RR{{\mathbb R} }
\def\ps@pprintTitle{
 \def\@oddfoot{\footnotesize {\em \today}\hspace{15cm}}
 }
\begin{document}
\begin{frontmatter}

\title{Singular solutions for divergence-form elliptic equations involving regular variation theory: Existence and classification\tnoteref{title}}
\tnotetext[title]{The second author was supported by The Australian Research Council
Grant No. DP120102878 ``Analysis of non-linear partial differential equations describing singular phenomena''.}
\author[T.-Y. Chang]{Ting-Ying Chang}
\ead{T.Chang@maths.usyd.edu.au}
\author[F.C. Cirstea]{Florica C. C\^irstea\corref{cor1}}
\ead{florica.cirstea@sydney.edu.au}
\cortext[cor1]{Corresponding author.}
\address{School of Mathematics and Statistics, The University of Sydney,
 NSW 2006, Australia}

%\date{\currenttime \today}
%\maketitle

\begin{abstract} 

We generalise and sharpen several recent results in the literature regarding the existence and  
complete classification of the isolated singularities 
for a broad class of nonlinear elliptic equations of the form
\neweq{for} -{\rm div}\,(\mathcal A(|x|) \,|\nabla u|^{p-2} \nabla u)+b(x)\,h(u)=0\quad \text{in } B_1\setminus\{0\},
\endeq
where $B_r$ denotes the open ball with radius $r>0$ centred at $0$ in $\RR^N$ $(N\geq 2)$.  
We assume that $\A \in C^1(0,1]$, $b\in C(\overline{B_1}\setminus\{0\})$ and 
$h\in C[0,\infty)$ are positive functions associated with regularly varying 
functions of index $\vartheta$, $\sigma$ and $q$ at $0$, $0$ and $\infty$ 
respectively,  satisfying $q>p-1>0$ and $\vartheta-\sigma<p<N+\vartheta$. 
We prove that the condition $b(x) \,h(\Phi)\not \in L^1(B_{1/2})$ is sharp for the removability of all singularities at $0$ for the positive solutions of \eqref{for}, where $\Phi$ denotes the ``fundamental solution'' of  
$-{\rm div}\,(\mathcal A(|x|)\, |\nabla u|^{p-2} \nabla u)=\delta_0$ (the Dirac mass at $0$) in $B_1$, subject to $\Phi|_{\partial B_1}=0$. 
If $b(x) \,h(\Phi)\in L^1(B_{1/2})$, we show that any non-removable singularity at $0$ for 
a positive solution of \eqref{for} is either {\em weak} (i.e., 
$\lim_{|x|\to 0} u(x)/\Phi(|x|)\in (0,\infty)$) or {\em strong} ($ \lim_{|x|\to 0} u(x)/\Phi(|x|)=\infty$).  
The main difficulty and novelty of this paper, 
for which we develop new techniques, come from the explicit 
asymptotic behaviour of the strong singularity solutions in the critical case, which had 
previously remained open even for $\A=1$. We also study the existence and uniqueness of the positive 
solution of \eqref{for} with a prescribed
admissible behaviour at $0$ and a Dirichlet condition on $\partial B_1$.  

\end{abstract}

\begin{keyword} divergence-form elliptic equations \sep
isolated singularities 
 \sep regular variation theory 
\end{keyword}

\end{frontmatter}

\tableofcontents

\section{Introduction and main results} \label{introd}
The local behaviour of solutions for nonlinear partial differential equations of second order 
has been studied extensively in the last fifty years (see, for example, 
V\'eron \cite{veron96} for relevant background). The topic of isolated singularities represents an  
extremely active area of research concerning many different classes of nonlinear elliptic equations.  
Recent contributions 
include, on the one hand, boundary singularities (see, for example, \cite{mn,phve})
and, on the other hand, 
interior singularities for the fractional Laplacian \cite{caf,chenv}, the weighted $p$-Laplacian \cite{YS},   
non-homogeneous operators in divergence form \cite{mihai}, nonlinear equations with singular potentials \cite{florica,fp2} or with nonlinearities depending on the gradient \cite{mml,chci} to name only a few.     

Motivated by previous articles such as \cite{BCCT,florica,cd,fav,vv}, 
we aim to obtain a complete understanding of the isolated 
singularities for nonlinear elliptic equations of the form \eqref{une1} in the punctured unit ball $B_1\setminus\{0\}$ in $\RR^N$ ($N\geq 2$)
under the Assumptions $({\mathbf A_1})$--$({\mathbf A_3})$ given later. 
A prototype model is $\A(|x|)=|x|^\vartheta$, $b(x)=|x|^\sigma$ and $h(t)=|t|^{q-1}t$ for $q>p-1>0$ and $\vartheta-\sigma<p\leq N+\vartheta$. 
In the standard case $\A=b=1$, the profile of all positive solutions of 
$p$-Laplacian type equations with pure power nonlinearities, namely $ {\rm div} \, (|\nabla u|^{p-2}\nabla u)=|u|^{q-1} u$ in $B_1\setminus\{0\}$, is well clarified  (see \cite{fav,vv}), 
depending on the position of $q$ relative to the {\em critical exponent} 
$q_*=\frac{N(p-1)}{N-p}$ (with $q_*=\infty$ for $p=N$):
\begin{enumerate}
\item[{\rm (a)}] If $p-1<q<q_*$, then as $|x|\to 0$, exactly one of the following holds (see Friedman--V\'eron \cite{fav}): 
\begin{enumerate}
\item[{\rm (i)}] $u$ can be extended as a continuous solution of the same equation in $B_1$ (removable singularity);  
\item[{\rm (ii)}] There exists a positive number $\lambda$ such that $u(x)/\mu(x)\to \lambda $ (weak singularity) and, moreover,
$$ - {\rm div} \, (|\nabla u|^{p-2}\nabla u)+|u|^{q-1}u=\lambda^{p-1} \delta_0\quad \text{in }\ {\mathcal D}'(B_1). $$
Here, $\delta_0$ denotes the Dirac mass at $0$, whereas $\mu$ stands for the fundamental solution of the $p$-harmonic equation
$-{\rm div}\,(|\nabla u|^{p-2}\nabla u)=\delta_0$ in 
 $\mathcal D'(\RR^N)$ (in the sense of distributions). 
\item[{\rm (iii)}] $|x|^{p/(q-p+1)}u(x)\to \gamma_{N,p,q}$, where $\gamma_{N,p,q}:=\left[\left(\frac{p}{q-p+1}\right)^{p-1}
\left(\frac{pq}{q-p+1}-N\right)\right]^{1/(q-p+1)}$ (strong singularity).  
\end{enumerate}
\item[{\rm (b)}] If, in turn, $q\geq q_*$ (for $1<p<N$), then only (a)(i) occurs, 
(see V\'azquez--V\'eron \cite{vv}).
\end{enumerate}

The alternatives (i)--(iii) in (a) correspond respectively to a positive solution $u$
with $ \limsup_{|x|\to 0} u(x)/\mu(x)$ equal to zero, a positive finite number, 
and infinity. Furthermore, if $g\in C^1(\partial B_1)$ is a non-negative function and 
$\lambda\in (0,\infty)$, then the singular Dirichlet problem
 $ {\rm div} \, (|\nabla u|^{p-2}\nabla u)=|u|^{q-1} u$ in $B_1\setminus\{0\}$, 
with $\lim_{|x|\to 0} u(x)/\mu(x)=\lambda$ and $u=g$ on $\partial B_1$ admits a 
unique non-negative solution if and only if $q<q_*$ (see \cite[Theorems 1.1 and 1.2]{fav}). 
The positive solutions with a strong singularity at $0$ are {\em all} obtained as limits 
of solutions with a weak singularity at $0$. However, going beyond the power nonlinearities, 
the understanding of strong singularities had until now remained elusive. 
The removability of the strong singularity solutions is not completely clear even for 
Laplacian-type equations. The following question formulated by V\'azquez and
V\'eron \cite{vv2} is still open: {\em What is the weakest condition on a continuous 
non-decreasing function $h$ such that any isolated singularity of a non-negative 
solution of $\Delta u=h(u) $ in $B_1\setminus\{0\}$ with $N\geq 3$ is removable?}
By \cite[Remark~2.2]{vv2}, there are examples of continuous non-decreasing functions $h$ satisfying
\neweq{remv} 
\int_1^\infty  t^{-\frac{2(N-1)}{N-2}}\,h(t)\,dt=\infty\quad \text{and}\quad \int_1^\infty \frac{dt}{\sqrt{th(t)}}=\infty
\endeq
for which there exist no positive solutions with a weak singularity at $0$, 
but infinitely many positive solutions with a strong singularity at $0$. 
It is known (see \cite{vv2} or \cite{ve2}) that a necessary and sufficient 
condition for the removability of the weak singularities of 
the positive solutions is that $h$ satisfies the first integral condition in \eqref{remv}. 
Recently, the above question, together with a complete classification of the isolated singularities, 
has been settled by C\^irstea \cite{florica} in the framework of regular variation theory for more general semilinear elliptic equations.  

In two pioneering works, Serrin \cite{serrin64,serrin65} studied {\em a priori} estimates of solutions, the 
nature of removable singularities, and the behaviour of a positive solution 
in the neighbourhood of an isolated singularity for quasi-linear elliptic 
equations of the general form
\neweq{gform} 
{\rm div} \,{\mathbf A}(x,u, \nabla u)=B(x,u,\nabla u). 
\endeq 
For a domain $\Omega$ in $\RR^N$ with $0\in \Omega$, 
it is assumed that ${\mathbf A}(x,u,\xi)$ and $B(x,u,\xi)$ are, respectively, 
vector and scalar measurable functions defined in $\Omega\times \RR\times \RR^N$ 
satisfying the following growth conditions:
\neweq{gr} 
\left\{ 
\begin{aligned}
& |{\mathbf A}(x,u,\xi) |\leq \beta_0 |\xi|^{p-1}+\beta_1|u|^{p-1}+\beta_2,\\ 
& \xi\cdot {\mathbf A}(x,u,\xi)\geq  |\xi|^p-\beta_3 |u|^p-\beta_4,\\
&  |B(x,u,\xi)| \leq \beta_6 |\xi|^{p-1}+\beta_3|u|^{p-1}+\beta_5 \quad \text{for all } (x,u,\xi)\in \Omega\times \RR\times \RR^N,
\end{aligned} 
\right.
\endeq
where $1<p\leq N$ is a fixed exponent, $\beta_0$ is a positive constant and 
$\beta_i$ ($1\leq i\leq 6$) are measurable functions on $\Omega$ belonging 
to suitable Lebesgue classes: $\beta_1,\beta_2\in L^{N/(p-1-\varepsilon)}$, 
$\beta_6\in L^{N/(1-\varepsilon)}$ and 
$\beta_j\in L^{N/(p-\varepsilon)}$ for $j=3,4,5$, where $\varepsilon>0$. 
By \cite[Theorem~1]{serrin65}, 
if $u$ is a non-negative continuous solution of 
\eqref{gform} in $\Omega\setminus\{0\}$, then the following dichotomy holds:
\begin{enumerate}
\item either 
$u$ has a removable singularity at $0$;
\item or 
there exist positive constants $c_1$ and $c_2$ such that $c_1\leq u(x)/ 
\mu(|x|)\leq c_2 $ in a neighbourhood of zero.  
\end{enumerate}

In this paper, we address the singularity problem for quasi-linear elliptic 
equations in divergence form related to \eqref{gform} when {\em the growth
of $B$ is bigger than that of ${\mathbf A}$}, which is a challenge formulated by V\'eron \cite{veron96}. 
In this case, the main difficulty lies in the fact that solutions with strong singularities may appear.

The main feature of our study is to reveal a sharp and complete classification of the isolated singularities of 
\begin{equation} \label{une1}
{\rm div}\,(\mathcal A(|x|)\, |\nabla u|^{p-2} \nabla u) =b(x) \,h(u)\quad \text{in } B^*:=B_1\setminus\{0\}
\end{equation}
for a large class of nonlinearities, including model cases departing from power functions such as in Table~\ref{tab:title} below. 

\bigskip
\begin{minipage}{\linewidth}
\centering
\begin{tabular}{|M{1.5cm}|M{5cm}|M{3cm}|M{2.5cm}|N}
\hline
\bf   Example & $\A(|x|)$ as $|x|\to 0$ & $b(x)$ as $|x|\to 0$ & $h(t)$ as $t\to \infty$ &\\[10pt]
\hline
    $1$	&  $\displaystyle |x|^\vartheta \left(\ln \frac{1}{|x|}\right)^\alpha$ &  $\displaystyle|x|^\sigma\left(\ln \frac{1}{|x|}\right)^\beta$ 	& 
    $\displaystyle t^q(\ln t)^\gamma$ &\\[20pt]
\hline
    $2$	& $\displaystyle |x|^\vartheta \left(\ln \frac{1}{|x|}\right)^\alpha$ &  $\displaystyle |x|^\sigma\left(\ln \frac{1}{|x|}\right)^\beta$ & 
    $\displaystyle t^q\exp\left\{-(\ln t)^\nu\right\}$ &\\[20pt]
\hline 
 $3$	& $\displaystyle |x|^\vartheta \left(\ln \frac{1}{|x|}\right)^\alpha \exp\left\{-\frac{p-1}{q} \sqrt{\ln \frac{1}{|x|}}\right\}$ &  $\displaystyle |x|^\sigma\exp\left\{-\sqrt{\ln \frac{1}{|x|}}\right\}$ & $\displaystyle t^q\exp\left\{-(\ln t)^\nu\right\}$	&\\[20pt]
\hline
\end{tabular}\par
\captionof{table}{Examples} \label{tab:title} 
\medskip
\end{minipage}

In Examples~1--3 above, we take 
$\alpha$, $\beta$, $\gamma\in \RR$ and $\nu\in (0,1/2)$, $1<p< N+\vartheta$ and $q+1>p>\vartheta-\sigma$ (see Corollary~\ref{mil} for the classification). More generally, we work in the setting of regular variation theory inspired by 
C\^irstea and Du \cite{cd}, whose results (with $\A=1$) 
are here generalised and sharpened. 
The introduction of the weight function $\A$ in the operator in \eqref{une1} adds non-trivial difficulties. The first two conditions in \eqref{gr} are no longer
satisfied for $\mathbf A(x,u,\xi)=\A(|x|)\, |\xi|^{p-2}\xi$
since if $\vartheta>0$ (resp., $\vartheta<0$), then $\lim_{r\to 0^+} \A(r)=0$ (resp., $\infty$). A clear influence of $\A$ is felt in  
the behaviour of a positive solution of \eqref{une1} being compared not with $\mu$ but with a suitable ``fundamental solution'' $\Phi$ 
of the divergence-form 
operator $ {\rm div}\,(\mathcal A(|x|)\, |\nabla (\cdot )|^{p-2} \nabla (\cdot ))$ in $B_1$, see \eqref{2.1}. 

\vspace{0.2cm}
We assume the following structural conditions:

\begin{enumerate}
\item[{($\mathbf A_1$)}]
The function $\A\in C^1(0,1]$ is positive such 
that $\A(t)=t^\vartheta L_\A(t)$ with $1<p< N+\vartheta$ 
and $L_\A$ satisfies 
\begin{equation} \label{a3} 
 \lim_{t\to 0^+} \frac{t L_\A'(t)}{L_{\A}(t)}=0.
\end{equation}  

\item[{($\mathbf A_2$)}]
The function $h$ is continuous on $\RR$ and 
positive on $(0,\infty)$ with $h(0)=0$ and $h(t)/t^{p-1}$ bounded for small 
$t>0$, whereas $b$ is a positive continuous function on $\overline{B_1}\setminus\{0\}$.

\item[{($\mathbf A_3$)}]
There exist $q,\sigma\in \RR$ and functions $L_h$, $L_b$ 
that are slowly varying at $\infty$ and at $0$ respectively, such that
\begin{equation} \label{aoz}
\lim_{t\to \infty} \frac{h(t)}{t^q L_h(t)}=1 \ \text{and} \ 
\lim_{|x|\to 0} \frac{b(x)}{|x|^\sigma L_b(|x|)}=1 \  \text{ with } q+1>p>\vartheta-\sigma.
\end{equation}
\end{enumerate}

The condition in \eqref{a3} implies 
that $L_\A$ is slowly varying at $0$ (see Definition~\ref{def-reg} and Remark~\ref{nsw} in Appendix~\ref{appe}). 
A complete characterisation of slowly varying function at $0$ is provided by Theorem~\ref{rep-th}.
Without loss of generality, we assume that $L_h$ and $L_b$ satisfy the properties in \eqref{nor} (see Remark~\ref{asb} in Appendix~\ref{appe}).
For instances of $L_h$, one could choose any of the slowly varying functions at $\infty$ gathered in  
Example~\ref{example1} of Appendix~\ref{appe}. 
We note that the results of this paper can be extended for the case $p=N+\vartheta$ for certain cases of $L_\A$, see Remark~\ref{equal}.

\begin{e-definition} \label{defs} 
A function $u$ is said to be a {\em solution} ({\em sub-solution,
super-solution}) of \eqref{une1} if $u(x)\in C^1(B^*)$ and  
\begin{equation} \label{umn} \int_{B_1} \mathcal A(|x|)\, |\nabla u|^{p-2} \nabla u\cdot \nabla \phi\,dx +\int_{B_1} 
b(x)\,h(u)\,\phi\,dx=0   \qquad (\leq 0,\ \geq 0)\end{equation}
for all
functions (non-negative functions) $\phi(x)$ in $C^1_c(B^*)$, the space of all $C^1(B^*)$-functions with compact support in $B^*$.  
Furthermore, a positive solution $u$ of \eqref{une1} is said to be 
{\rm extended as a positive continuous solution} of \eqref{une1} in $B_1$ 
if there exists $\lim_{|x|\to 0}u(x)\in (0,\infty)$, the function $\A(|x|)\, |\nabla u|^{p-1}$ belongs to $L^1_{\rm loc}(B_1)$ and 
\eqref{umn} holds for every 
$\phi\in C^1_c(B_1)$.
\end{e-definition}

If $u$ is a positive solution of \eqref{une1} with $\limsup_{|x|\to 0} u(x)<\infty$, then both integrals in \eqref{umn} are well-defined for every
$\phi\in C^1_c(B_1)$. 
Indeed, $b\in L^1_{\rm loc}(B_1)$ since $\sigma>-N$ (from ($\mathbf A_1$) and ($\mathbf A_3$)), whereas the gradient estimates in Lemma~\ref{reg} give that $\A(|x|)\, |\nabla u|^{p-1}\in L^1_{\rm loc}(B_1)$ since $t^{1-p}\A(t)$ is regularly varying at $0$ with index $\vartheta-p+1$ (greater than $-N$).

\vspace{0.2cm}
Throughout this paper, we are concerned with non-negative solutions of \eqref{une1}. By the strong maximum principle, any non-negative solution
of \eqref{une1} is either identically zero or  
positive in $B^*$. Indeed, the conditions in Theorem~2.5.1 of \cite{PS} are satisfied
on any subset $\Omega\subset \subset B_1\setminus\{0\}$ with $\tilde {\mathbf A}(x,u,\nabla u)=\A(|x|)\,|\nabla u|^{p-2}\nabla u$ and $\tilde B(x,u,\nabla u)=-b(x)\,h(u)$ 
since $\A\in C(0,1]$ is a positive function, while $h$ and $b$ satisfy the properties in Assumption~ $({\mathbf A_2})$.    

\vspace{0.2cm}
\textbf{Fundamental Solution $\Phi$.}
Let $C_{N,p}:=\left(N\omega_N\right)^{-1/(p-1)} $, where $\omega_N$ denotes the 
volume of the unit ball in $\RR^N$. Assuming $({\mathbf A_1})$, we can define 
the ``fundamental solution'' of the operator ${\rm div}\,(\mathcal A(|x|)\, |\nabla (\cdot )|^{p-2} \nabla (\cdot ))$ in $\mathcal D'(B_1)$, namely 
\begin{equation}
\label{2.1}
\Phi(r):= C_{N,p} \int_r^1 \left( \frac{t^{1-N-\vartheta}}{L_\A(t)}\right)^{\frac{1}{p-1}}\,dt\quad \text{for all } r\in (0,1]. 
\end{equation} 
Note that $- {\rm div}\,(\mathcal A(|x|)\, |\nabla \Phi|^{p-2} \nabla \Phi)=\delta_0$ in $\mathcal D'(B_1)$ and 
$\Phi=0$ on $\partial B_1$. Moreover, $\lim_{r\to 0^+} \Phi(r)=\infty$ since $1<p<N+\vartheta$. 
We note that both $r \mapsto \Phi(r)$ and $r\mapsto -r \,\Phi'(r)$ are 
regularly varying at $0^+$ of index $-m_2$, where $m_2$ is defined in \eqref{stri}. Under Assumption $({\mathbf A_1})$, using Karamata's Theorem (see Theorem~\ref{kar2}), we find that
\begin{equation} \label{kphi}
\lim_{r\to 0^+} \frac{\ln \Phi(r)}{\ln \left(1/r\right)}=\lim_{r\to 0^+}\Upsilon(r)=m_2,\quad \text{where } 
\Upsilon(r):=\frac{r \left|\Phi'(r)\right|}{\Phi(r)}=\frac{C_{N,p} r^{-m_0} \left[L_\A(r)\right]^{-\frac{1}{p-1}}}{\Phi(r)}.
\end{equation}

We provide in Theorem~\ref{th1.2}(b) the sharp criteria, namely $b(x) \,h(\Phi)\not\in L^1(B_{1/2})$, for the 
removability of all singularities of the positive solutions of \eqref{une1}. 
In the case of non-removable singularities, that is $b(x) \,h(\Phi)\in L^1(B_{1/2})$, we give 
a complete classification of the singularities of \eqref{une1} in 
Theorem~\ref{th1.2}(a), accompanied by corresponding existence results 
in Theorem~\ref{wsol}.  
Our analysis brings new understanding of the behaviour of the 
solutions to \eqref{une1} with strong singularities at zero as the 
perturbation technique introduced in \cite{cd} for the subcritical 
case is not applicable in the critical case. The main innovation we 
develop is a perturbation technique which enables us to 
give precise explicit asymptotic formulas for the  
behaviour of the strong singular solutions. 
Our Theorems~\ref{th1.2} and \ref{wsol} extend the corresponding optimal results 
in \cite{BCCT} where $p=2$, $b=1$ and 
$h(t)=|t|^{q-1}t$ in \eqref{une1}. While the understanding of strong singularity 
solutions for Laplacian-type equations with power-like non-linearities in \cite{BCCT}
relied on the earlier work of Taliaferro \cite{tal}, this is no longer 
possible in our general context of quasi-linear equations such as \eqref{une1}. 

From Assumptions $({\mathbf A_1})$--$({\mathbf A_3})$, it follows that $m_0$, $m_1$ and $m_2$ are all {\em positive}, where we define
\neweq{stri}
m_0 := \frac{p+\sigma-\vartheta}{q-p+1},\qquad \quad m_1:= \frac{q-p+1}{p-1},\qquad \quad
m_2:=\frac{N+\vartheta-p}{p-1}. 
\endeq
Let us now define $q_*$, which shall be henceforth referred to as a {\em critical exponent}, namely
\neweq{critex}
q_*=\frac{N+\sigma}{m_2}.
\endeq
\begin{remark} \label{remark2}
Assuming $({\mathbf A_1})$--$({\mathbf A_3})$, 
we note that $b(x) \,h(\Phi)\in L^1(B_{1/2})$ is equivalent to
\begin{equation} \label{phicondition}
\int_{0^+} r^{N-1+\sigma}L_b(r)\,h(\Phi (r))\, \mathrm{d} r < \infty.
\end{equation}
The integrand in \eqref{phicondition} varies regularly at $0$ with index 
$N-1+\sigma-m_2q$. Hence, if $q\not=q_*$ then \eqref{phicondition} holds if and only if $q<q_*$, where $q_*$
is given by \eqref{critex}. If $q=q_*$, then \eqref{phicondition} may hold in some cases and fail in others. 
For example, if $L_\A=L_b=1$ and $h(t)=t^{q_*}(\ln t)^\alpha$ for $t>0$ large, then \eqref{phicondition} holds if and only if $\alpha<-1$. 
\end{remark}

Later in Corollary~\ref{mil}, we illustrate Theorem~\ref{th1.2} on the Examples of Table~\ref{tab:title}. 
Note that $L_h$ in Example~1 satisfies 
\neweq{doii}
t\longmapsto L_h(e^t) \text{ is regularly varying at } \infty \text{ with index } \gamma\in \RR. 
\endeq
In Example~3, we see that $L_\A$ and $L_b$ satisfy the following property
\neweq{doi} 
t\longmapsto \left[L_{\mathcal A}(e^{-t})\right]^{-\frac{q}{p-1}} L_b(e^{-t}) \text{ is regularly varying at } \infty\ \text{with index } j \in \RR. 
\endeq

From a practical viewpoint, we thus need to check $b(x)\, h(\Phi)\in L^1(B_{1/2})$ only for $q=q_*$. 
In such a critical case, assuming either \eqref{doii} or \eqref{doi}, then  
$b(x)\, h(\Phi)\in L^1(B_{1/2})$ if and only if $F(r)<\infty$, where we define 
\neweq{fodef}
F(r):= \int_0^r \xi^{-1} \left[L_\A(\xi)\right]^{-\frac{q_*}{p-1}} L_b(\xi)\,L_h(1/\xi)\,d\xi\quad \text{for } r>0\ \text{small}.
\endeq
The function $F$ plays an important role in the asymptotic behaviour at zero for a strong singularity solution of \eqref{une1}. 

\vspace{0.2cm}
We now state our first main result. 

\begin{theorem}[Classification of singularities and sharp removability results] \label{th1.2}
Let Assumptions $({\mathbf A_1})$--$({\mathbf A_3})$ hold. 
\begin{enumerate}
\item[{\rm (a)}]
If $b(x)\, h(\Phi)\in L^1(B_{1/2})$, then for every positive solution $u$ of \eqref{une1}, exactly one of the following cases occurs:
\begin{enumerate}
\item[{\rm (i)}] $u$ can be extended as a positive continuous solution of \eqref{une1} in the whole ball $B_1$.
\item[{\rm (ii)}] $u$ has a weak singularity at $0$, that is $\lim_{|x|\to 0}u(x)/\Phi(x)= \lambda\in (0,\infty)$ and, moreover, 
$u$ verifies
\neweq{b11}
 -  {\rm div}\,(\mathcal A(|x|)\, |\nabla u|^{p-2} \nabla u) +b(x)\,h(u) = \lambda^{p-1} \delta_0 \quad \text{in } \mathcal{D}'(B_1).
 \endeq

\item[{\rm (iii)}] $u$ has a strong singularity at $0$. Moreover, 
$\lim_{|x|\to 0} u(x)/ \tilde u(|x|)=1$, where $ \tilde u$ is given by 
\begin{equation} \label{stro}
\int_{\tilde u(r)}^\infty \frac{t^{-\frac{q+1}{p}}}{[L_h(t)]^{\frac{1}{p}}}\,dt=\int_0^r \left[M
\frac{ \xi^{\sigma-\vartheta } L_b(\xi)}{L_\A(\xi)}\right]^{\frac{1}{p}} \, d\xi\quad \text{with } \frac{1}{M}:=
q-\frac{N+\sigma}{m_0} \quad \text{ if } q<q_*. 
\end{equation}
On the other hand, in the critical case
$q=q_*$, then $\lim_{|x|\to 0} u(x)/ \tilde u(|x|)=1$ for
$\tilde u$ given by 
\begin{equation} 
\left\{\begin{aligned}
& \tilde u(r)=\left[m_1 m_0^{\gamma+1-p}F(r)\right]^{-\frac{1}{q_*-p+1}} L_\A^{-\frac{1}{p-1}}(r)  \,r^{-m_0}
&&  \text{if } \eqref{doii}\ \text{holds},&\\
& \int_c^{\tilde u(r)} \left[F(1/t)
\right]^{\frac{1}{q_*-p+1}} dt= \left( m_1m_0^{-p-j} \right)^{-\frac{1}{q_*-p+1}} 
  L_\A^{-\frac{1}{p-1}}(r) \, r^{-m_0}
&&  \text{if } \eqref{doi}\ \text{holds},&
\end{aligned}\right. \label{fg1}
\end{equation}
where $m_0,m_1$ and $F$ are prescribed by \eqref{stri} and \eqref{fodef}, respectively.  
In \eqref{fg1}, 
$c>0$ is a large constant. \end{enumerate}
\item[{\rm (b)}] If $b(x) \,h(\Phi)\not\in L^1(B_{1/2})$, then $q\geq q_*$ and
every positive solution of \eqref{une1} satisfies (a)(i).
\end{enumerate}
\end{theorem}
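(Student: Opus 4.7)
My plan is to prove part (a) first by a trichotomy based on $\ell := \limsup_{|x|\to 0} u(x)/\Phi(|x|) \in [0,\infty]$, and then to deduce part (b) as a corollary. The three sub-cases correspond to $\ell=0$ (removable), $\ell\in(0,\infty)$ (weak singularity), and $\ell=\infty$ (strong singularity).

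For case (a)(i), when $\ell=0$, i.e.\ $u(x) = o(\Phi(|x|))$ as $|x|\to 0$, I would combine the gradient estimates of Lemma~\ref{reg} with a test-function argument: multiply \eqref{une1} by a cut-off $\eta_\varepsilon(|x|)\phi$ vanishing in $B_\varepsilon$ and pass to the limit $\varepsilon\to 0^+$. The smallness of $u$ relative to $\Phi$ kills the boundary terms produced by the cut-off, so \eqref{umn} extends to all $\phi\in C_c^1(B_1)$, and standard $C^{1,\alpha}$-regularity for weighted $p$-Laplacian operators then yields continuity of $u$ on $B_1$. For case (a)(ii), with $\ell\in(0,\infty)$, I would first upgrade $\limsup$ to a genuine limit by bracketing $u$ between radial super- and sub-solutions close to $(\ell\pm\varepsilon)\Phi$ and applying the weak comparison principle, using $b(x)h(\Phi)\in L^1(B_{1/2})$ to ensure all relevant integrals converge. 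Once $\lim u/\Phi=\ell$ is established, integration against a cut-off and the defining property $-\mathrm{div}(\mathcal{A}(|x|)|\nabla\Phi|^{p-2}\nabla\Phi)=\delta_0$ identify the Dirac contribution, yielding \eqref{b11} with mass $\ell^{p-1}$.

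Case (a)(iii), $\ell=\infty$, is the heart of the paper. I would first promote $\limsup$ to $\lim$ by showing $u\geq k\Phi$ near zero for every $k>0$ via radial comparison. The profile $\tilde u$ is then obtained by sandwiching $u$ between radial super- and sub-solutions matching \eqref{stro} (subcritical) or \eqref{fg1} (critical) to leading order. In the subcritical case $q<q_*$, the leading balance between diffusion and source in the radial ODE
\begin{equation*}
-\bigl(r^{N-1}\mathcal{A}(r)|v'|^{p-2}v'\bigr)' = r^{N-1} b_{\mathrm{rad}}(r)\,h(v)
\end{equation*}
is purely algebraic, and the perturbation framework of C\^irstea--Du \cite{cd} together with Karamata's theorem (Theorem~\ref{kar2}) identifies $\tilde u$ as in \eqref{stro} and yields the sharp asymptotic $u/\tilde u\to 1$.

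The critical case $q=q_*$ is the principal obstacle. Here the algebraic balance degenerates---the diffusion exponent matches the source exponent---and the slowly varying correction is encoded in the integral $F(r)$ of \eqref{fodef}. My plan is to substitute the ansatz $u(x)=\tilde u(|x|)(1+o(1))$ with $\tilde u$ given by \eqref{fg1}, pass to logarithmic variables $s=\ln(1/r)$ so that \eqref{une1} becomes an autonomous perturbation, and exploit either \eqref{doii} or \eqref{doi} to extract the correct slowly varying factor dictated by $F$. Sharp radial super- and sub-solutions agreeing with $\tilde u$ to $(1+o(1))$ precision must then be constructed via a new perturbation argument tailored to the degenerate balance; this is the main technical innovation. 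Finally, part (b) follows as a corollary: by Remark~\ref{remark2}, $b(x)h(\Phi)\notin L^1(B_{1/2})$ already forces $q\geq q_*$; weak singularities are excluded because $u\sim\lambda\Phi$ together with the regular variation of $h$ gives $b(x)h(u)\sim\lambda^q b(x)h(\Phi)\notin L^1_{\mathrm{loc}}(B_1)$, contradicting \eqref{b11}; strong singularities are excluded because the profile $\tilde u$ built in case (a)(iii) requires precisely $b(x)h(\Phi)\in L^1(B_{1/2})$ for its defining integrals to converge.
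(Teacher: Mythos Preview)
Your overall trichotomy for part (a) matches the paper's, and the high-level strategy in (a)(i)--(iii) is sound. Two substantive issues remain.

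For (a)(iii), you propose passing to the logarithmic variable $s=\ln(1/r)$. The paper instead uses $s=\Phi(r)$, and this choice is not cosmetic: it is exactly the substitution that collapses the radial operator $\bigl(r^{N-1}\mathcal A(r)\,|v'|^{p-2}v'\bigr)'$ into the clean form $(p-1)\,|y'|^{p-2}y''$ with no residual weight, see \eqref{rado2}. With $s=\ln(1/r)$ the factor $r^{N-1+\vartheta}L_{\mathcal A}(r)$ survives, and the critical-case perturbation family \eqref{case1}, the asymptotics \eqref{doiii}--\eqref{reta1}, and the key two-sided estimate \eqref{aur10} all hinge on the $\Phi$-variable. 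You also skip the preliminary reduction of the non-radial solution $u$ to a \emph{radial} solution of the model equation \eqref{rd1} (via Harnack and \eqref{rd1}--\eqref{rd2}); the entire perturbation argument of Theorem~\ref{drum} is carried out at the ODE level after this reduction, not directly on $u$.

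Your argument for part (b) contains a genuine gap. Saying that ``the profile $\tilde u$ requires $b(x)h(\Phi)\in L^1$ for its defining integrals to converge'' does \emph{not} exclude strong singularities: it only says the profile formula is unavailable, not that no strongly singular solution exists. Moreover, for $q>q_*$ no profile $\tilde u$ is even defined in the statement, so your argument is vacuous there. The paper treats $q>q_*$ directly: the a priori bound \eqref{jam} has index $-m_0$ with $m_0<m_2$, which forces $u/\Phi\to 0$, and Lemma~\ref{remcon} then yields removability. For the delicate case $q=q_*$ the paper proves a separate necessary-condition result, Lemma~\ref{radsy}: any radial solution of \eqref{radio1} with a non-removable singularity already forces $b(x)h(\Phi)\in L^1(B_{1/2})$, via the substitution $s=\Phi(r)$ combined with Taliaferro's ODE theory for the weak case and a comparison with an auxiliary critical-exponent equation \eqref{ent} for the strong case. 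Your weak-singularity exclusion is likewise circular as written, since invoking \eqref{b11} presupposes the $L^1$ hypothesis under which (a)(ii) is established.
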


\begin{remark} \label{history} 
\begin{enumerate}
\item[{\rm (i)}]
 When $\A=b=1$ and $h(t)=|t|^{q-1} t$, our Theorem~\ref{th1.2}(a) recovers \cite[Theorem~2.1]{fav}. 
Moreover, Theorem~\ref{th1.2}(a) generalises and sharpens \cite[Theorem~1.1]{cd}, 
which analysed the case $\A=1$ and $q<q_*$. Our Theorem~\ref{th1.2} is also 
established under the optimal condition for the existence of solutions with 
singularities at $0$ for \eqref{une1}. Even for $\A=1$, the behaviour of the 
strong singularity solutions in the critical case $q=q_*$ is new, being 
obtained via a perturbation technique we devise in this paper (see \S\ref{sscrit}).

\item[{\rm (ii)}] When $\A=b= 1$ and $h(t)=t^{q}$, Theorem~\ref{th1.2}(b) recovers the removability result of 
\cite{brve} for $p=2$ and \cite{vv} for $1<p<N$. By letting $\A=1$ in Theorem~\ref{th1.2}(b), we also obtain a   
sharp version of \cite[Theorem~1.3]{cd}.  
\end{enumerate}
\end{remark}

{\bf Notation.} 
By $f_1(t)\sim f_2(t)$ as
$t\to t_0$ for $t_0\in \RR\cup\{\infty\}$, we mean that $\lim_{t\to t_0} f_1(t)/f_2(t)=1$. 

\vspace{0.2cm}
In Theorem~\ref{th1.2} for $q<q_*$, the function $\tilde u$ in \eqref{stro} is well-defined, regularly varying at $0$ with index $-m_0$ and 
\neweq{sqa}
\tilde u(r) \left[ L_h(\tilde u(r))\right]^{\frac{1}{q-p+1}}\sim \left[ \frac{m_0^p}{M} \frac{L_\A(r)}{L_b(r)}\right]^{\frac{1}{q-p+1}}r^{-m_0}\quad
\text{as } r\to 0^+. 
\endeq
Indeed, the integral in the left-hand side of \eqref{stro} is well-defined 
since the integrand is regularly varying at $\infty$ with index 
$-(q+1)/p <-1$ from the assumption $q>p-1$. The right-hand side of 
\eqref{stro} also exists since the integrand is regularly varying 
at $0^+$ with index $(\sigma-\vartheta)/p >-1$ by virtue of 
$\sigma>\vartheta-p$. 
By Karamata's Theorem in Appendix~\ref{appe}, \eqref{stro} implies \eqref{sqa}.  
Furthermore, if   
\eqref{doii} holds, then $L_h(\tilde u(r))\sim m_0^\gamma L_h(1/r)$ as $r\to 0^+$ so that  
\eqref{sqa} is refined by 
\begin{equation}
\tilde u(r)\sim \left[m_0^{\gamma-p} M \frac{L_h (1/r) \,L_b (r)}{ L_{\A}(r)} \right]^{-\frac{1}{q-p+1}}r^{-m_0} \quad \text{as } r \to 0^+.
\end{equation}

\begin{remark}
A prototype model for \eqref{doii} is $L_h(t)\sim (\ln t)^\gamma$ as $t\to \infty$, where $\gamma\in \RR$. More generally, \eqref{doii} holds if
$L_h(T)\sim \mathcal L(T)$ as $T\to \infty$ and 
$
\mathcal L(T)=\Pi_{i=1}^k (\ln_{m_i} T)^{\beta_i}$ for $ T>0$ large, where
$k$ and $m_i$  are positive integers and $\beta_i\in \RR$ for every $1\leq i\leq k$. 
We use the notation $\ln_{m_i} $ for the $m_i$-iterated natural logarithm. Without loss of generality, we can take 
$1\leq m_1<m_2<\ldots <m_k$. Then $t\longmapsto L_h(e^t) $ is regularly varying at $\infty$ with index equal to $\beta_1$ (respectively, $0$)
if $m_1=1$ (respectively, $m_1>1$). Similarly, \eqref{doi} 
is verified if  $\left[L_{\mathcal A}(1/T)\right]^{-\frac{q}{p-1}} L_b(1/T)\sim \mathcal L(T)$ as $T\to \infty$. 
\end{remark}

Next, in our second main result, under suitable conditions, we show that there exist positive solutions of 
\eqref{une1} in any of the categories appearing in the complete 
classification of Theorem~\ref{th1.2}. Furthermore, we obtain a uniqueness result for \eqref{une1} subject to a Dirichlet condition on $\partial B_1$ with a prescribed, admissible behaviour at zero.

\begin{theorem}[Existence and uniqueness] \label{wsol}
Let Assumptions $({\mathbf A_1})$--$({\mathbf A_3})$ hold. Assume that $h$ is a non-decreasing function on $(0,\infty)$ and $g \in C^1 (\partial B_1)$ is an arbitrary non-negative function. We consider the following problem
\neweq{prob2} 
\left\{ \begin{aligned}
& {\rm div}\,(\mathcal A(|x|)\, |\nabla u|^{p-2} \nabla u) =b(x) \,h(u)\quad \text{in } B^*:=B_1\setminus\{0\},\\
&  \lim_{|x|\rightarrow 0} \frac{u(x)}{\Phi(x)}=\lambda, \quad u\big|_{\partial B_1}=g, \quad  u>0 \quad \text{in } B^*. 
\end{aligned}
\right.
\endeq
\begin{enumerate}
\item[{\rm (i)}] If $\lambda=0$ and $g \nequiv 0$ on $\partial B_1$, then \eqref{prob2} has a unique solution.
\item[{\rm (ii)}] If $\lambda \in (0,\infty]$, then \eqref{prob2} admits solutions if and only if $b(x)\,h(\Phi) \in L^1(B_{1/2})$. 
\item[{\rm (iii)}] Assume that $b(x)\,h(\Phi) \in L^1(B_{1/2})$ and $h(t)/t^{p-1}$ is non-decreasing for $t>0$.
\begin{enumerate}
\item For $\lambda \in (0,\infty)$, then \eqref{prob2} has a unique solution. The same conclusion holds for $\lambda=\infty$ and $q<q_*$.
\item For $\lambda=\infty$ and $q=q_*$, then \eqref{prob2} has a unique solution provided that either \eqref{doii} or \eqref{doi} holds.
\end{enumerate}
\end{enumerate}
 \end{theorem}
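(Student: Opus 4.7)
\emph{Strategy.} The plan is to produce every required solution as a monotone limit of solutions of Dirichlet problems on the annuli $B_1\setminus\overline{B_{1/n}}$, controlled above by a carefully tailored super-solution and below by $0$, and to identify its singular profile at $0$ by invoking the classification already available from Theorem~\ref{th1.2}. Uniqueness will then be established by a Díaz--Saá / Picone-type comparison exploiting the monotonicity of $t\mapsto h(t)/t^{p-1}$.

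\emph{Step 1 (approximation on annuli).} For fixed $\lambda\in[0,\infty)$ and $n\in\N$ large, I would solve
\[
-{\rm div}(\mathcal A(|x|)|\nabla u_n|^{p-2}\nabla u_n)+b(x)h(u_n)=0\quad\text{in } B_1\setminus\overline{B_{1/n}},
\]
with $u_n=\lambda\Phi(1/n)$ on $\partial B_{1/n}$ and $u_n=g$ on $\partial B_1$. Existence of $u_n$ follows from standard monotone-operator / sub-super-solution methods: $0$ is a sub-solution, while a super-solution is $\lambda\Phi+V$, where $V$ is the $\mathcal A$-weighted $p$-harmonic extension of $\max\{g,\|g\|_\infty+1\}$. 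Monotonicity of $h$ and the weak comparison principle yield $u_n\le u_{n+1}\le\lambda\Phi+V$, so $u_n$ increases to some $u\in C^1(B^*)$; interior regularity (Lemma~\ref{reg}) legitimises the passage to the limit and preserves the boundary condition $u=g$ on $\partial B_1$. By construction $\limsup_{|x|\to 0}u(x)/\Phi(|x|)\le\lambda$ with the reverse inequality from $u\ge u_n$. For $\lambda=0$ and $g\nequiv 0$, the strong maximum principle gives $u>0$ in $B^*$, and Theorem~\ref{th1.2}(a) forces a removable singularity, proving (i); for $\lambda\in(0,\infty)$, Theorem~\ref{th1.2}(a) places $u$ in the weak-singularity class with ratio exactly $\lambda$.

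\emph{Step 2 (part (ii) and $\lambda=\infty$).} The ``only if'' direction in (ii) is immediate from Theorem~\ref{th1.2}(b), which excludes non-removable singularities when $b(x)h(\Phi)\notin L^1(B_{1/2})$. For the ``if'' direction with $\lambda=\infty$, let $u^{(\lambda)}$ denote the weak-singularity solution of Step~1 and set $u_\infty:=\lim_{\lambda\to\infty}u^{(\lambda)}$, which is monotone by comparison. To ensure $u_\infty<\infty$ in $B^*$ I would construct an explicit local super-solution $\overline U$ in some punctured ball $B_{r_0}\setminus\{0\}$ satisfying $\overline U(x)\sim(1+\e)\tilde u(|x|)$ as $|x|\to 0$, where $\tilde u$ is given by \eqref{stro} for $q<q_*$ or by \eqref{fg1} for $q=q_*$ under \eqref{doii} or \eqref{doi}. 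The algebraic identities defining $\tilde u$ in Theorem~\ref{th1.2}(a)(iii) make $\tilde u$ an exact leading-order solution of \eqref{une1}, and the $(1+\e)$-perturbation converts the remainder into the favourable sign via the dominance of $h(u)$ over the divergence term. Combined with the boundary barrier from Step~1, $\overline U$ caps every $u^{(\lambda)}$ from above; hence $u_\infty$ is finite, solves \eqref{une1} in $B^*$, and cannot carry a weak singularity (as it dominates every $u^{(\lambda)}$), so Theorem~\ref{th1.2}(a) forces a strong singularity with $u_\infty/\tilde u\to 1$.

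\emph{Step 3 (uniqueness) and main obstacle.} Given two positive solutions $u_1,u_2$ of \eqref{prob2} in the same class, I would apply the Díaz--Saá identity on the annulus $B_{r_n'}\setminus B_{r_n}$, testing the equation for $u_i$ against $(u_j^p-u_i^p)_+/u_i^{p-1}$ and subtracting. The monotonicity of $t\mapsto h(t)/t^{p-1}$, together with the standard convexity inequality for $|\nabla(\cdot)|^{p-2}\nabla(\cdot)$, forces $u_1\equiv u_2$ provided the boundary integrals on $\partial B_{r_n}$ and $\partial B_{r_n'}$ vanish in the limit. The outer integral vanishes since $u_i=g$ on $\partial B_1$. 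For $\lambda\in(0,\infty)$ the inner integral vanishes thanks to $u_i/\Phi\to\lambda$ and the gradient asymptotics of $\Phi$. The most delicate point, and the main anticipated obstacle, is case (iii)(b): for $\lambda=\infty$ with $q=q_*$, one only knows $u_i\sim\tilde u$ to leading order, so one must upgrade this into a matching estimate for the flux $\mathcal A(|x|)|\nabla u_i|^{p-2}\nabla u_i$ at $\partial B_{r_n}$. This requires a refined ODE-type asymptotic obtained from the proof of Theorem~\ref{th1.2}(a)(iii), and it is precisely at this point that the structural hypotheses \eqref{doii} and \eqref{doi} supply the quantitative control of the slowly varying correction $F$ defined in \eqref{fodef} that is needed to close the boundary integrals and conclude uniqueness.
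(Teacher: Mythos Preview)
Your overall architecture (annular approximation, monotone limits, invoking Theorem~\ref{th1.2} for classification) matches the paper, but there are genuine gaps and your uniqueness strategy diverges from the paper's in ways that create difficulties.

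\textbf{Existence for $\lambda\in(0,\infty)$.} Your Step~1 does not deliver the crucial lower bound $\liminf_{|x|\to 0}u(x)/\Phi(|x|)\ge\lambda$. Knowing only $\limsup\le\lambda$ and then appealing to Theorem~\ref{th1.2}(a) tells you that \emph{some} limit exists in $[0,\lambda]$, but not that it equals $\lambda$; your phrase ``with the reverse inequality from $u\ge u_n$'' is not justified, since $u_n$ carries no information about $u(x)/\Phi(|x|)$ for $|x|<1/n$. (Also, with inner data $\lambda\Phi(1/n)+C_0$ as in the paper, the sequence $u_n$ is \emph{decreasing}, not increasing.) The paper fills this gap by a separate sub-solution construction: via the change of variable $s=\Phi(r)$ and Taliaferro's ODE existence theorem, one produces a radial $w$ with $w\sim\lambda\Phi$ near $0$, shows $(1-\varepsilon)w$ is a sub-solution of \eqref{une1}, and compares it to each $u_n$.

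\textbf{Existence for $\lambda=\infty$.} Your barrier $\overline U\sim(1+\varepsilon)\tilde u$ only makes sense for $q<q_*$, or for $q=q_*$ under \eqref{doii}/\eqref{doi}. But part~(ii) asserts existence for $\lambda=\infty$ whenever $b\,h(\Phi)\in L^1(B_{1/2})$, with no such extra hypothesis. The paper avoids this by using the \emph{a priori} estimate of Lemma~\ref{apr}, which bounds every solution uniformly on compact subsets of $B^*$ regardless of the fine structure at $q=q_*$; this is enough to pass to the limit $u_{k,g}\uparrow u_{\infty,g}$.

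\textbf{Uniqueness.} Your D\'{\i}az--Saa plan requires $t\mapsto h(t)/t^{p-1}$ non-decreasing, which is assumed only in part~(iii). For part~(i) you therefore have no uniqueness argument; the paper instead tests the two equations against $\phi_\varepsilon=(u_1-u_2)w_\varepsilon(|x|)$ and uses only monotonicity of $h$, together with $|x|\,|\nabla u_j(x)|\to 0$ from Lemma~\ref{remcon}, to kill the inner boundary term. For part~(iii), your anticipated obstacle (controlling the flux of $u_i$ on $\partial B_{r_n}$) is real and non-trivial, but the paper sidesteps it entirely: once the classification gives $u_1(x)/u_2(x)\to 1$ as $|x|\to 0$, the hypothesis that $h(t)/t^{p-1}$ is non-decreasing makes $(1+\varepsilon)u_1$ a super-solution dominating $u_2$ near $0$ and on $\partial B_1$, and the comparison principle (Lemma~\ref{cp}) on shrinking annuli yields $u_2\le(1+\varepsilon)u_1$; letting $\varepsilon\to 0$ and swapping roles finishes. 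No gradient matching is needed.
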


\begin{remark} \label{history2} 
When $b=1$ and $h(t)=|t|^{q-1} t$, our Theorem~\ref{wsol} recovers previous results such as \cite[Theorems 1.2]{fav} (with $\A=1$) and \cite[Theorem 2]{BCCT} (with $p=2$). Moreover, in Theorem~\ref{wsol}, we generalise \cite[Theorem 1.2]{cd} (where $\A=1$) by sharpening the condition under which there exists a unique singular solution to \eqref{prob2}.    
\end{remark}

\begin{remark} \label{equal} In this paper, we focus on the case $p<N+\vartheta$ in Assumption~$({\mathbf A_1})$. We mention that Theorem~\ref{th1.2} and Theorem~\ref{wsol} remain valid also for $p=N+\vartheta$ provided that $\limsup_{r\to 0^+} L_\A(r)<\infty$ (which ensures that $\Phi(r)\to \infty$ as $r\to 0^+$). 
Since $m_2$ in \eqref{stri} becomes zero for $p=N+\vartheta$, we must understand $q_*=\infty$ in connection with the fact that 
$b(x) \,h(\Phi)\in L^1(B_{1/2})$ holds for any $q\in (p-1,\infty)$ and thus in Theorem~\ref{th1.2} only the assertion of (a) is meaningful in which the strong singularity behaviour of (iii) is given by \eqref{stro}.  
\end{remark}

{\em Structure of the paper.} 
We shall always assume $({\mathbf A_1})$--$({\mathbf A_3})$. In Section~\ref{section2}, we apply our main results, specifically on the Examples given by Table~\ref{tab:title}. 
In Section~\ref{sec2} we prove Theorem~\ref{th1.2}(a), which fully classifies the nature of all possible singularities at $0$ for the positive solutions of \eqref{une1} when
$b(x)\,h(\Phi)\in L^1(B_{1/2})$. We emphasise that this is an optimal condition under which, besides weak singularity solutions, there can arise strong singularity solutions of \eqref{une1} (that is $\lim_{|x|\to 0} u(x)/\Phi(x)=\infty$) as stated by Theorem~\ref{wsol} to be proved in Section~\ref{weaksol}. 
The proof of Theorem~\ref{th1.2}(a), and in particular, the analysis of (radial) solutions with strong singularities at $0$ in Theorem~\ref{drum}, represent the crux of this paper. Even in the case $\A=1$, Theorem~\ref{th1.2}(a) is new with regard to the explicit derivation of the asymptotic 
behaviour near $0$ of solutions with strong singularities in the critical case $q=q_*$.  
To establish Theorem~\ref{th1.2}(a), we need to invoke some auxiliary results such as \textit{a priori} estimates, a spherical 
Harnack-type inequality and regularity results, whose proofs are deferred until Section~\ref{basic}. 
In Section~\ref{sec3}, we prove Theorem~\ref{th1.2}(b), which establishes $b(x)\,h(\Phi)\not\in L^1(B_{1/2})$ as a sharp condition such that all positive solutions of \eqref{une1} have a removable singularity at zero, that is, they can be extended as positive continuous solutions of \eqref{une1} in the whole ball $B_1$. 
For the reader's convenience, we gather in Appendix~\ref{appe} the necessary concepts and properties related to the regular variation theory.

\section{Applications} \label{section2}
 
\begin{corollary} \label{mil}
Let Assumptions $({\mathbf A_1})$--$({\mathbf A_3})$ hold. 
Let $\alpha,\beta,\gamma\in \RR$ and $\nu\in (0,1/2)$ be arbitrary. 

\medskip
\begin{minipage}{\linewidth}
\centering
\begin{tabular}{|M{1.5cm}|M{4cm}|M{3cm}|M{3cm}|N}
\hline
\bf    Example & $L_{\A}(r)$ as $r\to 0^+$ & $L_b(r)$ as $r\to 0^+$ & $L_h(t)$ as $t\to \infty$ &\\[10pt]
\hline
    $1$	&  $\displaystyle\left(\ln \frac{1}{r}\right)^\alpha$ &  $\displaystyle\left(\ln \frac{1}{r}\right)^\beta$ 	& $\displaystyle(\ln t)^\gamma$ &\\[20pt]
\hline
    $2$	& $\displaystyle\left(\ln \frac{1}{r}\right)^\alpha$ &  $\displaystyle\left(\ln \frac{1}{r}\right)^\beta$ & $\displaystyle\exp\left\{-(\ln t)^\nu\right\}$ &\\[20pt]
\hline 
 $3$	& $\displaystyle\left(\ln \frac{1}{r}\right)^\alpha \exp\left\{-\frac{p-1}{q} \sqrt{\ln \frac{1}{r}}\right\}$ &  $\displaystyle\exp\left\{-\sqrt{\ln \frac{1}{r}}\right\}$ & $\displaystyle\exp\left\{-(\ln t)^\nu\right\}$	&\\[20pt]
\hline
\end{tabular} \par
\captionof{table}{Examples (corresponding to those in Table~\ref{tab:title})} \label{tab:title2} 
\medskip
\end{minipage}
\begin{enumerate}
\item[{\bf (A)}] If $q<q_*$ in Examples 1--3, then for any positive solution $u$ of \eqref{une1} exactly one of the following holds:  
\begin{enumerate}
\item[{\rm (i)}] $u$ can be extended as a positive continuous solution of \eqref{une1} in $B_1$.
\item[{\rm (ii)}] $u$ has a weak singularity at $0$, that is $\lim_{|x|\to 0}u(x)/\Phi(x)= \lambda\in (0,\infty)$ and, moreover, $u$ satisfies \eqref{b11}. 
\item[{\rm (iii)}] $u$ has a strong singularity at $0$ and, moreover, as $|x|\to 0$, the behaviour of $u$ is given by Table~\ref{tab:title3} below. 

\medskip
\begin{minipage}{\linewidth}
\centering
\begin{tabular}{|M{1.5cm}|M{10cm}|N}
\hline
  \bf  Example &  $u(x) $ is asymptotically equivalent to &\\[10pt]
\hline
    $1$	&  $  \displaystyle |x|^{-m_0} \left[ \frac{m_0^{p-\gamma}}{M}
     \left(\ln \frac{1}{|x|}\right)^{\alpha - \beta - \gamma} \right]^{\frac{1}{q-p+1}}$ &\\[24pt]
\hline
    $2$	& $\displaystyle  |x|^{-m_0}\left[\frac{m_0^p}{M} \left(\ln \frac{1}{|x|}\right)^{\alpha - \beta} \right]^{\frac{1}{q-p+1}}\exp \left\{\frac{1}{q-p+1}\left(m_0\ln \frac{1}{|x|}\right)^\nu \right\} $ &\\[24pt]
\hline 
 $3$	& $ \displaystyle 
  |x|^{-m_0}\left[\frac{m_0^p}{M} 
 \left(\ln \frac{1}{|x|}\right)^{\alpha}\right]^{\frac{1}{q-p+1}}\, 
 \exp \left\{ \frac{1}{q}\left(\ln\frac{1}{|x|}\right)^{\frac{1}{2}}+ \frac{1}{q-p+1}\left(m_0\ln \frac{1}{|x|}\right)^\nu \right\} $ &\\[24pt]
\hline
\end{tabular} \par
\captionof{table}{Strong singularity behaviour for $q<q_*$} \label{tab:title3} 
\medskip
\end{minipage}
\end{enumerate}

\item[{\bf (B)}] If $q=q_*$ (and, in addition, $\alpha q_*/(p-1)>\beta+\gamma  +1$ for Example 1), then the trichotomy in {\bf (A)} remains valid except
(iii) which is replaced by the behaviour in Table~\ref{tab:title4} below.

\medskip
\begin{minipage}{\linewidth}
\centering
\begin{tabular}{|M{1.5cm}|M{11cm}|N}
\hline
  \bf  Example & $ u(x) $  is asymptotically equivalent to &\\[10pt]
\hline
    $1$	& $\displaystyle |x|^{-m_0}
 \left[ 
 \frac{m_0^{p-1-\gamma}\left(\frac{\alpha q_*}{p-1}-\beta-\gamma-1\right)}{m_1}\left(\ln \frac{1}{|x|}\right)^{\alpha -\beta-\gamma-1}
 \right]^{\frac{1}{q-p+1}} $ &\\[28pt]
\hline
    $2$	& $\displaystyle |x|^{-m_0}   \left[ \frac{\nu \,m_0^{p-1+\nu} }{m_1}  \left( \ln \frac{1}{|x|} \right)^{\alpha -\beta+\nu-1}\right]^{\frac{1}{q-p+1}}
 \exp \left\{ \frac{1}{q-p+1} \left(m_0\ln \frac{1}{|x|}\right)^\nu \right\}   $ &\\[28pt]
\hline 
 $3$	& $ \displaystyle |x|^{-m_0}  \left[  \frac{\nu \,m_0^{p-1+\nu} }{m_1}  \left( \ln \frac{1}{|x|} \right)^{\alpha +\nu-1} \right]^{\frac{1}{q-p+1}}\,\exp \left\{  \frac{1}{q}\left(\ln\frac{1}{|x|}\right)^{\frac{1}{2}}+\frac{1}{q-p+1}\left(m_0\ln \frac{1}{|x|}\right)^\nu \right\}  $ &\\[28pt]
\hline
\end{tabular}\par
\captionof{table}{Strong singularity behaviour for $q=q_*$} \label{tab:title4} 
\medskip
\end{minipage}

\item[{\bf (C)}] If $q>q_*$, then any positive solution $u$ of \eqref{une1} can be extended as a positive continuous solution of \eqref{une1} in $B_1$. 
For Example~1, this conclusion also holds for $q=q_*$ and $\alpha q_*/(p-1)\leq \beta+\gamma  +1$.
\end{enumerate}
\end{corollary}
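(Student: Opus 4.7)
The plan is to specialize Theorem~\ref{th1.2} (together with Remark~\ref{remark2}) to each of the three examples, after first checking that $L_\A$, $L_b$, $L_h$ meet the hypotheses $(\mathbf A_1)$--$(\mathbf A_3)$. For instance, $L_\A$ in Example~3 obeys \eqref{a3} because $tL_\A'(t)/L_\A(t)=-\alpha/\ln(1/t)+(p-1)/(2q\sqrt{\ln(1/t)})\to 0$ as $t\to 0^+$; similar direct differentiations handle Examples~1--2 and establish slow variation of $L_b$, $L_h$. Granted this, the alternatives (i) and (ii) in parts (A) and (B), and the removability in (C), follow at once from Theorem~\ref{th1.2} as soon as one knows whether $b(x)\,h(\Phi)\in L^1(B_{1/2})$. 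By Remark~\ref{remark2} this integrability is automatic for $q<q_*$ and fails for $q>q_*$, so the only delicate regime is the critical one $q=q_*$.

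For $q=q_*$ I would compute $F(r)$ from \eqref{fodef}. In Example~1 the substitution $\eta=\ln(1/\xi)$ gives $F(r)=\int_{\ln(1/r)}^\infty \eta^{-\alpha q_*/(p-1)+\beta+\gamma}\,d\eta$, which is finite precisely when $\alpha q_*/(p-1)>\beta+\gamma+1$; this yields the dichotomy between the last line of (B) and the last line of (C). In Examples~2 and 3 the factor $\exp\{-(\ln(1/\xi))^\nu\}$ (with $\nu\in(0,1/2)$) makes $F(r)$ automatically finite, so (B) applies. I would then verify \eqref{doii} and \eqref{doi}: \eqref{doii} holds exactly for Example~1 (index $\gamma$), while \eqref{doi} holds in all three examples; in Example~3 the square-root terms in $[L_\A(e^{-t})]^{-q/(p-1)}=t^{-\alpha q/(p-1)}\exp(\sqrt t)$ and $L_b(e^{-t})=\exp(-\sqrt t)$ cancel cleanly to give the regularly varying function $t^{-\alpha q/(p-1)}$, so the index is $j=-\alpha q/(p-1)$.

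The explicit profiles in Tables~\ref{tab:title3} and \ref{tab:title4} are then extracted by substituting the chosen $L_\A$, $L_b$, $L_h$ into the relevant formula of Theorem~\ref{th1.2}(a)(iii). For Example~1 in the subcritical case, the refinement of \eqref{sqa} under \eqref{doii} yields the table entry directly. For Example~1 in the critical case I would apply the first formula in \eqref{fg1}: the exponent $[\alpha q_*/(p-1)-\beta-\gamma-1]/(q_*-p+1)$ produced by $F(r)$, combined with $-\alpha/(p-1)$ from $L_\A^{-1/(p-1)}(r)$, collapses to $(\alpha-\beta-\gamma-1)/(q_*-p+1)$ after a short algebraic simplification using $q_*=(N+\sigma)/m_2$, matching the table.

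The main obstacle sits in Examples~2 and 3, where \eqref{doii} fails and $\tilde u$ is defined only implicitly. For (A)(iii) I would recover Table~\ref{tab:title3} by iterating \eqref{sqa}: the equation forces $\ln\tilde u(r)\sim m_0\ln(1/r)$ at leading order, whence $L_h(\tilde u(r))\sim\exp\{-(m_0\ln(1/r))^\nu\}$, and substituting back yields the displayed expression. The restriction $\nu<1/2$ is used precisely here to guarantee that the next-order error in the expansion of $(\ln\tilde u)^\nu$ around $(m_0\ln(1/r))^\nu$ is of order $(\ln(1/r))^{2\nu-1}=o(1)$, so the exponential factor in Table~\ref{tab:title3} is determined to within a $1+o(1)$ multiplier. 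For (B)(iii) with Examples~2 and 3 I would first pin down $F(r)$ via the incomplete-gamma-type asymptotic $\int_x^\infty \eta^a e^{-\eta^\nu}\,d\eta\sim\nu^{-1}x^{a+1-\nu}e^{-x^\nu}$ as $x\to\infty$, and then invert the second formula of \eqref{fg1} by the same iteration scheme; the bookkeeping of powers and constants is the most error-prone step, but poses no genuine analytic difficulty.
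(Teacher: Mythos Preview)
Your proposal is correct and follows exactly the route the paper intends: the paper states Corollary~\ref{mil} as a direct specialization of Theorem~\ref{th1.2} and Remark~\ref{remark2} without giving any proof, so your outline---checking $(\mathbf A_1)$--$(\mathbf A_3)$, determining $L^1$-integrability via $F(r)$, verifying which of \eqref{doii}/\eqref{doi} holds, and then substituting into \eqref{sqa} or \eqref{fg1}---is precisely what is implicitly required. Your computations (the cancellation in Example~3 giving $j=-\alpha q/(p-1)$, the integrability threshold $\alpha q_*/(p-1)>\beta+\gamma+1$ in Example~1, the role of $\nu<1/2$ in making the error $(\ln(1/r))^{2\nu-1}$ vanish) are all accurate, and in fact your write-up is considerably more detailed than anything the paper provides for this corollary.
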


Our next application illustrates how {\em weighted} divergence-form equations such as 
\eqref{une1} arise naturally in the study of 
$p$-Laplacian type equations in \textit{exterior} domains.  

\begin{corollary}
Assuming $2\leq  N\le p<a$ and $q>p-1$, we consider the problem
\neweq{exv}
{\rm div} \left(|\nabla v(\tilde x)|^{p-2} \nabla v(\tilde x)\right) = |\tilde x|^{-a} [v(\tilde x)]^q \quad \text{in } \RR^N \setminus \overline{B_1}.
\endeq
By a modified Kelvin transform where $u(x)=v(\tilde x)$ with 
$x = \tilde x/|\tilde x|^2$ (see \cite[Appendix A]{fp}), 
the behaviour near $\infty$ of the positive solutions of \eqref{exv} 
can be obtained from the behaviour near $0$ of the positive 
solutions of \eqref{une1} 
with $\A(x)=|x|^{2(p-N)}$, $b(x)=|x|^{a-2N}$ and $h(u)=[u(x)]^q$.
Hence, by applying our Theorem~\ref{th1.2}, we find that:
\begin{enumerate}
\item[{\rm (1)}] If $p>N$, then the following classification holds for the positive solutions $v(\tilde x)$ of \eqref{exv}:
 \begin{enumerate}
\item[{\rm (a)}] If $q<\frac{(a-N)(p-1)}{p-N}$, then as $|\tilde x|\to \infty$, exactly one of the following holds 
\begin{enumerate}
\item[{\rm (i)}] $v(\tilde x)$ converges to a positive number;  
\item[{\rm (ii)}] ${|\tilde x|^{-\frac{p-N}{p-1}}}{v(\tilde x)}$ converges to a positive number;
\item[{\rm (iii)}] $|\tilde x|^{-(a-p)/(q-p+1)}v(\tilde x)\to \left[\left(\frac{a-p}{q-p+1}\right)^{p-1}
\left(\frac{-pq+ap-a}{q-p+1}-N\right)\right]^{1/(q-p+1)}$.  
\end{enumerate}
\item[{\rm (b)}] If, in turn, $q\geq \frac{(a-N)(p-1)}{p-N}$, then for every positive solution of \eqref{exv}, only (i) holds.
\end{enumerate}
\item[{\rm (2)}] If $p=N$, then for all $q>p-1$, only (1)(a) holds in which (ii) should read as 
$\lim_{|\tilde x|\to \infty}v(\tilde x)/\ln(|\tilde x|)\in (0,\infty)$.
\end{enumerate}
\end{corollary}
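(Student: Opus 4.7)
The plan is to reduce the exterior problem \eqref{exv} to an interior one of the form \eqref{une1} via the Kelvin-type inversion $x=\tilde x/|\tilde x|^2$ (so that $|\tilde x|=1/|x|$), as in \cite[Appendix A]{fp}, and then read off the classification from Theorem~\ref{th1.2}. Setting $u(x)=v(\tilde x)$, I would first verify that $u$ satisfies the weak form of \eqref{une1} on $B_1\setminus\{0\}$ with $\A(|x|)=|x|^{2(p-N)}$, $b(x)=|x|^{a-2N}$, $h(u)=u^q$. The key ingredients are: the Jacobian of the inversion equals $|x|^{-2}P$, where $P=I-2xx^T/|x|^2$ is an orthogonal involution, which yields $|\nabla_{\tilde x}v|=|x|^2|\nabla_x u|$ and $\nabla_{\tilde x}v\cdot\nabla_{\tilde x}\phi=|x|^4\,\nabla_x u\cdot\nabla_x\psi$ for test functions related via $\phi(\tilde x)=\psi(x)$; and the volume element satisfies $d\tilde x=|x|^{-2N}dx$. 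Assembling these, the $p$-factor from the gradient, the quartic factor from the inner product, and the Jacobian combine to produce the weight $|x|^{2(p-2)+4-2N}=|x|^{2(p-N)}$, while the right-hand side collapses to $|x|^{a-2N}u^q$. Hence $\vartheta=2(p-N)$, $\sigma=a-2N$, $L_\A\equiv L_b\equiv L_h\equiv 1$, and the hypotheses $N\le p<a$, $q>p-1$ translate exactly into Assumptions $({\mathbf A_1})$--$({\mathbf A_3})$.

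In Case (1), $p>N$, one has the strict inequality $p<N+\vartheta$, so Theorem~\ref{th1.2} applies directly. A short computation gives the critical exponent $q_*=(N+\sigma)/m_2=(a-N)(p-1)/(p-N)$. For $q\ge q_*$, Theorem~\ref{th1.2}(b) gives that $u$ extends continuously and positively at $0$, which under $|\tilde x|=1/|x|$ reads $v(\tilde x)\to L\in(0,\infty)$, establishing (1)(b). For $q<q_*$, I would apply the trichotomy of Theorem~\ref{th1.2}(a). Alternative (i) pulls back directly. For (ii), plugging $L_\A\equiv 1$ into \eqref{2.1} and integrating explicitly yields $\Phi(r)\sim C\,r^{-(p-N)/(p-1)}$ as $r\to 0^+$, so $u(x)/\Phi(|x|)\to\lambda\in(0,\infty)$ becomes $|\tilde x|^{-(p-N)/(p-1)}v(\tilde x)\to$ positive number. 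For (iii), substituting the constant slowly varying functions into \eqref{stro} reduces both sides to elementary power integrals; solving for $\tilde u(r)$ yields $\tilde u(r)\sim \gamma\,r^{-(a-p)/(q-p+1)}$ with $\gamma$ the stated strong-singularity constant, producing (1)(a)(iii) after reverting to $v$.

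In Case (2), $p=N$, one has $\vartheta=0$, so $p=N+\vartheta$; since $L_\A\equiv 1$ is bounded at $0$, Remark~\ref{equal} applies and the trichotomy (a) of Theorem~\ref{th1.2} holds for every $q>p-1$ (with $q_*=\infty$). The only change is in alternative (ii): since $m_2=0$, explicit integration of \eqref{2.1} gives $\Phi(r)=C_{N,N}\ln(1/r)$, so the weak-singularity condition translates to $v(\tilde x)/\ln|\tilde x|\to$ positive number; alternative (iii) retains the form of (1)(a)(iii) upon the same substitution. I expect the only non-routine step to be the Kelvin-transform identity at the start: matching the factors $|x|^{2(p-2)}$, $|x|^4$, and $|x|^{-2N}$ to produce the weight $\A(|x|)=|x|^{2(p-N)}$ reflects the conformal character of the inversion together with the $p$-homogeneity of the operator, and demands careful bookkeeping with the reflection $P$. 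Everything that follows is mechanical substitution into Theorem~\ref{th1.2} and Remark~\ref{equal}, combined with the elementary asymptotics of $\Phi$ and $\tilde u$.
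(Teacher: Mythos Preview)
Your proposal is correct and follows exactly the approach the paper itself indicates: the corollary is stated with its justification embedded (``By a modified Kelvin transform\ldots\ Hence, by applying our Theorem~\ref{th1.2}\ldots''), and no separate proof is given. You have simply filled in the routine computations the paper leaves implicit---the Jacobian bookkeeping for the inversion, the identification $\vartheta=2(p-N)$, $\sigma=a-2N$, the computation $q_*=(a-N)(p-1)/(p-N)$, the asymptotics of $\Phi$, and the invocation of Remark~\ref{equal} for the borderline case $p=N$---and each step checks out.
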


\section{Proof of Theorem~\ref{th1.2}(a): Classification of singularities} \label{sec2}
Let $u$ be any positive solution of \eqref{une1}.
Before proving Theorem~\ref{th1.2}(a), we state some preliminary results 
to be established later in Section~\ref{basic}, under 
Assumptions~$({\mathbf A_1})$--$({\mathbf A_3})$.   
Fix $r_0\in (0,1/2)$. Then the following holds: 

$\bullet$ An {\em a priori} estimate (see Lemma~\ref{apr}): 
There exists a positive constant $C$, depending on $r_0$ such that
\neweq{bst} 
\frac{|x|^p b(x)}{\A(|x|)} \frac{h(u(x))}{[u(x)]^{p-1}}\leq C \quad \text{for every } 0<|x|\leq r_0.
\endeq

$\bullet$ A Harnack-type inequality (see Lemma~\ref{lemharnack}): 
There exists a constant $K>0$ (depending on $p$, $N$ and $r_0$)
such that 
\begin{equation} \label{eqharnack}
\max_{|x|=r} u(x)\leq K \min_{|x|=r}u(x) \quad \text{for all } 0<r\le r_0/2.
\end{equation}

Using \eqref{eqharnack} and the same argument as in \cite[Corollary~4]{BCCT} and \cite[Corollary 4.5]{florica}, the following can be shown:    

\neweq{mac} \left\{ 
\begin{aligned}
& 
\text{If } \limsup_{|x|\rightarrow 0} \frac{u(x) }{ [\Phi (x)]^j}=\infty,\ \text{then } \lim_{|x|\rightarrow 0} \frac{u(x)}{[\Phi(x)]^j}=\infty \quad \text{for } j\in \{0,1\}.\\
& 
\text{If } \liminf_{|x|\to 0} \frac{u(x)}{\Phi(x)}=0,\ \text{then } \lim_{|x|\to 0} \frac{u(x)}{\Phi(x)}=0.
 \end{aligned} \right. 
\endeq

Consequently, we either have $\limsup_{|x|\to 0} u(x)<\infty$ or $\lim_{|x|\to 0} u(x)=\infty$. In the latter case, the {\em a priori} estimate in
\eqref{bst}, together with Assumptions  $({\mathbf A_1})$ and $({\mathbf A_3})$, give that 
\neweq{jam} 
\limsup_{|x|\to 0} \frac{L_b(|x|)}{L_\A(|x|)}\,|x|^{p+\sigma-\vartheta} \,\left[u(x)\right]^{q-p+1} L_h(u(x))<\infty. 
\endeq   
In particular, \eqref{jam} yields that $\limsup_{|x|\to 0}u(x)/T(|x|)<\infty$ 
for some function $T$ regularly varying at $0$ with index $-m_0$. 
Since $\lim_{r\to 0^+} \ln T(r)/\ln (1/r)=m_0$, we find that $\limsup_{|x|\to 0}\ln u(x)/\ln (1/|x|)\leq m_0$. 

\begin{remark} \label{ghj} If $q=q_*$, then $m_0=m_2$ and any positive solution $u$ of \eqref{une1} with a strong singularity at zero satisfies
\neweq{aop} 
\lim_{|x|\to 0} \frac{\ln u(x)}{\ln \left(1/|x|\right)}=m_0. 
\endeq
\end{remark}

$\bullet$ A regularity result (see Lemma~\ref{reg}). 

$\bullet$ If 
$\lim_{|x|\to 0}u(x)/\Phi(x)=0$, then 
$u$ can be extended as a continuous positive solution of \eqref{une1} in $B_1$ (see Lemma~\ref{remcon}).  

\vspace{0.4cm}
{\bf Proof of Theorem~\ref{th1.2}(a).}
Let $b(x)\, h(\Phi)\in L^1(B_{1/2})$ and $u$ be a positive solution of \eqref{une1}.  Let $\lambda:=\limsup_{|x|\to 0} \frac{u(x)}{\Phi(|x|)}$.
Then the categories (i)--(iii) of Theorem~\ref{th1.2}(a) correspond respectively to:
\begin{enumerate}
\item[{\rm (i)}] $\lambda=0$. Then the assertion of (i) in Theorem~\ref{th1.2}(a) follows from Lemma~\ref{remcon}.
\item[{\rm (ii)}] $\lambda\in (0,\infty)$. One can show that $u$ has a weak singularity at $0$ and can verify \eqref{b11} by using the same argument as in \cite[Theorem~5.1]{cd} (see
also \cite[Proposition~6]{BCCT}). We thus omit the details. 
\item[{\rm (iii)}] $\lambda=\infty$. Then \eqref{mac} yields that 
$\lim_{|x|\to 0} u(x)/\Phi(x)=\infty$. We show below how to reduce the proof of (iii) in 
Theorem~\ref{th1.2}(a) to the case of strong singularities for radial
 solutions of an approximate problem \eqref{rd1} treated in Theorem~\ref{drum}. 
We reason as in \cite[Lemma~4.12]{florica}, using Lemmas~\ref{apr} and~\ref{reg} to deduce that 
for every $\varepsilon\in (0,1)$, there exists $r_\varepsilon\in (0,1)$ and 
a function $v_\varepsilon$ satisfying $ \left(1-\varepsilon\right) u\leq v_\varepsilon \leq \left(1+\varepsilon\right) u$ 
in $B_{r_\varepsilon}^*$ with $v_\varepsilon$ a positive solution of 
\begin{equation} \label{rd1}
-{\rm div}\,(\mathcal A(|x|)\, |\nabla v|^{p-2} \nabla v)+|x|^\sigma \, v^q L_b(|x|)\,L_h(v)=0\quad \text{in } B_{r_\varepsilon}^*:=B_{r_\varepsilon}\setminus\{0\}. 
\end{equation}

Moreover, if $v$ is any positive solution of \eqref{rd1}, then as in \cite[Lemma~4]{BCCT}, we can obtain 
two positive {\em radial} solutions of \eqref{rd1} in $B_{r_\varepsilon/2}^*$, say $v_*$ and $v^*$, such that for a sufficiently large constant $K>1$, we have 
\begin{equation} \label{rd2} 
K^{-1} v\leq v_*\leq v\leq v^* \leq K v\quad\text{in } B_{r_\varepsilon/2}^*. 
\end{equation}
We observe that 
any positive radial solution of \eqref{rd1} in 
$B^*$ satisfies 
\begin{equation} \label{radio1}
\frac{d}{dr}\left(r^{N-1+\vartheta} L_{\A}(r) |v'(r)|^{p-2} v'(r) \right)=r^{N-1+\sigma} L_b(r) \,L_h(v(r))\, v^q(r)\quad \text{for } r=|x|\in (0,1).
\end{equation}

In view of \eqref{rd2}, to conclude the assertion of (iii) in Theorem~\ref{th1.2}(a),  
it is enough to prove Theorem~\ref{drum} below.
\end{enumerate}

\begin{theorem} \label{drum}
Let Assumptions $({\mathbf A_1})$--$({\mathbf A_3})$ hold. Suppose that $b(x)\, h(\Phi)\in L^1(B_{1/2})$. Let $v$ be any positive solution of 
\eqref{radio1} with a strong singularity at $0$. 
\begin{enumerate}
\item[{\rm (a)}] If $q<q_*$, then $v(r)\sim \tilde u(r)$ as $r\to 0$, where $\tilde u$ is given by \eqref{stro}.
\item[{\rm (b)}] If $q=q_*$, then assuming either \eqref{doii} or \eqref{doi}, we have 
$v(r)\sim \tilde u(r)$ as $r\to 0$, where $\tilde u$ is given by \eqref{fg1}. 
\end{enumerate}
\end{theorem}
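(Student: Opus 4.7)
My plan is to work with the monotone ``flux'' quantity
\begin{equation*}
E(r):=r^{N-1+\vartheta}L_\A(r)\,|v'(r)|^{p-1},
\end{equation*}
which is positive for all sufficiently small $r$: one argues, using Lemma~\ref{lemharnack} and the strong singularity, that $v$ is eventually strictly decreasing near $0$. Since $v'<0$, the radial equation \eqref{radio1} becomes $-E'(r)=r^{N-1+\sigma}L_b(r)\,[v(r)]^q L_h(v(r))$, so $E$ is monotonically decreasing in $r$ and integration over $(r,R_0)$ produces the basic integral identity
\begin{equation*}
E(r)=E(R_0)+\int_r^{R_0} t^{N-1+\sigma}L_b(t)\,[v(t)]^q L_h(v(t))\,dt.
\end{equation*}
The common strategy in both regimes is to feed the a priori bound \eqref{jam} into this identity, evaluate the integral asymptotically through Karamata's theorem, convert the resulting information into a separable ODE for $v$, and close the argument with a sub-/super-solution comparison on shrinking annuli (the weak comparison principle for the quasi-linear operator being available thanks to the essentially monotone nature of $t\mapsto h(t)/t^{p-1}$ following from $({\mathbf A_2})$ and \eqref{aoz}).

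For part~(a), $q<q_*$: a scaling check shows that the integrand is regularly varying at $0$ of index $N-1+\sigma-m_0 q<-1$, so $E(r)\to\infty$ as $r\to 0^+$ and Karamata's theorem evaluates both sides of the identity to give the pointwise asymptotic balance
\begin{equation*}
\bigl(-v'(r)\bigr)^p \sim M\,r^{\sigma-\vartheta}\,\frac{L_b(r)}{L_\A(r)}\,v(r)^{q+1}\,L_h(v(r)),
\end{equation*}
with $M$ as in \eqref{stro}. Taking $p$-th roots, separating variables, and integrating from $0$ to $r$ (both integrals converging thanks to $q>p-1$ and $\sigma>\vartheta-p$) reproduces exactly \eqref{stro}. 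The perturbation scheme of \cite{cd} then constructs $(1\pm\varepsilon)\tilde u$ as a super- respectively sub-solution of \eqref{rd1} on a small annulus, and the weak comparison principle upgrades these one-sided bounds to the sharp asymptotic $v\sim\tilde u$.

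The critical case $q=q_*$ is the crux. Now the integrand in the integral identity is regularly varying of index exactly $-1$, the integral is localised at no particular scale, and the perturbation technique of \cite{cd} breaks down. The new idea is to pull out the expected ``$\Phi$-like'' dominant part of $v$ by writing
\begin{equation*}
v(r)=r^{-m_0}\,[L_\A(r)]^{-1/(p-1)}\,\psi(r),
\end{equation*}
and establishing that $\psi$ is slowly varying at $0$ with $\psi(r)\to\infty$; this is extracted from \eqref{jam}, \eqref{aop} and the Karamata representation, and entails $\psi'(r)/\psi(r)=o(1/r)$. Since $m_0=m_2$ in the critical case, the cancellation $N+\vartheta-p-(p-1)m_0=0$ yields $E(r)\sim m_0^{p-1}\psi(r)^{p-1}$, while hypothesis \eqref{doii} forces $L_h(v(r))\sim m_0^\gamma L_h(1/r)$ so that the integrand of the integral identity reduces asymptotically to $m_0^\gamma\,\psi^{q_*}(t)\,F'(t)$ (with an analogous reduction under \eqref{doi}). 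Differentiating in $r$ and rewriting via the chain rule produces the separable relation
\begin{equation*}
\frac{d}{dr}\!\bigl[\psi(r)^{p-1-q_*}\bigr]\sim m_1\,m_0^{\gamma-p+1}\,F'(r);
\end{equation*}
as both $\psi^{p-1-q_*}(r)$ and $F(r)$ tend to $0$ as $r\to 0^+$, L'H\^opital's rule promotes this to $\psi(r)^{p-1-q_*}\sim m_1 m_0^{\gamma-p+1}F(r)$, which is exactly \eqref{fg1}. A concluding trapping argument with $(1\pm\varepsilon)\tilde u$, as in case~(a), yields $v\sim\tilde u$.

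The principal obstacle is part~(b). Two technical points demand care. First, verifying rigorously that $\psi$ is slowly varying -- so that its derivative may legitimately be dropped in the leading-order analysis -- requires a bootstrap between the integral identity and the Ansatz, starting from the crude logarithmic estimate \eqref{aop} and gradually sharpening to $\psi'/\psi=o(1/r)$. Second, the exact constant $m_1 m_0^{\gamma+1-p}$ in \eqref{fg1} depends sensitively on the structural refinements \eqref{doii} or \eqref{doi}: these are precisely what makes the slowly varying ingredients of the integrand align coherently with $F'(r)$ so that the separable ODE above appears with the correct coefficient.
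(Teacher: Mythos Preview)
Your outline contains genuine gaps in both parts, and the mechanism that actually drives the paper's proof is absent from your sketch.

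\textbf{Part (a).} Your Karamata step is circular. You assert that the integrand $t^{N-1+\sigma}L_b(t)[v(t)]^qL_h(v(t))$ is regularly varying of index $N-1+\sigma-m_0q$; but this presupposes that $v$ itself is regularly varying of index $-m_0$, which is exactly the conclusion. The {\em a priori} bound \eqref{jam} only gives an upper bound $v(r)\leq CT(r)$ with $T\in RV_{-m_0}(0+)$, which yields an upper bound on $E(r)$ but not the sharp asymptotic balance you write. The paper avoids this by never asserting regular variation of $v$ directly. Instead it constructs a one-parameter family $v_{\pm\eta}=C_{\pm\eta}[\tilde u]^{1\pm\eta}\in RV_{-(1\pm\eta)m_0}(0+)$ (see \eqref{case0}) and proves in Lemma~\ref{fact1} that any strong-singularity solution satisfies $v/v_\eta\to 0$ and $v/v_{-\eta}\to\infty$; this lemma relies on the convexity of $y(s)=v(\Phi^{-1}(s))$ through \eqref{liminf}. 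Only then does the comparison principle apply, with the $\eta$-shift in the index being precisely what enables the boundary condition at the origin. Your ``trapping with $(1\pm\varepsilon)\tilde u$'' fails as stated: since $\tilde u$ and $v$ have (a priori) the same scaling, you have no inequality at $r=0$ to feed into the comparison.

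\textbf{Part (b).} Your Ansatz $v=r^{-m_0}[L_\A]^{-1/(p-1)}\psi$ is the right intuition, and your formal computation of the separable ODE for $\psi$ is correct and reproduces \eqref{fg1}. But the entire argument hinges on knowing that $\psi$ is slowly varying (equivalently, $r\psi'/\psi\to 0$), and you acknowledge but do not close this gap. The bound \eqref{aop} only says $\ln v/\ln(1/r)\to m_0$, which is much weaker. The paper proceeds differently: passing to $y(s)=v(r)$ with $s=\Phi(r)$, the convexity $y''\geq 0$ forces $y'$ increasing; combined with \eqref{jam} and \eqref{loc1} this yields the two-sided bound $1/2\leq sy'(s)/y(s)\leq 2\ell$ of \eqref{lo1}. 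This is the rigorous substitute for ``$\psi$ slowly varying'' and is what makes \eqref{tynn} and \eqref{newg} valid. Twice integrating then gives only the two-sided estimate \eqref{aur10}, not yet the sharp asymptotic; the paper then needs a further $\eta$-perturbed family $v_{\pm\eta}$, defined differently in \eqref{case1}, together with \eqref{fr1} to run the comparison. Your proposed bootstrap to obtain $\psi'/\psi=o(1/r)$ directly from the integral identity is not carried out, and it is not clear it can be without using the convexity in the $s$-variable.
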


A major advance in this paper compared with C\^irstea and Du \cite{cd} (where $\A=1$) is 
the analysis of the {\em critical case} and the derivation of the asymptotic behaviour of 
the strong singularities. Our contribution here is the development of a perturbation technique suitable for the {\em critical case} $q=q_*$. 
Unlike the subcritical case, where the power model corresponding to $\A=b=1$ and $h(t)=|t|^{q-1} t$ was 
completely understood due to Friedman and V\'eron \cite{fav} (see also Remark~\ref{history}), 
in the critical case we had no model in the literature to provide us with intuition on the asymptotics of strong singularity solutions. As we reveal in our paper, 
the critical case is important in the {\em non-power nonlinearity} case as it represents the threshold between having {\em a trichotomy} classification (as in Theorem~\ref{th1.2}(a))
or {\em no singularities at all} as in Theorem~\ref{th1.2}(b), all depending on whether or not $b(x)\,h(\Phi)$ belongs to $L^1(B_{1/2})$.

The proofs of Theorem~\ref{drum}(a) and (b) are intricate, each being composed of three main steps. 
First, we shall prove here the critical case $q=q_*<\infty$, while also pointing out the major differences between the subcritical and critical cases. 
Under the assumptions of Theorem~\ref{drum}, let $v$ be any positive solution of \eqref{radio1} with a strong singularity at $0$. 
A change of variable $y(s)=v(r)$ with $s=\Phi(r)$ moves the singularity from $r=0$ to $s=\infty$ for the equation
\begin{equation} \label{rado2}
(p-1)\left|y'(s) \right|^{p-2} y''(s)=C_{N,p}^{-p+1}\, r^{N-1+\sigma} L_b(r)\,
L_h(y(s))\left[y(s)\right]^q  \left|\frac{dr}{ds}\right| \ \ \text{for } s\in (0,\infty),
\end{equation}
where, for simplicity, we denote $y'(s)=dy/ds$ and $y''(s)=d^2 y/ds^2$.

\vspace{0.2cm}
{\bf Step~1.} {\em Fix $\eta_0>0$ small. For every $\varepsilon\in (0,1)$ small, there exists $r_\varepsilon\in (0,1)$ such that $\left(1-\varepsilon\right) v_{-\eta}$ and $\left(1+\varepsilon
\right) v_\eta$ is a sub-solution and
super-solution of \eqref{radio1} for $0<r<r_\varepsilon$, respectively, for every $\eta\in [0,\eta_0]$. Moreover, it holds that $\lim_{\eta\to 0^+} v_{\pm \eta}(r)=
\tilde u(r)$ for every $r\in (0,r_\varepsilon]$, where $\tilde u$ is as in Theorem~\ref{drum}.}

\vspace{0.2cm}
The local one-parameter family $v_{\pm \eta}$ of sub- and super-solutions of \eqref{radio1} is constructed such that 
$ v_{\pm \eta}(r)$ converges to $\tilde u(r)$ as $\eta$ approaches $ 0^+ $. The function $\tilde u$ in Theorem~\ref{drum} is regularly varying at $0$ with index $-m_0$, where $m_0$ and $m_2$ are given by \eqref{stri}. 
The definition of $v_{\pm \eta}$ 
in the subcritical case is different from
that of the critical case as follows.

In the subcritical case $q<q_*$, we define $v_{\pm\eta}$ in \eqref{case0} as a regularly 
varying function at $0$ with index $-\left(1\pm \eta\right) m_0$ (here $m_0>m_2$). We shall check the assertion of Step~1 in \S\ref{sssub}.  

In the critical case $q=q_*<\infty$, we have $m_0=m_2$, that is, $\tilde u$ has the same index of regular variation at $0$ as the fundamental solution
$\Phi$ in \eqref{2.1}, namely $-m_2$. In this case, $v_{\pm\eta}$ is defined by \eqref{case1} 
as a regularly varying function at $0$ with index $-m_2$. We shall verify Step~1 in \S\ref{sscrit} with the change of variable $y_{\pm\eta}(s)=v_{\pm \eta}(r)$ where $s=\Phi(r)$.  Notice that when either \eqref{doii} holds or \eqref{doi} holds, by 
the definitions of $\tilde u $ in \eqref{fg1} and $v_{\pm \eta}$ in \eqref{case1}, we infer that 
\neweq{fr1} \lim_{r\to 0^+} \frac{\tilde u(r)}{v_\eta(r)}=0\ \ \text{and} \ \ 
 \lim_{r\to 0^+} \frac{\tilde u(r)}{v_{-\eta}(r)}= \infty\ \ \text{for every } \eta\in [0,\eta_0].  
\endeq

 \vspace{0.2cm}
 {\bf Step 2.} {\em The functions } $v_\eta$ and $v_{-\eta}$ {\em constructed in Step~1 satisfy the following property:} 
 \neweq{mot}
 \lim_{r\rightarrow 0^+} \frac{v(r)}{v_{\eta}(r)}=0 \quad \text{and} \quad \lim_{r\rightarrow 0^+} \frac{v(r)}{v_{-\eta}(r)}= \infty.
\endeq
 
 In both the subcritical and critical cases, since $v$ has a strong singularity at $0$, that is 
$v(r)/\Phi(r)\to \infty$ as $r\to 0^+$, then we have 
$y(s)/s\to \infty$ as $s\to \infty$. 
Using that $y''(s)\geq 0$, we find that 
$y'(s)$ is increasing so that $\lim_{s\to \infty} y'(s)=\infty$. As the function $s\longmapsto s y'(s)-y(s)$ is increasing on $(0,\infty)$ and $\lim_{s\to \infty} y(s)=\infty$, we see that 
\begin{equation} \label{liminf} 
\liminf_{s\to \infty} \frac{s y'(s)}{y(s)}\geq 1.
\end{equation}

In the {\em subcritical} case, we shall use \eqref{liminf} in Lemma~\ref{fact1}(b) of \S\ref{sssub} to improve the behaviour of the solution
$v$ of \eqref{radio1} from dominating near zero the fundamental solution $\Phi$ (of index $-m_2$) to dominating
{\em any} function $f$ regularly varying at zero with index $-\kappa$, where $m_2<\kappa<m_0$. 
We deduce \eqref{mot} by using Lemma~\ref{fact1} with $f=v_{\pm \eta}$ since the index of regular variation at $0$ for the function $v_{\eta}$ (respectively, $v_{-\eta}$) is smaller (respectively, bigger) than $-m_0$. We point out that Lemma~\ref{fact1} relies essentially on the assumption that $q<q_*$ and cannot be adapted to the critical case. 
 
Hence, in the {\em critical} case, we need a new argument that takes into account that $v_{\pm \eta}$ varies regularly at $0$ with the same index as $\tilde u$. We now prove Step~2 in the critical case.

\vspace{0.2cm}
{\em Proof of Step~2 for the critical case $q=q_*$.} 
 
\vspace{0.2cm}
The main ingredient in the proof of \eqref{mot} is given by the following 
 \neweq{aur10} 0<\liminf_{r\to 0^+} \frac{v(r)}{\tilde u(r)}\leq \limsup_{r\to 0^+} \frac{v(r)}{\tilde u(r)}<\infty. \endeq

By combining \eqref{fr1} and \eqref{aur10}, we conclude \eqref{mot} in the critical case. 
 
 \vspace{0.2cm} 
{\em Proof of \eqref{aur10}.}  
Using \eqref{jam} and \eqref{liminf}, we infer that $\limsup_{s\to \infty} sy''(s)/y'(s)<\infty$. Indeed,   
by \eqref{rado2}, we have
\begin{equation} \label{loc1}
 \frac{sy''(s)}{y'(s)} = \frac{1}{p-1} \left[\frac{y(s)}{sy'(s)}\right]^{p-1}[\Upsilon(r)]^{-p}\,
 \frac{L_b(r)}{L_{\A}(r)}\, r^{p+\sigma-\vartheta} 
L_h(y(s)) \left[y(s)\right]^{q_*-p+1},  
\end{equation} where $\Upsilon$ is given by \eqref{kphi}. 
For $s_0>0$, there exists a large constant $C>0$ so that $s\mapsto s y'(s)-Cy(s)$ is non-increasing for all $s>s_0$. 
It follows that $\ell=\limsup_{s\to \infty} s y'(s) / y(s) <\infty$.  
From \eqref{liminf}, we can take $s_0>0$ large such that 
\begin{equation} \label{lo1} 
\frac{1}{2}\leq \frac{s \,y'(s)}{y(s)}\leq 2\ell \quad \text{for all } s\geq s_0.
\end{equation} 
In view of Remark~\ref{ghj}, we find that $\ln y(s)\sim \ln s$ as $s\to \infty$. Consequently, as $s\to \infty$,
we obtain that 
\neweq{tynn} 
\left\{ \begin{aligned}
& m_0^\gamma L_h(1/r) \sim L_h(s)\sim L_h(y(s)) && \text{if } \eqref{doii} \ \text{holds};&\\
& (m_0)^{-j} \left[L_{\A}(1/y(s))\right]^{-\frac{q_*}{p-1}} L_b(1/y(s))\sim 
\left[L_{\mathcal A} (\Phi^{-1}(s))\right]^{-\frac{q_*}{p-1}} L_b(\Phi^{-1}(s))   
&& \text{if } \eqref{doi} \ \text{holds}.& 
\end{aligned} \right. \endeq
For all $s\geq s_0$, by using \eqref{lo1} and \eqref{tynn} in \eqref{rado2}, we find positive constants $c_1$ and $c_2$ so that 
\neweq{newg} 
\left\{ \begin{aligned}
 c_1 r^{N-1+\sigma} L_b(r) \, h(\Phi(r)) & \leq \left[y'(s)\right]^{-q_*+p-2}
y''(s) \left| \frac{ds}{dr}\right| \leq c_2  r^{N-1+\sigma} L_b(r)\, h(\Phi(r)) && \text{if } \eqref{doii} \ \text{holds};&\\
  c_1 \frac{d}{ds} [F(1/y(s))] & \leq \left[y'(s)\right]^{-q_*+p-2}
y''(s) \leq c_2  \frac{d}{ds} [F(1/y(s))] && \text{if } \eqref{doi} \ \text{holds},& 
\end{aligned} \right. \endeq
where $F$ is defined by \eqref{fodef}.

\vspace{0.2cm}
{\bf Case 1:} {\em Assume that \eqref{doii} holds.} 

\vspace{0.2cm}

Since $y'(s)\to \infty$ as $s\to \infty$, by integrating \eqref{newg}, we obtain that
\begin{equation} \label{bob} 
c_3 F(\Phi^{-1}(s))\leq 
\left[ y'(s)\right]^{-q_*+p-1} \leq c_4 F(\Phi^{-1}(s)) \quad \text{for all } s> s_0,
\end{equation} 
where $c_3$ and $c_4$ are positive constants.
Using \eqref{lo1} in \eqref{bob}, then reversing the change of variable $y(s)=v(r)$ with $s=\Phi(r)$, we infer that there exist positive constants $c_5$ and $c_6$ such that 
\begin{equation}
 c_5 \left[F(r) \right]^{-\frac{1}{q_*-p+1}} \Phi(r) \leq v(r)
\leq c_6 \left[ F(r) \right]^{-\frac{1}{q_*-p+1}}  \Phi(r) \quad \text{for all } r\in (0,\Phi^{-1}(s_0)).
\end{equation}
Hence, using \eqref{kphi} and the definition of $\tilde u$ in \eqref{fg1}, we conclude Step~2 in Case 1. 

\begin{remark} \label{remark6}
Notice that when \eqref{doii} holds, the existence of a solution  $v$ of \eqref{radio1} with a strong singularity at zero implies that $b(x)\,h(\Phi(|x|)) \in L^1(B_{1/2})$. Indeed, fixing $r_0 \in (0, \Phi^{-1}(s_0))$, then for every $\varepsilon \in (0,r_0)$, by integrating the first inequality in \eqref{newg} with respect to $r$ from $\varepsilon$ to $r_0$, and letting $\varepsilon \to 0$, we conclude the claim (using Remark~\ref{remark2}). A more general statement is proven later in Lemma~\ref{radsy}.
\end{remark}

{\bf Case 2:} {\em Assume that \eqref{doi} holds.}  

\vspace{0.2cm}

By twice integrating \eqref{newg}, we find positive constants $c_3$ and $c_4$ such that 
$$ c_3 \leq 
\frac{d}{ds} \left( \int_{y(s_0)}^{y(s)} \left[F(1/t)\right]^{\frac{1}{q_*-p+1}} dt\right) \leq
 c_4 \quad \text{for every } s>s_0.
$$
We thus conclude that 
$$ 0<\liminf_{s\to \infty} \frac{ \int_{y(s_0)}^{y(s)} \left[F(1/t)\right]^{\frac{1}{q_*-p+1}} dt }{s} \leq
\limsup_{s\to \infty} \frac{ \int_{y(s_0)}^{y(s)} \left[F(1/t)\right]^{\frac{1}{q_*-p+1}} dt }{s}<\infty.
$$ 
This, jointly with \eqref{kphi} and the definition of $\tilde u$ in \eqref{fg1}, proves the assertion of Step~2 in Case~2.

  \vspace{0.2cm}
  {\bf Step~3.} {\em Proof of Theorem~\ref{drum} concluded.}
  
   \vspace{0.2cm}
  {\em Proof of Step~3.} The reasoning is the same for the subcritical and critical case. It is based on the previous two steps and the following comparison principle
   to be used frequently in the paper.

\begin{lemma}[Comparison principle, see Theorem 2.4.1 in \cite{PS}] \label{cp}
Let $\Omega$ be a bounded domain in $\RR^N$ with $N\geq 2$. 
Let $u,v\in C^1(\Omega)$ satisfy (in the sense of distributions in $\mathcal D'(\Omega)$) the pair of differential inequalities 
$$
- {\rm div}\, \boldsymbol{A} (x, \nabla u) + B(x,u) \leq 0 \quad \text{and} \quad
- {\rm div}\,  \boldsymbol{A} (x, \nabla v) + B(x,v) \geq 0\quad \text{in } \Omega.
$$
Suppose that $ \boldsymbol{A} : \Omega \times \RR^N \rightarrow \RR^N$ is in $L_{\rm loc}^\infty (\Omega \times \RR^N)$ and $B: \Omega \times \RR \rightarrow \RR$ is in $L_{\rm loc}^\infty (\Omega \times \RR)$ such that $B=B(x,z)$ is independent of $\boldsymbol{\xi}$ and non-decreasing in $z$, whereas
$ \boldsymbol{A}= \boldsymbol{A}(x,\boldsymbol{\xi})$ 
is independent of $z$ and 
monotone in $ \boldsymbol{\xi}$, that is
$$ \langle  \boldsymbol{A}(x,\boldsymbol{\xi})-  \boldsymbol{A}(x,\boldsymbol{\eta}),\boldsymbol{\xi}-\boldsymbol{\eta}\rangle >0\quad \text{when } 
\boldsymbol{\xi} \not=\boldsymbol{\eta}.
$$

If $u \leq v$ on $\partial \Omega$, then $u \leq v$ in $\Omega$.
\end{lemma}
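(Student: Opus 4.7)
The plan is to use the familiar weak-comparison technique, testing the difference of the two distributional inequalities against $\phi := (u-v)^+$. First I would argue that $\phi$ is a legitimate test function: since $u,v \in C^1(\Omega)$ and $u \leq v$ on $\partial \Omega$, for every $\varepsilon > 0$ the truncation $(u-v-\varepsilon)^+$ has compact support in $\Omega$ and is therefore admissible in both distributional inequalities. Letting $\varepsilon \downarrow 0$ (using the $L^\infty_{\rm loc}$ bounds on $\mathbf{A}$ and $B$ together with dominated convergence) produces the inequality tested against $\phi$ itself.

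Subtracting the two distributional inequalities and testing with $\phi$, I obtain
\begin{equation*}
\int_\Omega \langle \mathbf{A}(x, \nabla u) - \mathbf{A}(x, \nabla v),\, \nabla \phi \rangle \, dx + \int_\Omega \bigl[B(x,u) - B(x,v)\bigr]\, \phi \, dx \leq 0.
\end{equation*}
On $\{u > v\}$, where $\phi > 0$, the monotonicity of $B$ in its second argument gives $B(x,u) - B(x,v) \geq 0$, so the second integral is non-negative. For the first integral, note that $\nabla \phi = \nabla(u-v)\,\chi_{\{u>v\}}$ almost everywhere, and the strict monotonicity assumption on $\mathbf{A}$ in $\boldsymbol{\xi}$ makes the integrand strictly positive wherever $\nabla u \neq \nabla v$ on $\{u > v\}$.

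Combining these two signs forces both integrals to vanish, so $\nabla u = \nabla v$ almost everywhere on $\{u > v\}$. Consequently, $u - v$ is locally constant on each connected component of the open set $\{u > v\}$. Since $u-v$ vanishes continuously along $\partial\{u > v\} \cap \Omega$ by definition of the superlevel set and is non-positive on $\partial\Omega$ by hypothesis, the constant must be zero, contradicting $u - v > 0$ in the interior. Hence $\{u > v\} = \emptyset$ and $u \leq v$ throughout $\Omega$.

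The main obstacle I anticipate is the careful justification of using $\phi = (u-v)^+$ as a test function in the distributional sense: the solutions are only $C^1$ and the coefficients $\mathbf{A}, B$ are merely $L^\infty_{\rm loc}$, so I cannot invoke Sobolev density arguments naively. The $\varepsilon$-truncation step, together with the hypothesis $u \leq v$ on $\partial \Omega$, ensures compact support in $\Omega$, and the $L^\infty_{\rm loc}$ bounds let me pass to the limit $\varepsilon \downarrow 0$ by dominated convergence. Once that analytic point is secured, the algebra of the two monotonicity hypotheses immediately closes the argument.
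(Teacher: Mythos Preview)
The paper does not supply a proof of this lemma; it simply cites Theorem~2.4.1 in Pucci--Serrin \cite{PS} and uses the statement as a black box. Your argument is the standard one (and essentially what appears in \cite{PS}): test the subtracted inequality against $(u-v-\varepsilon)^+$, use the two monotonicity hypotheses to force both integrals to be non-negative and hence zero, and conclude that $\nabla(u-v)=0$ on $\{u>v+\varepsilon\}$. One small point worth tightening: rather than passing to the limit $\varepsilon\downarrow 0$ in the test function and then running the constancy/boundary argument on $\{u>v\}$, it is cleaner to keep $\varepsilon>0$ fixed, deduce that $u-v$ is constant on each component of the open set $\{u>v+\varepsilon\}$, and use that this set is compactly contained in $\Omega$ (which is precisely how the boundary inequality $u\le v$ on $\partial\Omega$ is interpreted here) to reach a contradiction directly; then let $\varepsilon\downarrow 0$ at the very end. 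This avoids any ambiguity about whether $\partial\{u>v\}$ might coincide with $\partial\Omega$.
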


   Let $\varepsilon\in (0,1)$ be small and $r_\varepsilon\in (0,1)$ be as in Step~1. Fix $\eta\in [0,\eta_0]$ arbitrarily.  
Then, $\left(1+\varepsilon\right)v_\eta(r) + v(r_\varepsilon)$ and $v(r)+\tilde u(r_\varepsilon)$ are super-solutions of \eqref{radio1} for $r\in (0,r_\varepsilon)$. 
By \eqref{mot} and Lemma~\ref{cp}, we have
\neweq{yy} 
v(r)\leq \left(1+\varepsilon\right) v_\eta(r)+v(r_\varepsilon) \quad \text{and}\quad 
\left(1-\varepsilon\right) v_{-\eta}(r) \leq v(r)+\tilde u(r_\varepsilon) \quad \text{for all } 0<r\leq r_\varepsilon. 
\endeq 
Since $r_\varepsilon$ is independent of $\eta\in [0,\eta_0]$, by letting $\eta\to 0^+$ in \eqref{yy}, we find that
\neweq{yy1} 
v(r)\leq \left(1+\varepsilon\right) \tilde u(r)+v(r_\varepsilon) \quad \text{and} \quad
\left(1-\varepsilon\right) \tilde u(r)\leq v(r) +\tilde u(r_\varepsilon) \quad \text{for all } 0<r\leq r_\varepsilon. 
\endeq 
By letting $r\to 0^+$ in \eqref{yy1}, we deduce that 
\neweq{yy2}
1-\varepsilon\leq \liminf_{r\to 0^+} \frac{v(r)}{\tilde u(r)}\leq 
 \limsup_{r\to 0^+} \frac{v(r)}{\tilde u(r)}\leq 1+\varepsilon. 
\endeq 
Finally, by passing to the limit $\varepsilon\to 0^+$ in \eqref{yy2}, we conclude 
 that $v(r)\sim \tilde u(r)$ as $r\to 0^+$.

\subsection{Proof of Step~1 in the critical case $q=q_*$ of Theorem~\ref{drum}} \label{sscrit}

In this subsection, it remains only for us to 
establish the claim of Step~1 as outlined in the proof of Theorem~\ref{drum}. 
We first give the construction of a local family of sub- and super-solutions of \eqref{radio1}. 
Let $F$ be given by \eqref{fodef} and
$c>0$ be a large constant. Fix $\eta_0\in (0,1)$ small.  
Then for any $\eta\in [0,\eta_0]$, we define $v_{\pm \eta}(r)$ for $r>0$ small, as follows
\neweq{case1}
\left\{ \begin{aligned}
& v_{\pm\eta}(r):=C_{N,p}^{-1} \left(\frac{m_1 m_0^{\gamma-q}}{1\pm \eta}\right)^{-\frac{1}{q-p+1}}  \int_c^{\Phi(r)} 
\left[F(\Phi^{-1}(t))\right]^{-\frac{1\pm \eta}{q-p+1}} dt
&& \text{if }  \eqref{doii} \ \text{holds},\\
&  \int_c^{v_{\pm \eta}(r)} [F(1/t)]^{\frac{1\pm \eta}{q_*-p+1}} \,dt= C_{N,p}^{-1} \left(\frac{m_1m_0^{-q-1-j}}{1\pm\eta}\right)^{-\frac{1}{q-p+1}}\Phi(r) && \text{if } \eqref{doi} \ \text{holds}.
\end{aligned} \right. 
\endeq

We set $y_{\pm \eta}(s)=v_{\pm \eta}(r)$ with $s=\Phi(r)$. Using $y_{\pm \eta}'(s)$ and $y_{\pm \eta}''(s)$ to denote $dy_{\pm \eta}/ds$ and $d^2y_{\pm \eta}/ds^2$, respectively, then 
\neweq{sase}
(p-1)\left(y_{\pm \eta}'(s)\right)^{p-2} y_{\pm \eta}''(s)=
\frac{1}{m_1} \left( y_{\pm \eta}'(s)\right)^{q_*}\left|  \frac{d }{ds}  \left[ \left(y_{\pm \eta}'(s) \right)^{-q_*+p-1} \right]\right|.  \endeq

{\bf Step~1.} {\em For every $\varepsilon\in (0,1)$ small, there exists $s_\varepsilon>0$ large such that 
$\left(1-\varepsilon\right) y_{-\eta}$ and $\left(1+\varepsilon\right) y_\eta$ is a sub-solution and
super-solution of \eqref{rado2} for $s>s_\varepsilon$, respectively, for every $\eta\in [0,\eta_0]$.} 

\vspace{0.2cm}

From \eqref{case1}, we find that 
 \neweq{dify}
y_{\pm \eta}'(s)=
 \left\{ \begin{aligned}
 & C_{N,p}^{-1} \left(\frac{m_1 m_0^{\gamma-q_*}}{1\pm \eta}\right)^{-\frac{1}{q_*-p+1}}  \left[F(r)\right]^{-\frac{1\pm \eta}{q_*-p+1}}
 && \text{if }  \eqref{doii}\ \text{holds},&\\
 & C_{N,p}^{-1} \left(\frac{m_1m_0^{-q_*-1-j}}{1\pm\eta}\right)^{-\frac{1}{q_*-p+1}} \left[F
 (1/y_{\pm \eta}(s))\right]^{-\frac{1\pm \eta}{q_*-p+1}}
  && \text{if }  \eqref{doi}\ \text{holds}.&
 \end{aligned} \right.
\endeq
Moreover, we obtain 
the following asymptotic equivalence (uniform with respect to $\eta$)
\neweq{doiii} \ln y_{\pm \eta}(s) \sim \ln s\quad \text{and} \quad  s y_{\pm \eta}'(s) \sim y_{\pm \eta}(s)\ \  \text{as } s\to \infty. \endeq
From \eqref{doiii}, we deduce the following asymptotic equivalence as $s\to \infty$ (uniform with respect to $\eta$)
\neweq{tyn} 
\left\{ \begin{aligned}
& m_0^\gamma L_h(1/r) \sim L_h(s)\sim L_h(y_{\pm \eta}(s)) && \text{if } \eqref{doii} \ \text{holds};&\\
& (m_0)^{-j} \left[L_{\A}(1/y_{\pm \eta}(s))\right]^{-\frac{q_*}{p-1}} L_b(1/y_{\pm \eta}(s))\sim 
\left[L_{\mathcal A} (\Phi^{-1}(s))\right]^{-\frac{q_*}{p-1}} L_b(\Phi^{-1}(s))   
&& \text{if } \eqref{doi} \ \text{holds}.& 
\end{aligned} \right. \endeq
We introduce the notation $\mathcal K_{\pm \eta}(s):=\dfrac{\Upsilon(r)}{m_0} \,
 \dfrac{s\,y_{\pm \eta}'(s)}{y_{\pm \eta}(s)}$, where $\Upsilon$ is given by \eqref{kphi}. 
 We also denote $R_{\pm \eta}(s)$ as follows
\neweq{reta}
 R_{\pm \eta}(s)=\left\{ 
 \begin{aligned}
& \frac{m_0^\gamma \,L_h(1/r)}{L_h(y_{\pm \eta}(s))} \,\left[F(r)\right]^{\pm \eta} \left[\mathcal K_{\pm \eta}(s)\right]^{q_*}
&& \text{if } \eqref{doii} \ \text{holds},&\\ 
& m_0^{-j} \left[ \frac{L_\A (1/y_{\pm \eta}(s))}{L_\A (\Phi^{-1}(s))}\right]^{-\frac{q_*}{p-1}}\,
\frac{L_b (1/y_{\pm \eta}(s))}{L_b (\Phi^{-1}(s))} 
\left[F(1/y_{\pm \eta}(s))\right]^{\pm \eta} \left[\mathcal K_{\pm \eta}(s)\right]^{q_*+1}
&& \text{if } \eqref{doi} \ \text{holds}.&
\end{aligned} \right.
\endeq
Since $m_0=m_2$ for $q=q_*$, using \eqref{kphi} and \eqref{doiii}, we infer that $\lim_{s\to \infty}\mathcal K_{\pm \eta}(s)= 1$
 uniformly with respect to $\eta$. 
Hence, using \eqref{tyn}, we derive the following asymptotics as $s\to \infty$ 
 (uniform with respect to $\eta$)
\neweq{reta1}
 R_{\pm \eta}(s)\sim \left\{ 
 \begin{aligned}
& \left[F(r)\right]^{\pm \eta}
&& \text{if } \eqref{doii} \ \text{holds},&\\ 
& 
\left[F(1/y_{\pm \eta}(s))\right]^{\pm \eta} 
&& \text{if } \eqref{doi} \ \text{holds}.&
\end{aligned} \right.
\endeq
The right-hand side of \eqref{sase} equals the product between $R_{\pm \eta}(s)$ and the right-hand side of \eqref{rado2} for
$y=y_{\pm \eta}$. By the definition of $F$ in \eqref{fodef}, we have $ \lim_{r\to 0^+}F(r)=0$. 
Since $q>p-1$, using \eqref{reta1}, we conclude Step~1.

\subsection{Proof of Steps~1 and 2 in the subcritical case $q<q_*$ of Theorem~\ref{drum}} \label{sssub}

We need only to justify the first two steps in the outline of the proof of Theorem~\ref{drum}. We shall adapt the perturbation method
initiated by C\^irstea and Du in \cite{cd}. 
We construct a local family of sub-and super-solutions of \eqref{radio1}. 
Fix $\eta_0\in (0,1)$ such that $ 2\eta_0 (p-1)M<1$, where $M$ is the positive constant given by \eqref{stro}.  
For every $\eta\in [0,\eta_0]$, we define the function $v_{\pm \eta}$ and the constant $C_{\pm \eta}>0$ as
\neweq{case0}
 v_{\pm \eta}(r)=C_{\pm \eta}[\tilde u(r)]^{1\pm \eta} \ \text{ for } r\in (0,1) \quad \text{where } C_{\pm \eta}^{q-p+1}:=  (1\pm \eta)^{p-1} \left[1\pm \eta M(p-1)\right].
\endeq  
From this definition, we have that $\lim_{\eta\to 0^+} v_{\pm \eta}(r)=
\tilde u(r)$ for every $r\in (0,1)$ and $\lim_{\eta\to 0}C_{\pm \eta}=1$.

\vspace{0.2cm}
{\bf Step~1.} {\em 
For every $\varepsilon\in (0,1)$ small, there exists $r_\varepsilon\in (0,1)$ such that $\left(1-\varepsilon\right) v_{-\eta}$ and $\left(1+\varepsilon
\right) v_\eta$ is a sub-solution and
super-solution of \eqref{radio1} for $0<r<r_\varepsilon$, respectively, for every $\eta\in [0,\eta_0]$.}  
\vspace{0.2cm}

{\sc Claim}. {\em We see that $\tilde u$ satisfies \eqref{radio1} asymptotically as $r\to 0^+$.} 

\vspace{0.2cm}
{\em Proof of Claim.}
Let $r_0\in (0,1)$ be small so that $\tilde u(r_0)>t_0$, where $t_0$ is as in Remark~\ref{asb}. 
For all $r\in (0,r_0)$, we set  
\neweq{qqq}
\left\{
\begin{aligned}
 Q_{\pm \eta}(r)&:=r^{N-1+\vartheta} L_{\A}(r) \left|v_{\pm \eta}'(r)\right|^{p-2} v_{\pm \eta}'(r),\\
  P(r)&:=M\left[q+1+\frac{\tilde u(r) \,L_h'(\tilde u(r))}{L_h(\tilde u(r))}-\frac{\tilde u(r)\,\tilde u''(r)}{\left[\tilde u'(r)\right]^2}
+ \left(N-1+\sigma +\frac{rL_b'(r)}{L_b(r)} \right)\frac{\tilde u(r)}{r\,\tilde u'(r)}
\right].
\end{aligned} \right.
\endeq
One can verify that $\lim_{r\to 0^+}P(r)=1$ using the definition of $M$ in \eqref{stro}.  
By differentiating \eqref{stro}, we find that 
\neweq{kitt0}
Q_0(r)=M r^{N-1+\sigma} L_b(r) \frac{\left[\tilde u(r)\right]^{q+1}}{\tilde u'(r)}\,L_h(\tilde u(r))
\quad \text{for all } r\in (0,r_0).
\endeq
The claim follows since $Q_0'(r)$ equals the product between $P(r)$ in \eqref{qqq} and the right-hand side of 
\eqref{radio1} for $v=\tilde u$.

\vspace{0.2cm}
By twice differentiating \eqref{case0}, we obtain that 
\neweq{kitt1}
\left\{ \begin{aligned}
Q_{\pm \eta}(r)&=\left[C_{\pm \eta}(1\pm \eta)\right]^{p-1} [\tilde u(r)]^{\pm \eta (p-1)} \,Q_0(r),\\
\frac{dQ_{\pm \eta}}{dr}&=\left[C_{\pm \eta}(1\pm \eta)\right]^{p-1} [\tilde u(r)]^{\pm \eta (p-1)}
\left\{ \pm \eta\,(p-1)M\left[\tilde u(r)\right]^q L_h(\tilde u(r)) \,L_b(r) \,r^{N-1+\sigma} +\frac{dQ_{0}}{dr}\right\}.
\end{aligned}\right.
\endeq
Hence, using \eqref{case0} and the above claim, we find the following asymptotics (uniform with respect to $\eta$)
\neweq{nume} 
\frac{dQ_{\pm \eta}}{dr}\sim C_{\pm \eta}^q r^{N-1+\sigma} \,L_b(r)\, L_h(\tilde u(r))\,  [\tilde u(r)]^{q\pm \eta (p-1)}  \quad \text{as } r\to 0^+. 
\endeq
From Remark~\ref{asb} in Appendix~\ref{appe}, the function $t\longmapsto t^{q-p+1}\,L_h(t) $ is increasing on $(0,\infty)$ so that  
$$ L_h(\tilde u^{1-\eta}) \,[\tilde u(r)]^{-\eta\,\left(q-p+1\right)}\leq L_h(\tilde u(r)) \leq L_h(\tilde u^{1+\eta}) \,[\tilde u(r)]^{\eta\,\left(q-p+1\right)}
$$
for every $r\in (0,r_0)$ and all $\eta\in [0,\eta_0]$. 
This, together with \eqref{case0}, implies that for every $r\in (0,r_0)$ and all $\eta\in [0,\eta_0]$
\neweq{mnb} \pm C_{\pm \eta}^q\,  L_h(\tilde u(r)) \left[
 \tilde u(r)\right]^{q\pm \eta(p-1)}\leq \pm L_h( v_{\pm \eta}(r)/C_{\pm \eta}) \left[v_{\pm \eta}(r)\right]^q.
\endeq
Since $q>p-1$, from \eqref{nume}, \eqref{mnb} and Proposition~\ref{pn} in Appendix~\ref{appe}, we conclude the proof of Step~1. 

\vspace{0.2cm}
{\bf Step~2.} {\em Any positive solution $v$ of \eqref{radio1} with a strong singularity at $0$ satisfies} \eqref{mot}. 
\vspace{0.2cm}

Since
$v_{\pm\eta}$ is regularly varying at $0$ with index $-\left(1\pm \eta\right) m_0$, 
we conclude Step~2 based on Lemma~\ref{fact1} with $f=v_{\pm \eta}$.

\begin{lemma} \label{fact1}
Let $({\mathbf A_1})$--$({\mathbf A_3})$ hold and $q<q_*$.   
Suppose that $v$ is a positive solution  of \eqref{radio1} with a strong singularity at zero. 
Let $f $ be a regularly varying function at zero with real index $-\kappa$. With $m_0$ given by \eqref{stri}, the following hold:
\begin{enumerate}
\item[{\rm (a)}] If $\kappa>m_0$, then 
$\lim_{r\to 0^+} v(r)/f(r)=0$.
\item[{\rm (b)}] If $\kappa < m_0$, then 
$\lim_{r\to 0^+} v(r)/f(r)=\infty$. 
\end{enumerate}
\end{lemma}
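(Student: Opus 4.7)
The plan is to dispose of part (a) quickly from the \emph{a priori} bound \eqref{jam}, and to prove part (b) in two stages: first use the ODE to show $\limsup_{r\to 0^+} v/f=\infty$ by iterating upper bounds, then upgrade this to a limit via a barrier comparison.

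For part (a), Assumption $(\mathbf{A_3})$ together with \eqref{jam} produces a function $T$ regularly varying at $0$ with index $-m_0$ such that $v(r)\leq C\,T(r)$ for $r$ small. Since $T/f$ is then regularly varying at $0$ with positive index $\kappa-m_0>0$, we have $T/f\to 0$, hence $v/f\to 0$.

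For the first stage of (b), I would establish the key claim: for every $\kappa_0 \in (0,m_0)$, $\limsup_{r\to 0^+} v(r)\, r^{\kappa_0}=\infty$. Argue by contradiction, assuming $v(r)\leq C r^{-\kappa_0}$ pointwise on $(0,r_0]$. Substitute into the once-integrated form of \eqref{radio1},
\[
r^{N-1+\vartheta} L_\A(r)\,|v'(r)|^{p-1} = r_0^{N-1+\vartheta} L_\A(r_0)\,|v'(r_0)|^{p-1} + \int_r^{r_0} \xi^{N-1+\sigma} L_b(\xi)\, L_h(v(\xi))\, v(\xi)^q\, d\xi.
\]
When $\kappa_0 < (N+\sigma)/q$, Karamata's theorem shows the integral remains bounded as $r\to 0^+$, so $(y'(s))^{p-1}$ stays bounded, contradicting $y'(s)\to \infty$; the latter follows from $y(s)/s \to \infty$ together with $y''\geq 0$, equivalently from \eqref{liminf}. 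When $\kappa_0 \geq (N+\sigma)/q$, the integral diverges and a further integration in $r$ yields the improved bound $v(r)\leq C'\, r^{-\kappa_1}$ (up to slowly varying factors) with $\kappa_1 = T(\kappa_0) := (q\kappa_0 - p - \sigma + \vartheta)/(p-1)$. The affine map $T$ has fixed point $m_0$ and slope $q/(p-1)>1$, so iterating from $\kappa_0 < m_0$ drives $\kappa_n\to -\infty$; after finitely many steps $\kappa_n < (N+\sigma)/q$, and the preceding contradiction closes the argument. (For $\kappa\leq m_2$ the lemma is in any case trivial from $v/\Phi\to\infty$, so the interesting range is $\kappa\in (m_2, m_0)$.)

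The second stage of (b) upgrades $\limsup v/f = \infty$ to $\lim v/f = \infty$. For each $\mu>0$, the function $u_\mu := \mu f$ is a sub-solution of \eqref{radio1} on $(0,r_\mu]$ for some $r_\mu>0$: the verification is parallel to Step~1 in \S\ref{sssub} and succeeds because, for $\kappa<m_0$, the LHS of the sub-solution inequality has leading order $r^{N+\vartheta-p-\kappa(p-1)-1}$, which dominates the RHS of order $r^{N-1+\sigma-q\kappa}$ as $r\to 0^+$. The first stage supplies a sequence $r_n\downarrow 0$ with $v(r_n)/f(r_n)\to\infty$; fixing $n_0$ with $r_{n_0}\leq r_\mu$ and $v(r_n)\geq\mu f(r_n)$ for all $n\geq n_0$, the comparison principle (Lemma~\ref{cp}) applied on each annulus $\{r_n\leq |x|\leq r_{n_0}\}$ gives $u_\mu\leq v$ there. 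Letting $n\to\infty$ we obtain $v\geq\mu f$ on $(0,r_{n_0}]$, and since $\mu$ is arbitrary, $v/f\to\infty$. The main obstacle is the careful handling of slowly varying factors through the upper-bound iteration in the first stage, and verifying that the small losses they produce in each step do not undo the decrease in $\kappa_n$ encoded by the map $T$.
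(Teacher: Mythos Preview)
Your treatment of part (a) matches the paper's. For part (b), your two-stage approach (bootstrap iteration on upper bounds, then barrier comparison) is correct but differs substantially from the paper's one-shot argument. The paper picks $q_1\in(q,q_*)$ close enough to $q$ that $\kappa<(p+\sigma-\vartheta)/(q_1-p+1)$, uses \eqref{liminf} together with the slow variation of $L_h$ to get $L_h(y)\,y^q\leq s^{q_1}(y')^{q_1}$ for large $s$, feeds this into \eqref{rado2}, and integrates twice to obtain directly a \emph{lower} bound $y'(s)\geq C\bigl[\int_0^{\Phi^{-1}(s)} f_{q_1}(\xi)\,d\xi\bigr]^{-1/(q_1-p+1)}$; the resulting lower barrier for $v$ is regularly varying at $0$ with index $-(p+\sigma-\vartheta)/(q_1-p+1)<-\kappa$, which gives $v/f\to\infty$ immediately, with no iteration and no appeal to the comparison principle. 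Your route works too---the affine map $T$ with fixed point $m_0$ and slope $q/(p-1)>1$ does drive the exponents down, and the accumulated slowly varying factors stay slowly varying---but two small points need patching: your claim that the case $\kappa=m_2$ is ``trivial'' is not quite right, since $\Phi/f$ is then merely slowly varying and need not tend to infinity; and using $u_\mu=\mu f$ as a sub-solution tacitly assumes regularity of $f$ that the hypothesis does not supply. Both are fixed at once by running Stage~2 with $u_\mu=\mu\, r^{-\kappa_0}$ for some $\kappa_0\in(\max\{\kappa,m_2\},m_0)$, after which $v\,r^{\kappa_0}\to\infty$ and hence $v/f\to\infty$ for every $f\in RV_{-\kappa}(0+)$ with $\kappa<m_0$. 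The paper's argument is shorter and avoids Lemma~\ref{cp} altogether, at the price of the single clever inequality $y^qL_h(y)\leq s^{q_1}(y')^{q_1}$; yours is more elementary and modular, at the cost of more bookkeeping.
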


\begin{proof}
We adapt ideas from C\^irstea and Du \cite[Theorem~1.4]{cd}. 

\vspace{0.2cm}
(a) The \textit{a priori} estimates in \eqref{bst} (see Lemma~\ref{apr} for a proof) show that $v$ 
is bounded from above near zero by a regularly varying function at $0$
with index $-m_0$. The assertion now follows easily since every regularly varying function
at $0$ with positive (respectively, negative) index must converge to $0$ (respectively, $\infty$). 

\vspace{0.2cm}
(b) Since $\kappa<m_0$,  we can choose $q_1\in (q,q_*)$ sufficiently close to $q$ such that $\kappa <(p+\sigma-\vartheta)/(q_1-p+1)$. 
Then, $\lim_{t\to \infty}t^{q-q_1} L_h(t) =0$ (see Remark~\ref{rlimo} in Appendix~\ref{appe}) and using  \eqref{liminf}, we can let $s_0>0$ large and find that
\neweq{fjp} 
L_h(y(s))\, [y(s)]^q\leq [y(s)/2]^{q_1}\leq  s^{q_1}[y'(s)]^{q_1}\quad \text{for all } s\geq s_0. 
\endeq 
We set $f_{q_1}(r):=r^{N-1+\sigma} L_b(r) [\Phi(r)]^{q_1}$ for $r\in (0,1)$. Since $\Phi$ is regularly varying at $0$ with index $-m_2$ (see \eqref{kphi}), we find that $f_{q_1}$ is regularly varying at $0$ with index $N+\sigma-q_1 m_2-1$, which is greater than $-1$. This gives that $\int_{0^+} f_{q_1}(\xi)\,d\xi <\infty$. Moreover, the function
$F_{q_1}(r)= \int_r^{\Phi^{-1}(s_0)}  \left[\int_0^{\tau} f_{q_1}(\xi)\,d\xi
\right]^{-\frac{1}{q_1-p+1}} |\Phi'(\tau)|\, d\tau$ is regularly varying at zero with index
$-(p+\sigma-\vartheta)(q_1-p+1)$, which is less than $-\kappa$ from our choice of $q_1$. We thus have $\lim_{r\to 0^+} F_{q_1}(r)/f(r)=\infty$.

We conclude that $\lim_{r\to 0^+} v(r)/f(r)=\infty$ by showing that $\liminf_{r\to 0^+} v(r)/F_{q_1}(r)>0$. Indeed, we see that 
\begin{equation} \label{faq}
\liminf_{r\to 0^+} \frac{v(r)}{ F_{q_1}(r)} 
= \liminf_{s\to \infty} \frac{y(s)}{\int_{s_0}^s \left[  \int_0^{\Phi^{-1}(t)} f_{q_1}(\xi)\,d\xi
\right]^{-\frac{1}{q_1-p+1}} \,dt}. 
\end{equation}
From \eqref{rado2} and \eqref{fjp},
we deduce that 
\neweq{pomm} \left[y'(s)\right]^{p-2-q_1} y''(s)\leq -\frac{C_{N,p}^{-p+1}}{p-1} f_{q_1}(\Phi^{-1}(s))\,\frac{d (\Phi^{-1}(s))}{ds}\quad \text{for all } s>s_0. 
\endeq
Recall that $\lim_{s\to \infty} y'(s)=\infty$ since $v$ has a strong singularity at $0$. 
Thus, by integrating \eqref{pomm}, we obtain that 
$$ 
y'(s)\geq \left[ \frac{\left(q_1-p+1\right) C_{N,p}^{-p+1}}{p-1}  \int_0^{\Phi^{-1}(s)} f_{q_1}(\xi)\,d\xi
\right]^{-\frac{1}{q_1-p+1}} \quad \text{for all } s> s_0, 
$$ which shows that the right-hand side of \eqref{faq} is positive. 
This concludes the assertion of Lemma~\ref{fact1}(b). 
\end{proof}

\section{Proof of Theorem~\ref{th1.2}(b): Removability of singularities} \label{sec3}
Throughout this section, we let Assumptions $({\mathbf A_1})$--$({\mathbf A_3})$ hold. The proof of Theorem~\ref{th1.2}(b) relies on two main ingredients, whose verification is postponed to the end of this section.

\begin{lemma} \label{remcon}
If $u$ is a positive solution of \eqref{une1} such that $\lim_{|x|\to 0}u(x)/\Phi(x)=0$, then there exists $\lim_{|x|\to 0}u(x)\in (0,\infty)$ and $\lim_{|x|\to 0} |x||\nabla u(x)|=0$. Moreover, 
$u$ can be extended as a continuous positive solution of \eqref{une1} in $B_1$. 
\end{lemma}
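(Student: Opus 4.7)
The plan is to proceed in three stages: first establish $L^\infty$-boundedness of $u$ near $0$ via comparison with constants perturbed by the fundamental solution $\Phi$; second, remove the singularity by a cut-off argument so that $u$ satisfies \eqref{une1} distributionally on all of $B_1$ and, by elliptic regularity, extends continuously to $0$; third, invoke the strong maximum principle to obtain positivity at $0$ and derive the gradient decay from the regularity estimates.

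For the boundedness, fix $\delta \in (0,1/2)$ and set $M := \max_{|x|=\delta} u(x) < \infty$. Since $b, h \geq 0$, $u$ is $\mathcal A$-$p$-subharmonic in $B^*$, that is $-\mathrm{div}(\mathcal A(|x|) |\nabla u|^{p-2}\nabla u) \leq 0$. For each $\varepsilon > 0$, the hypothesis yields $r_\varepsilon \in (0,\delta)$, with $r_\varepsilon \to 0^+$ as $\varepsilon \to 0^+$, such that $u \leq \varepsilon \Phi$ on $\partial B_{r_\varepsilon}$. On the annulus $A_\varepsilon := \{r_\varepsilon < |x| < \delta\}$, the function $w := M + \varepsilon \Phi$ is $\mathcal A$-$p$-harmonic (as $\Phi$ is the fundamental solution) and satisfies $w \geq u$ on $\partial A_\varepsilon$. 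Lemma~\ref{cp} (with $B \equiv 0$) gives $u \leq M + \varepsilon \Phi$ throughout $A_\varepsilon$. Fixing $x \in B_\delta \setminus \{0\}$ and letting $\varepsilon \to 0^+$ (so that eventually $r_\varepsilon < |x|$) yields $u(x) \leq M$, so $u$ is bounded on $B_\delta \setminus \{0\}$.

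With $u$ bounded, $h(u)$ is bounded on $B_\delta$, and since $b \in L^1_{\mathrm{loc}}(B_1)$ (as $\sigma > -N$), we have $b(x) h(u) \in L^1_{\mathrm{loc}}(B_1)$. The gradient estimate of Lemma~\ref{reg} gives $|\nabla u(x)| \leq C/|x|$ in a punctured neighbourhood of $0$, whence $\mathcal A(|x|)|\nabla u|^{p-1} \leq C |x|^{\vartheta - p + 1} L_{\mathcal A}(|x|) \in L^1_{\mathrm{loc}}(B_1)$ (as $\vartheta - p + 1 > -N$). Testing \eqref{umn} on $B^*$ against $\phi\, \eta_\varepsilon$ for $\phi \in C^1_c(B_1)$ and a cut-off $\eta_\varepsilon$ with $\eta_\varepsilon \equiv 0$ on $B_\varepsilon$, $\eta_\varepsilon \equiv 1$ outside $B_{2\varepsilon}$, and $|\nabla \eta_\varepsilon| \leq C/\varepsilon$, the spurious term $\int \mathcal A(|x|)|\nabla u|^{p-2}\nabla u \cdot \nabla \eta_\varepsilon\, \phi \, dx$ is bounded by $C \varepsilon^{-1} \int_\varepsilon^{2\varepsilon} r^{N+\vartheta - p} L_{\mathcal A}(r) \, dr$, which tends to $0$ since $N + \vartheta > p$ and $L_{\mathcal A}$ is slowly varying (Karamata). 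Passing to $\varepsilon \to 0^+$ shows \eqref{une1} holds in $\mathcal D'(B_1)$. Weighted De~Giorgi--Nash--Moser regularity then gives H\"older continuity of $u$ on $B_{\delta/2}$, so $u(0) := \lim_{|x| \to 0} u(x) \in [0, \infty)$ exists.

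The strong maximum principle (Theorem~2.5.1 of \cite{PS}, as invoked after Definition~\ref{defs}) applied to the extended non-negative solution on $B_1$ excludes $u(0) = 0$; otherwise $u \equiv 0$ on a ball around $0$, contradicting $u > 0$ on $B^*$. Hence $u(0) \in (0, \infty)$. Since $u$ is now continuous and positive at $0$ with locally bounded right-hand side in the equation, the local regularity of Lemma~\ref{reg} improves $|\nabla u|$ from $O(1/|x|)$ to being bounded near $0$, whence $|x||\nabla u(x)| \to 0$. The principal obstacle is the cut-off removal step, which rests on the gradient estimate of Lemma~\ref{reg} for bounded solutions; once \eqref{une1} holds distributionally on $B_1$, the remaining assertions follow from standard weighted-elliptic regularity and the strong maximum principle.
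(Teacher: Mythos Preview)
Your boundedness argument and the cut-off step showing that the equation holds in $\mathcal D'(B_1)$ are fine and essentially match the paper's approach (which draws on \cite{cd} and \cite{BCCT} for these parts). The difficulty lies in the two steps that follow.

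First, the appeal to ``weighted De Giorgi--Nash--Moser regularity'' to obtain H\"older continuity of $u$ across the origin is not justified within the paper's framework. The weight $\mathcal A(|x|)=|x|^{\vartheta}L_{\mathcal A}(|x|)$ is genuinely degenerate or singular at $0$ (it vanishes if $\vartheta>0$ and blows up if $\vartheta<0$), so standard interior regularity for the $p$-Laplacian does not apply there. One would need a weighted theory (e.g.\ for Muckenhoupt-type weights) that is neither cited nor developed in the paper, and whose hypotheses you have not checked against $(\mathbf A_1)$.

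Second, and more seriously, the strong maximum principle cannot be invoked at the origin. The paper is explicit that Theorem~2.5.1 of \cite{PS} is applied only on subsets $\Omega\subset\subset B_1\setminus\{0\}$, precisely because $\mathcal A$ is merely in $C(0,1]$ and positive away from $0$. If $u(0)=0$, you cannot conclude $u\equiv 0$ near $0$ from a maximum principle whose structural hypotheses fail at the point where the minimum is attained. This is the heart of the matter: the positivity of the limit is not a soft consequence of the equation extending across $0$.

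The paper handles positivity by a different and more robust route. Since $u$ is bounded near $0$, one has $b(x)\,h(u)\le C|x|^{\sigma}L_b(|x|)\,u^{p-1}$, so $u$ is a super-solution of the homogeneous-in-$u$ equation \eqref{pla}. One then builds a positive \emph{radial} solution $v_\infty$ of \eqref{pla} with $v_\infty\le u$ near $0$, shows via comparison that $v_\infty$ has a (finite, non-negative) limit at $0$, and rules out $\lim_{r\to 0^+}v_\infty(r)=0$ by an ODE argument after the change of variable $s=\Phi(r)$: the resulting integral inequality \eqref{k:1} forces a function $V(s)$ of \emph{negative} regular-variation index to stay bounded below as $s\to\infty$, a contradiction. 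This lower-barrier construction replaces the maximum-principle step that your argument cannot supply at the degenerate point.

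Finally, your gradient-decay step is also incomplete: Lemma~\ref{reg} with constant $g$ yields only $|\nabla u(x)|\le C/|x|$, not boundedness of $|\nabla u|$; the improvement to $|x|\,|\nabla u(x)|\to 0$ requires an additional argument (in the paper, this is obtained along the lines of \cite[Proposition~3]{BCCT} once $\lim_{|x|\to 0}u(x)$ is known to exist).
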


This result, which was also invoked in the proof of Theorem~\ref{th1.2}(a)(i), generalises \cite[Lemma~3.2(ii)]{cd} (where $\A=1$) and \cite[Proposition~3]{BCCT} (where $p=2$, $b=1$ and $h(u)= u^q$).

\begin{lemma} \label{radsy} 
If \eqref{radio1} has a positive solution 
with either a weak or a strong singularity at $0$, then $b(x)\, h(\Phi)\in L^1(B_{1/2})$. 
\end{lemma}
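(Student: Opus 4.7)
The plan is to split the argument into the weak and strong singularity cases, both handled via the change of variables $y(s)=v(r)$ with $s=\Phi(r)$, which transforms the ODE \eqref{radio1} into \eqref{rado2} and moves the singularity from $r=0$ to $s=\infty$; throughout, $y$ is convex and increasing with $y'>0$.

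For a \textbf{weak singularity}, $v(r)/\Phi(r)\to\lambda\in(0,\infty)$ forces $y(s)/s\to\lambda$, and convexity of $y$ together with $y'$ increasing gives $y'(s)\to\lambda$ as $s\to\infty$. Writing \eqref{rado2} as $\frac{d}{ds}\bigl[(y'(s))^{p-1}\bigr]=C_{N,p}^{-p+1}\,r^{N-1+\sigma}L_b(r)L_h(y)y^q|dr/ds|$ and integrating from $s_0$ to $\infty$ yields $\lambda^{p-1}-(y'(s_0))^{p-1}=C_{N,p}^{-p+1}\int_0^{r_0}r^{N-1+\sigma}L_b(r)L_h(v)v^q\,dr<\infty$. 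Hence $b(x)h(v)\in L^1(B_{r_0})$, and since $v\sim\lambda\Phi$ near $0$ and $L_h$ is slowly varying at $\infty$, we obtain $b(x)h(\Phi)\in L^1(B_{1/2})$.

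For a \textbf{strong singularity} I distinguish three subcases based on $q$ versus $q_*$. If $q>q_*$, then $m_0<m_2$; the a priori estimate \eqref{bst} yields $v(r)\le Cr^{-m_0}$ up to slowly varying factors, which combined with $\Phi\sim r^{-m_2}$ forces $v/\Phi\to 0$, contradicting strong singularity. If $q<q_*$, then by Remark~\ref{remark2} the integrand $r^{N-1+\sigma}L_b(r)h(\Phi(r))$ is regularly varying at $0$ of index $N-1+\sigma-qm_2>-1$, so $b(x)h(\Phi)\in L^1(B_{1/2})$ automatically. In the critical case $q=q_*$ I would mimic Remark~\ref{remark6}: first establish, in full generality (without \eqref{doii} or \eqref{doi}), the two-sided bound $1/2\le sy'(s)/y(s)\le 2\ell$ for large $s$, whose lower bound is \eqref{liminf} and whose upper bound follows because \eqref{loc1} together with \eqref{jam} shows that $sy''/y'$ is bounded, so $s\mapsto sy'-Cy$ is eventually non-increasing and hence $\ell:=\limsup sy'/y<\infty$. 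Using $y\le 2\ell\cdot sy'$ in \eqref{rado2}, dividing by $(y')^{q_*}$ and identifying the LHS with $\frac{p-1}{p-1-q_*}\frac{d}{ds}[(y')^{p-1-q_*}]$, integration from $s_0$ to $\infty$ (with $(y'(s))^{p-1-q_*}\to 0$, since $y'\to\infty$ and $p-1-q_*<0$) yields
\begin{equation*}
\int_0^{r_0} r^{N-1+\sigma}L_b(r)L_h(v(r))\Phi(r)^{q_*}\,dr<\infty.
\end{equation*}

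To complete the proof I must pass from $L_h(v)$ to $L_h(\Phi)$ in this integrand. In the critical case, the a priori bound on $v$ together with $\Phi\sim cr^{-m_2}L_\A^{-1/(p-1)}$ shows that $v$ and $\Phi$ have the same index of regular variation at $0$, so $v(r)=R(r)\Phi(r)$ with $R$ slowly varying and (for strong singularity) $R\to\infty$. Combining Potter's inequality with the a priori constraint $R^{q_*-p+1}L_h(v)L_b\le CL_\A^{q_*/(p-1)}$ extracted from \eqref{jam} delivers the uniform domination $L_h(\Phi(r))\le\mathrm{const}\cdot L_h(v(r))$ needed to conclude $\int_0^{r_0}r^{N-1+\sigma}L_b L_h(\Phi)\Phi^{q_*}\,dr<\infty$, equivalently $b(x)h(\Phi)\in L^1(B_{1/2})$. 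The hard part will be exactly this last step: uniformly comparing $L_h$ at the two asymptotically different arguments $v$ and $\Phi$, which requires carefully exploiting the normalized slow variation of $L_h$ (Remark~\ref{asb}) together with the a priori constraint on the growth rate of $R$.
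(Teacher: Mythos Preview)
Your treatment of the weak singularity and of the strong singularity when $q\neq q_*$ is correct and, for the weak case, slightly more elementary than the paper's (which quotes Taliaferro rather than integrating $\frac{d}{ds}[(y')^{p-1}]$ directly). One small slip: in the critical case you need the inequality $y\geq sy'/(2\ell)$ (from $sy'/y\leq 2\ell$), not $y\leq 2\ell\,sy'$, in order to bound the integral from above after dividing by $(y')^{q_*}$.

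The genuine gap is the ``hard part'' you flag in the critical case $q=q_*$. Your Potter-plus-a-priori sketch does not close. Potter gives $L_h(\Phi)\leq R^\delta L_h(v)$ for any $\delta>0$, while \eqref{jam} gives $R^{q_*-p+1}L_h(v)\leq C\,L_\A^{q_*/(p-1)}/L_b$; substituting, the target integral is dominated by $\int_0^{r_0} r^{-1}\psi(r)^{1-\beta}\,dr$ with $\psi:=L_b L_\A^{-q_*/(p-1)}L_h(v)$ and $\beta=\delta/(q_*-p+1)$. But $\int_0^{r_0} r^{-1}\psi\,dr<\infty$ does \emph{not} imply $\int_0^{r_0} r^{-1}\psi^{1-\beta}\,dr<\infty$ for any $\beta>0$: take $\psi(r)=[\ln(1/r)]^{-1}[\ln\ln(1/r)]^{-2}$. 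So without an additional structural hypothesis like \eqref{doii} you cannot uniformly compare $L_h(\Phi)$ and $L_h(v)$.

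The paper avoids this comparison entirely by a reduction trick. Choose $m\in(p-1,q_*)$ and set $\chi(t)=t^{q_*-m}L_h(t)$, which is eventually increasing. Since the strong singularity gives $v\geq a_0\Phi$ near $0$, one has $L_h(v)\,v^{q_*}\geq c\,\chi(\Phi(r))\,v^{m}$. Thus $v$ is a subsolution of an equation of the same type with exponent $m$, weight $\tilde b(r)=c\,r^\sigma L_b(r)\,\chi(\Phi(r))$, and $L_h\equiv 1$. One constructs a radial solution $v_\infty\geq v$ of this new equation, which is again critical (its new critical exponent equals $m$) but now trivially satisfies \eqref{doii} with $\gamma=0$, so Remark~\ref{remark6} applies directly to $v_\infty$. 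Unpacking $\tilde b$ yields precisely $\int_0^{r_0} r^{N-1+\sigma}L_b(r)L_h(\Phi(r))\Phi(r)^{q_*}\,dr<\infty$, i.e.\ $b(x)h(\Phi)\in L^1(B_{1/2})$. The key idea you are missing is this use of the monotonicity of $t\mapsto t^{q_*-m}L_h(t)$ together with $v\geq a_0\Phi$ to absorb the unknown $L_h(v)$ into a modified weight evaluated at $\Phi$, thereby reducing to the case $L_h\equiv 1$.
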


We show how to use Lemma~\ref{remcon} and Lemma~\ref{radsy} to finish the proof of Theorem~\ref{th1.2}(b). 
We thus assume that $b(x)\, h(\Phi)\not\in L^1(B_{1/2})$ and prove that any positive solution of \eqref{une1} can be extended as a positive solution of \eqref{une1} in $B_1$. By Remark~\ref{remark2}, we have $q\geq q_*$, with $q_*$ as in
\eqref{critex}. Our argument is twofold:

\vspace{0.2cm}
{\bf Case 1:} $q>q_*$. 

\vspace{0.2cm}
Since $m_0<m_2$, the claim follows from Lemma~\ref{remcon} and the \textit{a priori} estimates in 
\eqref{jam}. Indeed, we have $\limsup_{|x|\to 0} u(x)/T(|x|)<\infty$ for a function $T$ regularly varying at $0$ with index $-m_0$.
Using that $\Phi\in RV_{-m_2}(0+)$, by Remark~\ref{rlimo} and Definition~\ref{def-reg} in Appendix~\ref{appe}, we find that 
$\lim_{r\to 0^+} T(r)/\Phi(r)=0$ so that $\lim_{|x|\to 0} u(x)/\Phi(x)=0$ for any positive solution $u$ of \eqref{une1}. 
Then, by Lemma~\ref{remcon}, we conclude the proof of Theorem~\ref{th1.2}(b).

\vspace{0.2cm}
{\bf Case 2:} $q=q_*$. 

\vspace{0.2cm}
The previous argument no longer applies since $T$ and $\Phi$ are now regularly varying 
at $0$ with the same index $-m_0$. Hence, $T/\Phi$ is slowly varying at $0$, whose 
behaviour at $0$ is, in general, undetermined as illustrated by Example~\ref{example1} 
in Appendix~\ref{appe}. In view of Lemma~\ref{remcon}, 
we conclude the proof by showing that $\lim_{|x|\to 0} u(x)/\Phi(x)=0$.

Assuming the contrary and using \eqref{mac}, we deduce $\lim_{|x|\to 0} u(x)=\infty$. Then there exists $k \in (0,1/2)$ and a positive solution $v_*$ of \eqref{radio1} for $0<r<k$ 
such that $C_1 u\leq v_*\leq C_2$ in $B^{*}_k$,
where $C_1$ and $C_2$ are positive constants. 
Thus, by Lemma~\ref{radsy}, we cannot have $\limsup_{|x|\to 0} u(x)/\Phi(x)\in (0, \infty]$. This completes the proof of Theorem~\ref{th1.2}(b).

\begin{proof}[Proof of Lemma~\ref{remcon}]
Let $u$ be a positive solution of \eqref{une1} such that $\lim_{|x|\to 0}u(x)/\Phi(x)= 0$. For convenience, we define
 $$\theta:=\limsup_{|x| \rightarrow 0} u(x). $$
By the comparison principle (Lemma~\ref{cp}), we find as in \cite[Lemma~3.2]{cd} that $\theta<\infty$. Since \eqref{gr} fails for our general assumption ({$\mathbf A_1$}), we cannot invoke
 \cite[Theorem~1]{serrin65} to conclude the proof, unlike the case $\A=1$ treated in \cite{cd}.

\vspace{0.2cm}
We show below that $\theta>0$. 
In the special case $p=2$ and $h(t)=t^q$ of \cite{BCCT}, the claim follows by a reduction to radial solutions, coupled with a change of variable and 
\cite[Theorem~1.1]{tal}. For our general divergence-form equation, we require different ideas that are inspired by
\cite[Lemma~5.2]{florica}. 

Since Assumptions $({\mathbf A_1})$--$({\mathbf A_3})$ hold and $\theta<\infty$, there exists a positive constant $C$ such that 
$$b(x) \,h(u) \leq C|x|^{\sigma} L_b(|x|) \,u^{p-1}\quad \text{for all }0<|x|\leq 1/2. $$
Similar to Step~2 of \cite[Lemma~5.2]{florica}, we construct a positive radial solution $v_\infty$ of 
\neweq{pla} -{\rm div}\,(\mathcal A(|x|)\, |\nabla v|^{p-2} \nabla v)+C|x|^{\sigma} L_b(|x|)\, v^{p-1} =0\quad \text{for } 0<|x|<1/2
\endeq
such that $v_\infty(|x|)\leq u(x)$ for $0<|x|\leq 1/2$. By a contradiction argument and Lemma~\ref{cp}, we find that
the radial solution $v_\infty$ of \eqref{pla} has a non-negative limit at $0$. To conclude that $\theta>0$, it suffices to show that $\lim_{r\to 0^+} v_\infty(r)>0$. 
By assuming that $\lim_{r\to 0^+} v_\infty(r)=0$, we arrive at a contradiction as follows. We use the change of variable $z(s)=v_\infty(r)$ with $s=\Phi(r)$. Then, we have $\lim_{s\to \infty}z(s)= 0$. Moreover, $z$ is a positive solution of the ordinary differential equation
\neweq{odes}
\left|\frac{dz}{ds} \right|^{p-2} \frac{d^2 z}{ds^2}=C_1 r^{N-1+\sigma} L_b(r)\,
\left[z(s)\right]^{p-1} \left| \frac{dr}{ds}\right| \ \ \text{for } s\in (\Phi (1/2),\infty),
\endeq 
where $C_1$ denotes a positive constant. 
Since $z''(s)>0$, then $z'(s)$ is increasing on $(\Phi (1/2),\infty)$ with
$\lim_{s\to \infty} z'(s)=0$. Therefore, using \eqref{odes}, we find that
$$ z(s)=C_2 \int_s^\infty \left( \int_0^{\Phi^{-1}(t)} \xi^{N-1+\sigma} L_b(\xi) \left[z(\Phi(\xi)) \right]^{p-1}\,{\mathrm d\xi}\right)^{\frac{1}{p-1}}\,dt \quad 
\text{for } s>\Phi (1/2),
$$
 where $C_2$ is a positive constant. Since $z$ is decreasing, we infer that 
\neweq{k:1} 
1/C_2\leq   \int_s^\infty
 \left( \int_0^{\Phi^{-1}(t)} \xi^{N-1+\sigma} L_b(\xi) \,{\mathrm d\xi}\right)^{\frac{1}{p-1}}\,dt \quad 
\text{for every } s>\Phi (1/2).
\endeq
Let $V(s)$ denote the right-hand side of \eqref{k:1}. 
We claim that $V(s)$ is well-defined and $V(s)\to 0$ as $s\to \infty$. Indeed, we have  
$\Phi\in RV_{-m_2}(0+)$ and thus $\Phi^{-1}\in RV_{-1/m_2}(\infty)$. Note that 
$r\longmapsto \int_0^r \xi^{N-1+\sigma}L_b(\xi)\,d\xi$ is regularly varying at $0^+$ with positive index given by $\sigma+N$. Consequently,  
$V$ is regularly varying at $\infty$ with {\em negative} index $(p+\sigma-\vartheta)/(p-N-\vartheta)$ so that the claim follows. 
Then, \eqref{k:1} leads to a contradiction, which proves that $\lim_{r\to 0^+} v_\infty(r)>0$ and, hence, $\theta>0$. 

To obtain that
$\lim_{|x|\to 0}u(x)=\theta$, $\lim_{|x|\to 0} |x||\nabla u(x)|=0$ and \eqref{umn} holds for all $\phi\in C^1_c(B_1)$, we proceed as in the special case of \cite[Proposition~3]{BCCT}. Since the ideas are very similar, we skip the details. \end{proof}

\begin{proof}[Proof of Lemma~\ref{radsy}] 
We show that $b(x)\, h(\Phi)\in L^1(B_{1/2})$ is a necessary condition for the existence of a positive solution of 
\eqref{radio1} with a weak or strong singularity at $0$. 
Let $v$ be a positive solution of \eqref{radio1} with $\lim_{r\to 0^+} v(r)/\Phi(r)=\lambda \neq 0$.

\vspace{0.2cm}
First, we consider the case $\lambda \in (0,\infty)$.   
Let $\Phi^{-1}(t)$ denote the inverse of $\Phi$, which exists for any $t>0$. 
By the change of variable $y(s)=v(r)$ with $s=\Phi(r)$, we find \eqref{rado2}. 
Since $v(r)\sim \lambda \Phi(r)$ as $r\to 0^+$, we have $y(s)\sim \lambda s$ as $s\to \infty$. Using that $d^2 y/ds^2\geq 0$, we get that 
$d y/ds$ is increasing on $(0,\infty)$ so that $\lim_{s\to \infty} d y/ds=\lambda$. 
We define $\Lambda$ by 
\begin{equation} \label{lambs} 
\Lambda(s):= \frac{C_{N,p}^{-p+1}}{p-1} [\Phi^{-1}(s)]^{N-1+\sigma} L_b(\Phi^{-1}(s))\,
L_h(s) \,s^{p-2} \left|\frac{dr}{ds} \right| \ \ \text{for }s>0 \text{ large}. 
\end{equation} 
Since $L_h \in RV_0(\infty)$ and  $y(s) \sim \lambda s$ as $s \to \infty$, we have $L_h(y(s))\sim L_h(s)$ as $s\to \infty$. 
We apply \eqref{doiii} to \eqref{rado2} to get that
\begin{equation} \label{sop}
\left\{ \begin{aligned}
& \frac{d^2 y}{ds^2}\sim \Lambda(s) [y(s)]^{q-p+2}\ \ \text{as } s\to \infty ,\\
& y'(s) \to \lambda \ \ \text{as } s\to \infty. 
\end{aligned}\right.
\end{equation}
By Taliaferro \cite[p.~96]{tal}, we get that $\int^\infty  t^{q-p+2}\Lambda(t)\,dt<\infty$. Then applying a change of variable $r=\Phi^{-1}(t)$ and using Remark~\ref{remark2}, we obtain that $b(x)\, h(\Phi)\in L^1(B_{1/2})$. 

\vspace{0.2cm}
Secondly, let $\lambda=\infty$. We adapt ideas from the proof of \cite[Lemma~5.8]{florica}. 
Choose $m\in (p-1,q_*)$ and for $t>0$,
 set $\chi(t)=t^{q_*-m} L_h(t)$. By the property in \eqref{nor} in the Appendix~\ref{appe}, we have 
$\lim_{t\to \infty} t\chi'(t)/\chi(t) =q_*-m>0$ and, hence, $\chi(t)$ is increasing for $t>0$ sufficiently large.  
Since $\lim_{r\to 0^+} v_*(r)/\Phi(r)=\infty$, there exists a constant $a_0>0$ such that $v_*(r)\geq a_0 \Phi(r)$ for all $0<r\leq 1/2$.
Then there exists a constant $c>0$ such that 
\neweq{lllhhh}
L_{h}(v_*) \, v_*^{q_*}\geq c \chi(\Phi(r)) \, v_*^m\quad \text{for all } r\in (0,1/2].
\endeq
Define a function $\tilde b (r):= c\,r^\sigma L_b(r) \, \chi(\Phi(r))$ for $r\in (0,1/2]$. We construct a positive radial solution $v_\infty$ of 
\neweq{ent} 
-{\rm div}\,(\mathcal A(|x|)\, |\nabla v|^{p-2} \nabla v)+\tilde b (|x|) \,v^m=0\quad \text{in } B^{*}_{1/2} 
\endeq
such that 
$v_*\leq v_\infty$ in $B^*_{1/2}$. Then, $v_\infty$ has a strong singularity at $0^+$. Since $\chi\in RV_{q_*-m}(\infty)$, we find that 
$\tilde b \in RV_{\tilde{\sigma}}(0+)$ with $\tilde{\sigma}$ given by $m(N+\sigma)/q_*-N$, which is greater than $\vartheta-p$ from our choice of $m$. We note that \eqref{ent} corresponds to \eqref{radio1} in the {\em critical} case with $r^\sigma L_b(r) = \tilde{b}(r)$, $L_h \equiv 1$ and $q=m$, where \eqref{doii} holds.
Using Remark~\ref{remark6} on \eqref{ent},  and the definition of $\tilde b$, we conclude that $b(x)\, h(\Phi)\in L^1(B_{1/2})$. This completes the proof of Lemma~\ref{radsy}.
\end{proof}

\section{Basic tools} \label{basic}

Throughout this section, let Assumptions $({\mathbf A_1})$--$({\mathbf A_3})$ hold.
Our aim is to prove the basic tools used in this paper: \emph{a priori} estimates 
(Lemma~\ref{apr}), a spherical Harnack-type inequality (Lemma~\ref{lemharnack}) and a regularity result (Lemma~\ref{reg}).

\begin{lemma}[\textit{A priori} estimates] \label{apr} For any $r_0\in (0,1/2)$, there exists a positive constant 
$C$, depending on $r_0$, such that \eqref{bst} holds for every
positive (sub-)solution of \eqref{une1}. 
\end{lemma}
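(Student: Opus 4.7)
The plan is to use the classical inverse-power test function $\phi = \eta^p u^{-(p-1)}$ localized in an annular ball away from the origin. For any $x_0 \in B_{r_0}\setminus\{0\}$ I will set $\rho:=|x_0|/4$, so that $B(x_0,2\rho)$ lies in the annulus $|x_0|/2 \le |x| \le 3|x_0|/2$, a compact subset of $B^*$ on which $u$ is positive and $C^1$. I will take $\eta\in C_c^1(B(x_0,2\rho))$ as a standard cutoff with $\eta\equiv 1$ on $B(x_0,\rho)$, $0\le\eta\le 1$, and $|\nabla\eta|\le C/\rho$; then $\phi$ is an admissible test function in \eqref{umn}.

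Substituting $\phi$ into the sub-solution inequality and expanding $\nabla\phi = p\eta^{p-1}u^{-(p-1)}\nabla\eta-(p-1)\eta^p u^{-p}\nabla u$, I will apply Young's inequality with a small parameter to absorb the cross term $p\int \A(|x|)|\nabla u|^{p-2}\nabla u\cdot\nabla\eta\,\eta^{p-1}/u^{p-1}\,dx$ into the Caccioppoli-type positive piece $(p-1)\int \A(|x|)|\nabla u|^p\eta^p/u^p\,dx$. This will yield the integral bound
\[
\int_{B(x_0,\rho)} b(x)\,\frac{h(u)}{u^{p-1}}\,dx \;\le\; C \int_{B(x_0,2\rho)} \A(|x|)\,|\nabla\eta|^p\,dx.
\]
Next I will exploit the slow variation of $L_\A$ and $L_b$ supplied by $({\mathbf A_1})$ and $({\mathbf A_3})$: by the uniform convergence theorem for slowly varying functions, since $|x|/|x_0|\in[1/2,3/2]$ on $B(x_0,2\rho)$, one has $\A(|x|)\le C_1\A(|x_0|)$ and $b(x_0)\le C_2 b(x)$ there. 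Combined with $|\nabla\eta|^p\le C\rho^{-p}$, $\rho\sim|x_0|$ and $|B(x_0,\rho)|\sim|x_0|^N$, this will give the averaged estimate
\[
\frac{1}{|x_0|^N}\int_{B(x_0,\rho)}\frac{h(u)}{u^{p-1}}\,dx \;\le\; C\,\frac{\A(|x_0|)}{b(x_0)\,|x_0|^p}.
\]

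The main obstacle is the final passage from this $L^1$-average to the pointwise value at $x_0$. Since $B(x_0,2\rho)$ is separated from the origin, the operator ${\rm div}(\A(|x|)|\nabla\cdot|^{p-2}\nabla\cdot)$ is uniformly $p$-elliptic on this set, with $C^1$ coefficients bounded above and below, so the classical local theory (Serrin, Trudinger, DiBenedetto) will supply a full Harnack inequality for positive solutions and a weak Harnack together with an $L^\infty$ bound for positive sub-solutions on $B(x_0,\rho/2)$. Combined with the structural properties of $t\mapsto h(t)/t^{p-1}$\,---\,continuity on $(0,\infty)$, boundedness near $0$ by $({\mathbf A_2})$, and the asymptotic $\sim t^{q-p+1}L_h(t)$ for large $t$ from $({\mathbf A_3})$, which in particular makes it eventually increasing since $q>p-1$\,---\,these will imply that $h(u(\cdot))/u(\cdot)^{p-1}$ oscillates by at most a universal multiplicative constant on $B(x_0,\rho/2)$. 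Substituting into the averaged bound and transferring to $x_0$ will then deliver \eqref{bst} with a constant depending only on $r_0$, $N$, $p$ and the structural data.
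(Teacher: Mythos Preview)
Your argument breaks down at the very first step. After substituting $\phi=\eta^p u^{-(p-1)}$ into the sub-solution inequality, the identity you obtain is
\[
\int b\,\frac{h(u)}{u^{p-1}}\,\eta^p\,dx \;\le\;(p-1)\int \A(|x|)\,\frac{|\nabla u|^p}{u^p}\,\eta^p\,dx
\;-\;p\int \A(|x|)\,|\nabla u|^{p-2}\nabla u\cdot\nabla\eta\,\frac{\eta^{p-1}}{u^{p-1}}\,dx.
\]
The ``Caccioppoli-type positive piece'' $(p-1)\int \A|\nabla u|^p\eta^p/u^p$ sits on the \emph{right} with a positive sign, not on the left; Young's inequality on the cross term only adds a further $\epsilon\int \A|\nabla u|^p\eta^p/u^p$ to the right-hand side. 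Nothing absorbs anything, and you are left with an uncontrolled gradient term. The claimed bound $\int_{B(x_0,\rho)} b\,h(u)/u^{p-1}\le C\int \A|\nabla\eta|^p$ simply does not follow from this test function for sub-solutions.

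Even setting this aside, your pointwise step is circular. Any Harnack inequality of Serrin--Trudinger type for \eqref{une1equiv} requires control of the zeroth-order coefficient $b(x)h(u)/(\A(|x|)u^{p-1})$ in a scale-invariant norm; that coefficient is precisely the quantity $|x|^{-p}$ times what you are trying to bound. In the paper the logical order is the reverse: Lemma~\ref{lemharnack} is \emph{derived from} Lemma~\ref{apr} via \eqref{c1e}. Invoking Harnack here assumes the conclusion.

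The paper's proof avoids both issues by a comparison argument of Keller--Osserman type: for each $x_0$ with $0<|x_0|\le r_0$ it builds an explicit radial super-solution $S=S_{x_0}$ on $B_\rho(x_0)$, $\rho=|x_0|/2$, defined implicitly by
\[
\int_{S(x)}^\infty t^{-\frac{q+1}{p}}[L_h(t)]^{-\frac{1}{p}}\,dt
= c\,\zeta(|x_0|)\Bigl[1-\bigl(|x-x_0|/\rho\bigr)^{p'}\Bigr],
\]
with $\zeta$ as in \eqref{zf}. One checks directly that $S$ is a super-solution (for $c$ small enough, uniformly in $x_0$) and that $S\to\infty$ on $\partial B_\rho(x_0)$; the comparison principle (Lemma~\ref{cp}) then gives $u(x_0)\le S(x_0)$, and the explicit formula for $S(x_0)$ yields \eqref{bst}. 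No Harnack inequality and no integral-to-pointwise passage are needed.
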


\begin{proof} Fix $x_0\in \RR^N$ with $0<|x_0|\leq r_0$. We denote $\rho:=|x_0|/2$ and $p':=p/(p-1)$. Let 
\neweq{zf} \zeta(r):=r^{\frac{\sigma-\vartheta+p}{p}} \left[\frac{L_b(r)}{L_\A(r)}\right]^{\frac{1}{p}}\ \text{ for } r\in (0,r_0] \ 
\text{ and } f(t):=\frac{t^{1-\frac{q+1}{p}} [L_h(t)]^{-\frac{1}{p}}}{\int_t^\infty \xi^{-\frac{q+1}{p}} [L_h(\xi)]^{-\frac{1}{p}} \,d\xi}\quad\text{for }t>0\ \
\text{ large}. 
\endeq Let $c>0$ be a positive constant.  
We define   
$S=S_{x_0}:B_{\rho} (x_0)\to \RR$ by 
\begin{equation} \label{sde}
\int_{S(x)}^\infty t^{-\frac{q+1}{p}}\left[L_h(t)\right]^{-\frac{1}{p}}\,dt   =c \zeta(|x_0|)
\left[
1-\left(\frac{|x-x_0|}{\rho}\right)^{p'}\right] \quad \text{for every } x\in B_\rho(x_0).
\end{equation}

{\bf Claim:} {\em There exists a small positive constant $c$ depending on $r_0$, but independent of $x_0$ 
such that 
the function $S$ defined by \eqref{sde} is a super-solution of \eqref{une1} in $B_\rho(x_0)$, namely for $h_1$ as in Remark~\ref{asb}, it holds} 
\begin{equation} \label{super}
{\rm div}\,(\mathcal A(|x|)\, |\nabla S|^{p-2} \nabla S)  \leq b(x)\, h_1(S) \quad \text{in } B_\rho(x_0).
\end{equation}

Suppose the claim holds.
Since $S(x)\to \infty$ as $|x-x_0|\to \rho$, 
by the comparison principle of Lemma~\ref{cp}, we find that 
$u \leq S$ in $ B_{\rho} (x_0)$. In particular, we have $u(x_0)\leq S(x_0)$.
Since $\zeta$ is regularly varying at $0^+$ with positive index
$(p+\sigma-\vartheta)/p$, we have $\lim_{r\to 0^+}\zeta(r)= 0$ so that $\sup_{0<r\leq r_0}  \zeta(r)<\infty$. 
Since the right-hand side of \eqref{sde} is bounded from above by 
$c \sup_{0<r\leq r_0}  \zeta(r)$, for every $M>0$ there exists a small positive constant $c$ (depending on $M$ and $r_0$)
such that $S\geq M$ in $B_\rho(x_0)$ for every $0<|x_0|\leq r_0$. 
Using \eqref{zf} and 
 \eqref{sde}, we find that 
 \neweq{ooo} [S(x_0)]^{q-p+1} \,L_h(S(x_0))= \left[c \zeta(|x_0|) f(S(x_0))\right]^{-p}. 
 \endeq
 We fix $M>0$ as large as needed. Let $h_1$ and $h_2$ be as in Remark~\ref{asb} of Appendix~\ref{appe}. We can thus assume that 
$h_2(t)\leq 2t^q L_h(t)$ for all $t\geq M$. 
By Karamata's Theorem in Appendix~\ref{appe}, we have $\lim_{t\to \infty} f(t)=(q-p+1)/p>0$. 
Since $u(x_0)\leq S(x_0)$, using \eqref{ooo} and \eqref{9.4}, we can find a positive constant $C_1=C_1(r_0)$ independent of $x_0$ such that
 \neweq{few}
  \frac{|x_0|^p b(x_0)}{\mathcal A(|x_0|)} \frac{h(u(x_0))}{[u(x_0)]^{p-1}}\leq 
    \frac{|x_0|^p b(x_0)}{\mathcal A(|x_0|)} \frac{h_2(S(x_0))}{[S(x_0)]^{p-1}}\leq \frac{2}{[c f(S(x_0))]^p}\,\frac{b(x_0)}{|x_0|^\sigma L_b(|x_0|)}\leq C_1.
 \endeq  
 Since \eqref{few} holds for every $0<|x_0|\leq r_0$, 
we conclude the assertion of Lemma~\ref{apr}. 

\vspace{0.2cm}
{\bf Proof of Claim.} 
By \eqref{sde}, we find that 
\neweq{ddif}
|\nabla S(x)|^{p-2}\nabla S(x)=\left(cp'\right)^{p-1} \rho^{-p} \left[\zeta(|x_0|)\right]^{p-1} 
\left[S^{q+1}(x) \,L_h(S(x))\right]^{\frac{1}{p'}} (x-x_0)\quad \text{in } B_\rho(x_0).
\endeq

Using $f$ given by \eqref{zf}, we denote by $T_{x_0}(x)$ the following quantity 
\neweq{tx0} \left(\frac{|x-x_0|}{\rho}\right)^{p'}\left( q+1+\frac{S(x) L_h'(S(x))}{L_h(S(x))}\right) + 
f(S(x))
 \left[
1-\left(\frac{|x-x_0|}{\rho}\right)^{p'}
\right] \left(N+\frac{|x| \mathcal A'(|x|)}{\mathcal A(|x|)} \, \frac{(x-x_0)\cdot x}{|x|^2}\right).
\endeq
With $T_{x_0}(x)$ given by \eqref{tx0}, we derive that 
\neweq{slo} 
 {\rm div}\,(\mathcal A(|x|)\, |\nabla S|^{p-2} \nabla S) =\left(p'\right)^{p-1} (2c)^{p} \left(\frac{|x|}{|x_0|}\right)^{\vartheta}\,
 \frac{L_\A(|x|)}{L_\A(|x_0|)}
  \,|x_0|^\sigma L_b(|x_0|)\, S^q L_h(S)\,T_{x_0}(x) . 
\endeq
By Assumption~$({\mathbf A_1})$ and Remark~\ref{asb}
in Appendix~\ref{appe}, we have $\lim_{r\to 0^+} r\A'(r)/\A(r)=\vartheta$ and $\lim_{t\to \infty} t L_h'(t)/L_h(t)=0$. 
Recall that $\lim_{t\to \infty} f(t)=(q-p+1)/p$. 
Moreover, by Proposition~\ref{pn} in Appendix~\ref{appe}, there exist positive constants $c_i$ $(0\leq i\leq 3)$ depending on $r_0$, but independent of $x_0$ such that 
 $$c_0 \,L_\A(|x_0|) \leq L_\A(|x|)\leq c_1\, L_\A(|x_0|)
 \ \ \text{and  }\ 
c_2 \,L_b(|x|)\leq  L_b(|x_0|)\leq c_3 \,L_b(|x|) 
 $$
 for every $x,x_0$ such that $0<|x_0|\leq r_0$ and $ |x|/|x_0|\in [1/2, 3/2]$. Thus, using \eqref{aoz} and \eqref{slo}, we conclude \eqref{super} 
by taking in \eqref{sde} a small constant $c>0$ depending on $r_0$, but independent of $x_0$. This completes the proof of Lemma~\ref{apr}. 
\end{proof}

\begin{lemma}[Harnack-type inequality] \label{lemharnack}
Fix $r_0 \in (0,1/2)$. There exists a positive constant $K$ (depending on $p$, $N$ and $r_0$)
such that 
for every positive solution $u$ of \eqref{une1}, we have \eqref{eqharnack}. 
\end{lemma}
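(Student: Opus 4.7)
The plan is to carry out the classical localize–rescale–chain strategy. I would first reduce the assertion to a Harnack inequality on a single ball of radius comparable to $r$, then cover the sphere $S_r := \{|x|=r\}$ by a bounded-cardinality chain of such balls and iterate. Fix $x_0 \in S_r$ with $r \in (0,r_0/2]$, set $\rho := r/2$, and let $\tilde u(y) := u(x_0+\rho y)$ for $y \in B_1$. A direct change of variables based on $\nabla_x u = \rho^{-1}\nabla_y \tilde u$ shows that $\tilde u$ solves
\[
-\mathrm{div}_y\bigl[\tilde a(y)\,|\nabla_y \tilde u|^{p-2}\nabla_y \tilde u\bigr] + \tilde c(y)\,h(\tilde u) = 0 \quad \text{in } B_1,
\]
where
\[
\tilde a(y) := \frac{\A(|x_0+\rho y|)}{\A(|x_0|)}, \qquad \tilde c(y) := \frac{\rho^p\,b(x_0+\rho y)}{\A(|x_0|)}.
\]

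Next I would verify that this rescaled equation satisfies Serrin-type structure conditions with constants uniform in $r$ and $x_0$. Since $|x_0+\rho y| \in [r/2,3r/2]$ for every $y \in B_1$, the Potter-type bounds for slowly varying $L_\A$ and $L_b$ (Proposition~\ref{pn}) supply constants $0 < \alpha_1 \leq \tilde a(y) \leq \alpha_2$ depending only on $r_0$. Moreover, combining the \emph{a priori} estimate \eqref{bst} from Lemma~\ref{apr} with the same bounds on $\A$, I obtain
\[
\frac{\tilde c(y)\,h(\tilde u(y))}{[\tilde u(y)]^{p-1}} = \frac{\rho^p\,b(x_0+\rho y)}{\A(|x_0|)}\cdot\frac{h(u(x_0+\rho y))}{u(x_0+\rho y)^{p-1}} \leq \frac{C\,\rho^p\,\A(|x_0+\rho y|)}{\A(|x_0|)\,|x_0+\rho y|^p} \leq K_1,
\]
with $K_1$ depending only on $r_0$. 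Hence the rescaled equation fits the framework of \cite{serrin64,serrin65} uniformly, and Serrin's Harnack inequality for positive solutions of quasi-linear elliptic equations yields a constant $K_0 = K_0(N,p,r_0) > 0$ such that $\sup_{B_{1/2}} \tilde u \leq K_0 \inf_{B_{1/2}} \tilde u$. Undoing the rescaling gives
\[
\sup_{B_{r/4}(x_0)} u \leq K_0 \inf_{B_{r/4}(x_0)} u \quad \text{for every } x_0 \in S_r.
\]

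Finally, I would cover $S_r$ by a cyclically ordered chain of $n = n(N)$ balls $B_{r/4}(x_1),\dots,B_{r/4}(x_n)$ with $|x_i| = r$ and $B_{r/4}(x_i)\cap B_{r/4}(x_{i+1})\cap S_r \neq \emptyset$; such a covering exists with $n(N)$ depending only on dimension, because the scaling $x\mapsto x/r$ reduces the covering problem to the unit sphere. Iterating the local Harnack bound along the chain gives
\[
\max_{|x|=r} u(x) \leq K_0^{\,n(N)}\, \min_{|x|=r} u(x),
\]
so $K := K_0^{\,n(N)}$ depends only on $p$, $N$ and $r_0$, completing the proof.

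The main obstacle is the uniform verification in the middle step: without the \emph{a priori} estimate of Lemma~\ref{apr}, the zero-order coefficient $\tilde c(y)\,h(\tilde u)/\tilde u^{p-1}$ could degenerate as $r\to 0^+$, and the Harnack constant $K_0$ would blow up. The combination of Potter's bounds for $\A$, $b$ (to keep $\tilde a$ between two positive constants and to absorb $b$) together with \eqref{bst} (to tame $h(u)/u^{p-1}$) is exactly what produces a Harnack constant that is uniform in $r \in (0, r_0/2]$ and in $x_0 \in S_r$.
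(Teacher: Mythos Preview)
Your proposal is correct and follows essentially the same strategy as the paper: localize on a ball of radius comparable to $r$, use the \emph{a priori} estimate \eqref{bst} together with the regular variation of $\A$ to obtain uniform structure constants, apply a known quasilinear Harnack inequality, and chain along the sphere.

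There is one minor technical difference worth noting. The paper does not rescale; instead it expands the divergence to rewrite \eqref{une1} as a pure $p$-Laplacian with lower-order terms,
\[
-\mathrm{div}(|\nabla u|^{p-2}\nabla u)+\frac{\A'(|x|)}{\A(|x|)}\,|\nabla u|^{p-2}\frac{\nabla u\cdot x}{|x|}+\frac{b(x)\,h(u)}{\A(|x|)\,u^{p-1}}\,u^{p-1}=0,
\]
and then controls the first-order coefficient $b_1=|\A'|/\A$ via \eqref{a3} and the zero-order coefficient via Lemma~\ref{apr}, before invoking Trudinger's Harnack inequality \cite{ntrud}. Your rescaling keeps the equation in divergence form with bounded elliptic coefficient $\tilde a$, so no gradient term appears and you can appeal directly to Serrin \cite{serrin64,serrin65}. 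Both routes are standard and yield the same conclusion; your version has the slight advantage of not needing to track the gradient coefficient separately, while the paper's version avoids the change of variables. The covering steps (your $n(N)$ balls of radius $r/4$ versus the paper's ten balls of radius $r/6$) are interchangeable.
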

\begin{proof}

We first observe that \eqref{une1} is equivalent to 
\begin{equation} \label{une1equiv}
-{\rm div}\,(|\nabla u|^{p-2} \nabla u)+ \frac{\A'(|x|)}{\A(|x|)} \,|\nabla u|^{p-2}\frac{\nabla u \cdot x}{|x|}+\frac{b(x)\,h(u)}{\A(|x|)\,u^{p-1}}\,u^{p-1}=0 \quad \text{in } B^*.
\end{equation}
Let $b_1$ and $b_2$ denote two non-negative functions as follows
\begin{equation} \label{bud1} b_1(x):=    \frac{|\A'(|x|)|}{\A(|x|)} \ \ \text{and} \ \ 
[b_2(x)]^p:=\frac{b(x)\,h(u)}{\A(|x|)\,u^{p-1}}\quad \text{for } 0<|x|\leq  r_0. 
\end{equation}
By \eqref{a3} and Lemma~\ref{apr}, there exists a positive constant $C_1$, depending on $r_0$, such that 
\begin{equation} \label{c1e} |x|\, b_1(x)\leq C_1\ \ \text{and}\ \ 
|x| \,b_2(x) \leq C_1\quad \text{for all } 0<|x|\leq r_0.  
\end{equation}
Fix $x_0 \in \RR^N$ such that $0<|x_0|\leq r_0/2$ and set $\rho:=|x_0|/2$. We use $\mu$ to denote $$\mu=\mu_{x_0}:= \max\{\|b_1\|_{{L^\infty}(B_\rho(x_0))}, \|b_2\|_{{L^\infty}(B_\rho(x_0))} \} .$$ 
Since $ \rho\leq |x|$ for every $x\in B_\rho(x_0)$, from \eqref{c1e} it follows that
\begin{equation} \label{rhom} \rho \mu \leq C_1\quad \text{for every } x\in B_\rho(x_0). \end{equation}
We apply the Harnack inequality of \cite[Theorem 1.1]{ntrud} for \eqref{une1equiv} on $B_{|x_0|/2}(x_0)$ where the structure conditions in (1.2) and (1.3) of \cite{ntrud} are satisfied with
$a_0=1$ and $a_i =b_0=b_3=0$ for $i\in \{1,2,3,4\}$. Hence, 
there exists a positive constant $k$, depending only on $p$, $N$ and $ \rho \mu$, such that
\begin{equation} \label{preharn}
\sup_{x \in B_{\rho/3}(x_0)} u(x) \leq k \inf_{x \in B_{\rho/3}(x_0)} u(x).
\end{equation}
By the covering argument in \cite{fav}, any two points $x_1$ and $x_2$ in $\RR^N$ such that $0<|x_1|=|x_2|\le r_0 /2$ can be joined by ten overlapping balls of radius $|x_1|/6$ with centres positioned on $\partial B_{|x_1|}(0)$. Thus, by \eqref{rhom} and \eqref{preharn}, we obtain \eqref{eqharnack} with $K=k ^{10}$, where $K$ is a positive constant depending on $p$, $N$ and $r_0$. 
\end{proof}

\begin{lemma}[A regularity result] \label{reg}
Fix $r_0\in (0,1/4)$ and $\delta \geq 0$. Let $g\in C(0,1)$ be a positive function such that  
$g$ is regularly varying at $0$ with index $-\delta$. 
Suppose that $u$ is a positive solution of \eqref{une1} and $C_0>0$ is a constant such that
\begin{equation} \label{tolone}
0<u(x)\le C_0\,g(|x|) \quad \text{for } 0<|x|<2r_0.
\end{equation}
Then there exist positive constants $C>0$ and $\alpha \in (0,1)$ such that
\begin{equation} \label{toltwo}
|\nabla u(x)|\le C\,\frac{g(|x|)}{|x|} \quad \text{and} \quad |\nabla u(x)-\nabla u(x')| \le C\, \frac{g(|x|)}{|x|^{1+\alpha}} |x-x'|^\alpha
\end{equation}
for any $x$, $x'$ in $\RR^N$ satisfying $0<|x| \le |x'|<r_0$.
\end{lemma}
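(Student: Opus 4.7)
The plan is to reduce \eqref{toltwo} to the classical interior $C^{1,\alpha}$-regularity theory for quasi-linear elliptic equations of $p$-Laplacian type via a rescaling argument. I fix $x_0$ with $0 < |x_0| < r_0$, set $\rho := |x_0|/4$ and $M := g(|x_0|)$, and define
$$v(y) := \frac{u(x_0 + \rho y)}{M}, \qquad y \in B_1.$$
A direct chain-rule computation shows that $v$ solves
$$\mathrm{div}_y \bigl( \tilde{\A}(y)\, |\nabla v|^{p-2} \nabla v \bigr) = \tilde B(y) \qquad \text{in } B_1,$$
where $\tilde{\A}(y) := \A(|x_0+\rho y|)/\A(|x_0|)$ and $\tilde{B}(y) := \rho^{p}\,\A(|x_0|)^{-1} M^{-(p-1)}\, b(x_0+\rho y)\, h(M v(y))$.

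Because $|x_0+\rho y|/|x_0|\in [3/4, 5/4]$ for every $y \in B_1$, Proposition~\ref{pn} in Appendix~\ref{appe} applied to the slowly varying factor $L_\A$ gives uniform two-sided bounds $c_1 \le \tilde{\A}(y) \le c_2$ together with $|\nabla_y \tilde{\A}(y)| \le c_3$, where the $c_i$ depend only on $p$ and $r_0$. The hypothesis \eqref{tolone} combined with the same slow-variation comparison for $g$ yields $0 < v(y) \le C_0'$ on $B_1$, and the \emph{a priori} estimate \eqref{bst} of Lemma~\ref{apr} evaluated at the point $x_0+\rho y$ reduces the source to $|\tilde B(y)| \le C\, v(y)^{p-1} \le C'$, again uniformly in $x_0$. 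With these uniform structural bounds in hand, the classical interior $C^{1,\alpha}$ estimates for equations of $p$-Laplacian type (Tolksdorf, DiBenedetto, Lieberman) produce $\alpha \in (0,1)$ and $C>0$, depending only on $p$, $N$ and the above data, such that $\|v\|_{C^{1,\alpha}(\overline{B_{1/2}})} \le C$. Unwinding the scaling (note $\nabla u(x) = (M/\rho)\,\nabla v((x-x_0)/\rho)$), and taking $x_0 = x$ together with one further application of Proposition~\ref{pn} to replace $g(|x_0|)$ by $g(|x|)$ up to a multiplicative constant, yields both bounds in \eqref{toltwo} for every $x'$ with $|x-x'| \le |x|/8$.

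It remains to cover the complementary range $|x-x'| > |x|/8$ in the Hölder estimate. Here the triangle inequality $|\nabla u(x) - \nabla u(x')| \le |\nabla u(x)| + |\nabla u(x')|$, combined with the pointwise gradient bound already established, gives $|\nabla u(x) - \nabla u(x')| \le C\,g(|x|)/|x| + C\,g(|x'|)/|x'|$; since $|x-x'|^\alpha \ge (|x|/8)^\alpha$ in this range and $|x| \le |x'| < r_0$, both summands are absorbed into $C\,g(|x|)/|x|^{1+\alpha}\,|x-x'|^\alpha$ after one final invocation of the Potter-type comparisons from Proposition~\ref{pn} to bound $g(|x'|)/|x'|$ in terms of $g(|x|)/|x|$. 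The main technical obstacle throughout is ensuring that every constant produced by the regularity theory applied to the rescaled problem is genuinely independent of $x_0$; this is precisely where Lemma~\ref{apr} (to bound $\tilde B$) and Proposition~\ref{pn} (to control $\tilde{\A}$ and to compare $g$ at nearby scales) intervene decisively.
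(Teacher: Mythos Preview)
Your proof is correct and takes a genuinely different route from the paper's. The paper first expands the divergence, rewriting \eqref{une1} as the pure $p$-Laplacian plus a lower-order term $B$ that contains a gradient contribution $\frac{\A'(|x|)}{\A(|x|)}|\nabla u|^{p-2}\nabla u\cdot x/|x|$. This creates a circularity: to bound the rescaled $B_\beta$ in $L^\infty$ one already needs the gradient estimate. The paper therefore splits the argument into two steps, proving the pointwise gradient bound first via a separate ball-rescaling $v(y)=u(x_0+\rho y)/g(|x_0|)$ and Tolksdorf's Theorem~1 (where the natural-growth structure $|\tilde B|\le A_1|\nabla v|^{p-1}+A_2 v^{p-1}$ suffices), and only then returning to the annular rescaling $\Psi_\beta(\xi)=u(\beta\xi)/g(\beta)$ on $\{1<|\xi|<7\}$ to obtain the H\"older estimate.

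You avoid this two-step detour by \emph{not} expanding the divergence: you keep the weight in the principal part as $\tilde\A(y)=\A(|x_0+\rho y|)/\A(|x_0|)$ and observe that $(\mathbf A_1)$ gives uniform ellipticity and a uniform Lipschitz bound on $\tilde\A$ via $r\A'(r)/\A(r)\to\vartheta$. The right-hand side $\tilde B$ is then gradient-free and is controlled directly by Lemma~\ref{apr}, so a single application of the $C^{1,\alpha}$ theory for $p$-Laplacian type operators with H\"older (indeed Lipschitz) principal coefficients (Lieberman/DiBenedetto, or Tolksdorf's general structure) yields both inequalities in one stroke. What this buys you is a cleaner, one-pass argument; what the paper's route buys is that it invokes only Tolksdorf's theorem for the pure $p$-Laplacian with natural-growth lower-order terms, avoiding any appeal to variable principal coefficients. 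Your treatment of the far range $|x-x'|>|x|/8$ via the triangle inequality and Potter-type bounds (using that $g(r)/r\in RV_{-\delta-1}(0+)$ with $-\delta-1<0$) is also fine and makes explicit a step the paper leaves implicit.
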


\begin{proof}
We use an argument close to \cite[Lemma~4.1]{cd}, which is similar to \cite[Lemma~1.1]{fav} (see also \cite[Lemma~3]{BCCT}). 
There is, however, one essential difference with respect to the derivation of the first inequality in \eqref{toltwo}. 
We show below the main modifications compared with \cite[Lemma~4.1]{cd}. 

Using \eqref{une1equiv} and 
defining $\Psi_\beta$ as in (4.5) of \cite{cd}, that is 
$\Psi_\beta(\xi) := u(\beta \xi)/g(\beta)$ for $\xi \in \bar{\Gamma}$, where $\beta \in (0,r_0/6)$ is fixed,   
we see 
that $\Psi_\beta$ satisfies an equation of the form (4.3) of \cite{cd}, namely
\begin{equation} \label{eqtol}
- {\rm div}\,(|\nabla \Psi_\beta|^{p-2} \nabla \Psi_\beta)+B_\beta=0 \quad \text{in } \Gamma, \quad \text{where } \Gamma:=\{y\in \RR^N : 1<|y|<7\}.
\end{equation}
However, instead of (4.7) in \cite{cd}, the expression of $B_\beta$ is more complicated here, involving a gradient term, namely 
\begin{equation} \label{bbe} B_\beta(\xi):= \frac{\beta^p}{[g(\beta)]^{p-1}}\, b(\beta \xi) \frac{h(u(\beta \xi))}{\A(\beta|\xi|)}-
\frac{\beta \A'(\beta|\xi|)}{\A(\beta |\xi|)} \,|\nabla \Psi_\beta|^{p-2}\,\frac{\nabla \Psi_\beta(\xi) \cdot \xi}{|\xi|}\ \ \text{for }\xi\in \Gamma. 
\end{equation}   

{\sc Claim:} \emph{The functions $\Psi_\beta$ and $B_\beta$ are in $L^\infty (\Gamma)$ with their $L^\infty$-norms bounded above by 
a positive constant independent of $\beta\in (0,r_0/6)$.}

\vspace{0.2cm}
{\em Proof of claim.}  
For $\Psi_\beta$, we can proceed exactly as in \cite{cd}. We thus need to prove the claim only for $B_\beta$. 
Using Lemma~\ref{apr} and \eqref{tolone}, jointly with (4.10) in \cite{cd}, we find that the $L^\infty(\Gamma)$-norm of the 
\emph{first term} in the right-hand side of \eqref{bbe} is bounded above by a constant independent of $\beta$.  

Assume for now that the first inequality in \eqref{toltwo} is proved. Then we can infer that 
$|\nabla \Psi_\beta(\xi)|\leq C g(\beta |\xi|)/g(\beta)$ for every $\xi\in \Gamma$. 
Hence, using \eqref{c1e}, as well as (4.10) in \cite{cd}, we could conclude the claim for $B_\beta$ given by \eqref{bbe}.  

\vspace{0.2cm}
Since $B \in L^\infty (\Gamma)$ and $\Psi \in L^\infty (\Gamma) \cap W^{1,p}(\Gamma)$ is a weak solution of \eqref{eqtol},
from the $C^{1,\alpha}$-regularity result of Tolksdorf \cite{tol}, we conclude that 
there exist constants $\alpha = \alpha(N,p) \in (0,1)$ and $\tilde{C}=\tilde{C}\left(N,p,\|\Psi\|_{L^\infty (\Gamma)}, \|B\|_{L^\infty (\Gamma)}\right) >0$ such that
\begin{equation} \label{tolresult}
\|\nabla \Psi \|_{C^{0,\alpha}(\Gamma^*)} \le \tilde{C}, \quad \text{where } \Gamma^* := \{y \in \RR^N : 2<|y|<6 \}.
\end{equation}
This fact is then used to derive the second inequality in \eqref{bbe} (see \cite{cd} for details).

\vspace{0.2cm}
{\em Proof of the first inequality in \eqref{toltwo}.} Our proof here is 
 different from both \cite[Lemma~4.1]{cd} and \cite[Lemma~3]{BCCT}. 
We require a new argument to that of \cite{cd} 
as we used the first inequality in \eqref{toltwo} to derive \eqref{tolresult}. The ideas in \cite{BCCT} work for the special 
case $p=2$. In our general situation, we apply Theorem~1 in Tolksdorf \cite{tol} for the function
$v$ in \eqref{res}.       
More precisely, let $x_0\in \RR^N$ be fixed such that $0<|x_0|\leq r_0$ and set $\rho:=|x_0|/2$. 
We define 
$v=v_{x_0}:B_1\to (0,\infty)$ by
\begin{equation} \label{res}
v(y):=\frac{u(x_0+\rho y)}{g(|x_0|)}\quad \text{for every } y\in B_1.
\end{equation}
Since $u$ satisfies \eqref{une1equiv}, by using the formula for $\nabla v$ derived from \eqref{res}, that is  
\begin{equation} \label{view} \nabla v(y)=\frac{\rho}{g(|x_0|)} (\nabla u)(x_0+\rho y) \quad \text{for } y\in B_1,
\end{equation} we obtain that $v$ is a positive solution of the following equation
$$ -{\rm div}\,( |\nabla v|^{p-2} \nabla v) +\tilde B(y,v,\nabla v) =0 \quad \text{in } B_1,
$$
where we define $\tilde B(y,v,\nabla v)$ to be
$$  \tilde B(y,v,\nabla v)=-\frac{\rho \,\A'(|x_0+\rho y|)}{\A(|x_0+\rho y|)} |\nabla v|^{p-2} \frac{\nabla v(y)
\cdot (x_0+\rho y)}{|x_0+\rho y|}+\rho^{\,p} \frac{b(x_0+\rho y)\,h(v)}{\A(|x_0+\rho y|)\,v^{p-1}}\,v^{p-1}.
$$
Since $|x_0+\rho y|\in [\rho,3\rho]$ for all $y\in B_1$, in view of \eqref{a3} and \eqref{c1e}, we find that
\begin{equation} \label{bun} |\tilde B(y,v,\nabla v)| \leq A_1 |\nabla v|^{p-1} +A_2\, v^{p-1}  
\end{equation} for some positive constants $A_1$ and $A_2$, which depend on $r_0$, but are independent of $x_0$. 
Using the assumptions on $g$, namely $g$ is regularly varying at $0$, we obtain (similar to (4.10) in \cite{cd}) that 
$$ \underline c\, g(|x_0|) \leq  g(|x_0+\rho y|)  \leq \overline c \,g(|x_0|) \quad \text{for all } y\in B_1,$$
where $\underline c$ and $\overline c$ are positive constants, which depend on 
$r_0$, but are independent of $x_0$ satisfying $0<|x_0|< r_0$. 
Moreover, from \eqref{tolone} and \eqref{res}, we deduce that 
$$ v(y)\leq \overline{c} \,C_0 \quad \text{for every } y\in B_1. $$
Thus, in view of \eqref{bun}, we can find a positive constant $A_3=A_3(r_0)$, which is independent of $x_0$ such that 
$$ |\tilde B(y,v, \eta)| \leq A_3 (1+|\eta|)^p \quad \text{for all } y\in B_1\ \text{and } \eta\in \RR^N. 
$$
Hence, we can apply Theorem~1 in Tolksdorf \cite{tol} to obtain a constant $A_4$, which 
depends on $N$, $p$ and $A_3$, but is independent
of $x_0$, such that $|\nabla v(0)|\leq A_4$. This, jointly with \eqref{view}, proves that 
$$ |\nabla u(x_0)|\leq 2 A_4 \,\frac{g(|x_0|)}{|x_0|}\quad \text{for every } 0<|x_0|<r.$$ 
This completes the proof of Lemma~\ref{reg}. 
\end{proof}

\section{Proof of Theorem~\ref{wsol}: Existence and uniqueness} \label{weaksol}
Let Assumptions $({\mathbf A_1})$--$({\mathbf A_3})$ hold. Let $h$ be non-decreasing on $[0,\infty)$ and $g \in C^1 (\partial B_1)$ be a non-negative function. We study the existence of solutions for the following problem
\neweq{probe2} 
\left\{ \begin{aligned}
& {\rm div}\,(\mathcal A(|x|)\, |\nabla u|^{p-2} \nabla u) =b(x) \,h(u)\quad \text{in } B^*:=B_1\setminus\{0\},\\
&  \lim_{|x|\rightarrow 0} \frac{u(x)}{\Phi(x)}=\lambda, \quad u\big|_{\partial B_1}=g, \quad  u>0 \quad \text{in } B^*. 
\end{aligned}
\right.
\endeq

We treat separately the following cases: $\lambda=0$, $\lambda\in (0,\infty)$ and $\lambda=\infty$. 
For the construction of a solution of \eqref{probe2}, 
we adapt ideas from \cite[Theorem~1.2]{cd} (where $\A=1$), see also 
\cite[Proposition~5]{BCCT}, where $p=2$, $b=1$ and $h(t)=t^q$.  
We denote $C_0:=\max_{|x|=1} g(x)$. For every $n\geq 2$ and $0\leq \lambda<\infty$, we consider the auxiliary problem 
\begin{align} \label{ham}
 \begin{cases}
  &{\rm div}\,(\mathcal A(|x|)\, |\nabla u|^{p-2} \nabla u)= b(x)\,h(u) \quad \text{in } D_n:=B_1 \setminus \overline{B_{1/n}}, \\
  &u(x) =  \lambda\, \Phi(|x|) + C_0 \quad \text{for } |x|=1/n,\\
  & u \, \big|_{\partial B_1}=g. 
 \end{cases}
\end{align}
For $\lambda=0$, we further assume that $g \nequiv 0$ on $\partial B_1$. 
By the method of sub-super-solutions and Lemma~\ref{cp}, the problem \eqref{ham} admits a unique non-negative solution $u_{n,\lambda,g}$, which is continuous on $\overline  {D_n}$. For simplicity, whenever $\lambda$ and $g$ are fixed, we simply write $u_n$ instead of $u_{n,\lambda,g}$.    
By the strong maximum principle (see Theorem~2.5.1 of \cite{PS}), we see that $u_n$   
positive in $D_n$.  
Moreover, by Lemma~\ref{cp}, we infer that 
\neweq{mmi}0< u_{n+1}\leq u_n\leq \lambda \Phi(|x|)+ C_0 \quad \text{in  } D_n.\endeq
By Lemma~\ref{reg}, we have that, up to a subsequence,  
$u_n\to u_{\lambda,g}$ in $C^{1}_{\rm loc} (B^*)$ and, moreover,  for some $\alpha\in (0,1)$, we find that
$u_{\lambda,g}$  is a non-negative $C^{1,\alpha}_{\rm loc}(B^*)\cap C(\overline{B_1}\setminus\{0\})$-solution of the problem
\neweq{probe} 
\left\{ \begin{aligned}
& {\rm div}\,(\mathcal A(|x|)\, |\nabla u|^{p-2} \nabla u) =b(x) \,h(u)\quad \text{in } B^*:=B_1\setminus\{0\},\\
&u\big|_{\partial B_1}=g. 
\end{aligned}
\right.
\endeq
By the strong maximum principle, $u_{\lambda,g}$ is positive in $B^*$ (using here that $g\not\equiv 0$ on $\partial B_1$ when $\lambda=0$).   
From \eqref{mmi}, we find that $\limsup_{|x|\to 0} u_{\lambda,g}(x)/\Phi(|x|)\leq \lambda$. In particular, the problem  \eqref{probe2} with $\lambda=0$  
admits $u_{\lambda,g} $ as a solution. 

\vspace{0.2cm} 
{\em Proof of Theorem~\ref{wsol}(i).} It remains to show the uniqueness of the 
solution of \eqref{probe2} with $\lambda=0$. 
Let $u_1$ and $u_2$ be two solutions of \eqref{probe2} with $\lambda=0$. To show that $u_1=u_2$ in $B^*$, we proceed as in Proposition~4 in \cite{BCCT} 
with modifications appearing here due to our more general setting.   
By Lemma~\ref{remcon}, $u_1$ and $u_2$ can be extended by continuity at $0$. 
Since $u_1$, $u_2 \in C^1(B^*) \cap C(\overline{B_1})$ with $u_1=u_2=g$ on $\partial B_1$, then $u_1=u_2$ in $B_1$ would be a consequence of the following claim.

\vspace{0.2cm}
{\em Claim:} We have $\nabla (u_1-u_2)(x)=0$ for all $x \in B^*$.
\vspace{0.2cm}

{\em Proof of Claim.}  Assume by contradiction that there exists $x_0 \in B^*$ such that $|\nabla (u_1-u_2)(x_0)|>0$. We fix $r_0$ small such that  $0<r_0< \min\{1-|x_0|,|x_0|\}$, which ensures that $\overline{B_{r_0}(x_0)} \subset B^*$. Since $u_1-u_2 \in C^1(B^*)$, by making $r_0$ smaller if necessary, we can assume that $|\nabla (u_1-u_2)(x)|>0$ on $\overline{B_{r_0}(x_0)}$ and thus $|\nabla u_1(x)|+|\nabla u_2(x)|>0$ on $\overline{B_{r_0}(x_0)}$. Hence, there exists a positive constant $c_0$ such that
\begin{equation} \label{anineq}
 (|\nabla u_1(x)| + |\nabla u_2(x)|)^{p-2} \, |\nabla (u_1-u_2)(x)|^2\ge c_0 \quad \text{for all } x \in \overline{B_{r_0}(x_0)}.
\end{equation}
By Proposition 17.3 in \cite[p. 235]{chipot}, we know that there exists a positive constant $c_p$ such that
\begin{equation} \label{chipeq}
(|\xi|^{p-2}\xi - |\eta|^{p-2}\eta)\cdot (\xi - \eta) \ge c_p\left(|\xi|+|\eta|\right)^{p-2}|\xi-\eta|^2 \quad \text{for every } \xi, \eta \in \RR^N. 
\end{equation}
Thus using \eqref{anineq} and \eqref{chipeq}, we find for all $x \in \overline{B_{r_0}(x_0)}$ that
\begin{equation} \label{defh} 
\mathcal{H} (x):= (|\nabla u_1(x)|^{p-2} \nabla u_1 (x) - |\nabla u_2 (x)|^{p-2} \nabla u_2(x)) \cdot \nabla (u_1-u_2)(x) \ge c_p c_0.
\end{equation}
For any $\varepsilon\in (0,1/2)$, we denote $D_\varepsilon:=B_1 \setminus \overline{B_\varepsilon}$. 
Let $w_\varepsilon$ be a non-decreasing and smooth function on $(0,\infty)$ such that 
\neweq{def-w}\left\{ \begin{aligned}
& w_\varepsilon(r)\in (0,1)&& \text{if } \varepsilon<r<2\varepsilon ,&\\ 
& w_{\varepsilon}(r)=1 && \text{if } r\geq 2\varepsilon,&\\
& w_\varepsilon(r)=0 &&\text{if } 0<r\leq \varepsilon.&
\end{aligned}\right. \endeq 

We choose $\varepsilon>0$ small such that $2\varepsilon<|x_0|-r_0$, which yields that
$ \overline{B_{r_0}(x_0)} \subseteq D_{2\varepsilon} \subset D_{\varepsilon}$. Since $w_\varepsilon(|x|)=1$ for all $x\in D_{2\varepsilon}$, by using \eqref{defh}, we arrive at
\neweq{inf} 
\int_{D_{\varepsilon}} w_\varepsilon (|x|)\, \A(|x|) \, \mathcal H(x)\, dx\geq 
\int_{B_{r_0}(x_0)} \A(|x|)\,\mathcal H(x)\,dx\geq c_p \,c_0\,\omega_N r_0^N\, \min_{x\in \overline{B_{r_0}(x_0)}} \A(|x|):=c_{p,\A}.
\endeq
Since $\A\in C(0,1]$ is a positive function and $\overline{B_{r_0}(x_0)} \subset B^*$, we then obtain that $c_{p,\A}$ is a positive constant. 

Observe that $u_1$, $u_2$ and $w_\varepsilon$ belong to $W^{1,p}(D_{\varepsilon})\cap 
L^\infty (D_{\varepsilon})$. We define $\phi_\varepsilon(x):=(u_1-u_2)(x)\,w_\varepsilon(|x|)$ for all $x\in B^*$. Since 
$\phi_\varepsilon|_{\partial D_{\varepsilon}}=0$, it follows by the product rule that $\phi_\varepsilon \in W_0^{1,p}(D_{\varepsilon})$. Using the density of $C_c^1(D_{\varepsilon})$  in 
$W_0^{1,p}(D_\varepsilon)$, we have  
\neweq{dis} 
\int_{D_\varepsilon} \mathcal A(|x|)\, |\nabla u_j|^{p-2} \nabla u_j\cdot \nabla \phi_\varepsilon\,dx +\int_{D_\varepsilon} 
b(x)\,h(u_j)\,\phi_\varepsilon\,dx=0 \ \text{with } j=1,2.
\endeq
In particular, by subtracting the relation in \eqref{dis} with $j=2$ from the one corresponding to $j=1$, we obtain that
\neweq{difs}
\int_{D_\varepsilon} w_\varepsilon (|x|)\, \A(|x|) \, \mathcal H(x)\, dx + \int_{D_\varepsilon} b(x)\left(h(u_1)-h(u_2)\right)\left(u_1-u_2\right) w_\varepsilon(|x|) \, dx =-K_\varepsilon, \endeq
where $\mathcal H$ is given by \eqref{defh} and 
$K_\varepsilon$ is defined by 
\neweq{def-K} K_\varepsilon=  
\int_{\varepsilon<|x|<2\varepsilon} |x|^\vartheta\,  L_\A(|x|)\, w_\varepsilon ' (|x|) \left(u_1-u_2\right) \left(|\nabla u_1|^{p-2} \nabla u_1 - |\nabla u_2|^{p-2} \nabla u_2\right) \cdot \frac{x}{|x|} \, dx. \endeq
Since $w_\varepsilon(2\varepsilon)=1$ and $w_\varepsilon(\varepsilon)=0$ (see \eqref{def-w}), we observe that 
$$ \mathcal L_\varepsilon:=\int_{\varepsilon<|x|<2\varepsilon}  |x|^{\vartheta-p+1}  L_\A(|x|)\, w_\varepsilon' (|x|) \,dx
=|\partial B_1| \int_{\varepsilon}^{2\varepsilon}  
r^{\vartheta+N-p}  L_\A(r)\, w_\varepsilon' (r) \,dr\leq |\partial B_1| \max_{r\in [\varepsilon,2\varepsilon]}
\{r^{\vartheta+N-p}\,L_\A(r)\}. $$
Using that $\vartheta+N-p>0$ and $L_\A$ is slowly varying at zero, we get that $\lim_{r\to 0^+} r^{\vartheta+N-p}  L_\A(r)=0$.
(In relation to Remark~\ref{equal}, we note that if $\vartheta+N-p=0$ and $\limsup_{r\to 0^+} L_\A(r)<\infty$, then we get that 
$\limsup_{\varepsilon\to 0^+} \mathcal L_\varepsilon \in (0,\infty)$.)
Thus, using \eqref{def-K}, jointly with $|x||\nabla u_j|\to 0$ as $|x|\to 0$ for $j=1,2$ (see Lemma~\ref{remcon}), we find that 
$$   |K_\varepsilon| \leq \left(\|u_1\|_{L^\infty(B_1)} +\|u_2\|_{L^\infty(B_1)} \right)\, \mathcal L_\varepsilon\,
 \max_{\varepsilon\leq |x|\leq 2\varepsilon}  |x|^{p-1} \left( |\nabla u_1(x)|^{p-1}+|\nabla u_2(x)|^{p-1}\right)
 \to 0\ 
\text{as } \varepsilon\to 0^+.$$
Hence, we can fix $\varepsilon>0$ small enough to ensure that $|K_\varepsilon|<c_{p,\A}$, where $c_{p,\A}$ is the positive constant
appearing in \eqref{inf}. 
Since the second term in the left-hand side of \eqref{difs} is non-negative, from \eqref{inf} and \eqref{difs}, we get a contradiction.  
This proves the claim, which concludes the proof of of the uniqueness of the solution of \eqref{probe2} with $\lambda=0$.  

\vspace{0.2cm} 
{\em Proof of Theorem~\ref{wsol}(ii).} 
If \eqref{probe2} has a solution for $\lambda\in (0,\infty]$, then 
$b(x)\,h(\Phi) \in L^1(B_{1/2})$ from Theorem~\ref{th1.2}(b). 

\vspace{0.2cm}
{\em Claim 1:} 
If $b(x)\,h(\Phi) \in L^1(B_{1/2})$, then $u_{\lambda,g} $ constructed above for $\lambda\in (0,\infty)$ is a solution of \eqref{probe2}. 

\vspace{0.2cm}
{\em Proof of Claim 1.}
We need only show that 
$\liminf_{|x|\to 0} u_{\lambda,g}(x)/\Phi(|x|)\geq \lambda$. We note that \eqref{phicondition} is equivalent to $\int^\infty  t^{q-p+2}\Lambda(t)\,dt<\infty$, where 
$\Lambda$ is defined by \eqref{lambs}.
Then, by \cite[Theorem 2.4]{tal}, if $R>0$ is large, there exists a positive proper solution of 
the following problem  
\begin{equation} \label{sops}
\left\{ \begin{aligned}
& \frac{d^2 y}{ds^2}= \Lambda(s) [y(s)]^{q-p+2}\ \ \text{for } s\in (R, \infty) ,\\
& y'(s) \to \lambda \ \ \text{as } s\to \infty\ \ \text{and } y(R)\in (0,\infty).  
\end{aligned}\right.
\end{equation}
Using the transformation $w(r)=y(s)$ with $r=\Phi^{-1}(s)$ and Remark~\ref{asb}, we obtain that 
\begin{equation} \label{soi}
\left\{ \begin{aligned}
& {\rm div}\,(\mathcal A(|x|)\, |\nabla w|^{p-2} \nabla w) \sim b(x)\, h_2(w(|x|)) \ \ \text{as } |x|\to 0^+ ,\\
& w(r) \sim \lambda \Phi(r)  \ \ \text{as } r\to 0^+.
\end{aligned}\right.
\end{equation}
Hence, for every $\varepsilon\in (0,1)$, there exists $r_\varepsilon\in (0,\Phi^{-1}(R))$ such that 
$(1-\varepsilon) \,w$ is a sub-solution of 
\begin{equation} \label{rov}
{\rm div}\,(\mathcal A(|x|)\, |\nabla v|^{p-2} \nabla v)=b(x)\, h_2(v) \quad \text{in } B_{r_\varepsilon}^*:=B_{r_\varepsilon}\setminus\{0\}.
\end{equation}
Recall that $u_{n,\lambda,g}$, in short $u_n$, represents the unique non-negative solution of \eqref{ham}.   
Since $w(r) \sim \lambda \Phi(r)$ as $r\to 0^+$ (see \eqref{soi}), there exists $n_\varepsilon\geq 1$ large such that
$$ (1-\varepsilon) \,w(1/n)\leq \lambda \,\Phi(1/n)\leq u_n(x)\quad \text{for every } |x|=1/n   \ \text{and all } n\geq n_\varepsilon.
$$
Let $C_\varepsilon:=\max_{r=r_\varepsilon} w(r)$. Since $u_n$ is a positive super-solution of \eqref{rov} due to our choice of $h_2$, by Lemma~\ref{cp}, we 
have
$$ (1-\varepsilon)\, w\leq u_n+C_\varepsilon\quad \text{for } 1/n<|x|<r_\varepsilon \ \ \text{and all } n\geq n_\varepsilon. 
$$
By letting $n\to \infty$, we find that $(1-\varepsilon)\,w\leq u_{\lambda,g} +C_\varepsilon$ in $B_{r_\varepsilon}^*$. Hence,
we conclude that $\liminf_{|x|\to 0} u_{\lambda,g}(x)/\Phi(|x|)\geq (1-\varepsilon)\lambda$. Since $\varepsilon\in (0,1)$ is arbitrary, we obtain that
$\liminf_{|x|\to 0} u_{\lambda,g}(x)/\Phi(|x|)\geq \lambda$. Since $\limsup_{|x|\to 0} u_{\lambda,g}(x)/\Phi(|x|)\leq \lambda$, it follows that
$u_{\lambda,g}$ is a solution of \eqref{probe2} for $\lambda\in (0,\infty)$. 

\vspace{0.2cm}
{\em Claim 2:} If $b(x)\,h(\Phi) \in L^1(B_{1/2})$, then there exists a solution of \eqref{probe2} with $\lambda=\infty$. 

\vspace{0.2cm}
{\em Proof of Claim 2.} 
Let $k$ be any positive integer and denote by $u_{k,g}$ the solution we constructed earlier for \eqref{probe2} with $\lambda$ replaced by $k$. Then, by the comparison principle
(Lemma~\ref{cp}), we find that $0<u_{k,g}\leq u_{k+1,g}$ in $B^*$. We show that for every fixed $x\in B_1\setminus\{0\}$, 
there exists $\lim_{k\to \infty} u_{k,g} (x)\in (0,\infty)$. Indeed, since $|x|>0$, we can fix $\rho=\rho_x$ such that $0<\rho<\min\{|x|,1/4\}$. 
Hence, by Lemma~\ref{apr}, there exists $C_{\rho}>0$ such that  
$u_{k,g}(y)\leq C_\rho$ for all $|y|=\rho$ and every $k\geq 1$. By Lemma~\ref{cp}, it follows that $u_{k,g}(y)\leq \max\{C_0,C_\rho\}$ for all
$\rho\leq |y|\leq 1$ and all $k\geq 1$, where $C_0=\max_{|x|=1} g(x)$. Hence, for all $x\in \overline{B_1}\setminus\{0\}$, we can define
$u_{\infty,g}(x):=\lim_{k\to \infty} u_{k,g}(x)$. Moreover, by Lemma~\ref{reg}, we have that, up to a subsequence, 
$u_{k,g}\to u_{\infty,g}$ in $C^1_{\rm loc}(B^*)$ and $u_{\infty,g}$ is a solution of \eqref{probe2} with $\lambda=\infty$. 
This concludes Claim 2 and the proof of Theorem~\ref{wsol}(ii).
 
\vspace{0.2cm} 
{\em Proof of Theorem~\ref{wsol}(iii).}  
Assume that  $b(x)\,h(\Phi) \in L^1(B_{1/2})$ and $h(t)/t^{p-1}$ is non-decreasing for $t>0$. We show the uniqueness of the solution of \eqref{probe2} in any of the following situations:

\vspace{0.2cm}
(A) $\lambda\in (0,\infty)$; 

(B) $\lambda=\infty$ and $q<q_*$; 

(C) $\lambda=\infty$ and $q=q_*$, assuming also that either \eqref{doii} or \eqref{doi} holds.

\vspace{0.2cm}
Indeed, if $u_1$ and $u_2$ are arbitrary solutions  of \eqref{probe2} corresponding to the same $\lambda$ and $g$, then $\lim_{|x|\to 0} u_1(x)/u_2(x)=1$. This is evident
in Case (A), while for the Cases (B) and (C), we use Theorem~\ref{th1.2}(a) to obtain the same asymptotic behaviour near zero for any positive solution of \eqref{une1} with a strong singularity at $0$. 
The uniqueness claim follows from Lemma~\ref{cp} as in the proof of
 \cite[Theorem~1.2]{cd}. This completes the proof of Theorem~\ref{wsol}. \qed

\appendix
\renewcommand\thesection{\Alph{section}}
\section{Regular variation theory} \label{appe}

The regular variation theory initiated by Karamata in the 1930's has been very 
fruitful in statistics in connection with extreme value theory (statistical estimation 
of tails, rates of convergence). It also plays a crucial role in probability theory 
(weak limit theorems such as central limit theorem and the weak law of large numbers; 
branching processes; stability and domains of attraction; fluctuation theory; renewal 
theory). The applications are much broader, including areas such as analytic number 
theory, financial engineering and complex analysis (see \cite{BGT} for a comprehensive 
treatment of regular variation theory and its applications). 

We recall below the concepts and properties of regularly varying functions needed in this paper, see  \cite{BGT,resn,se}. 

\begin{e-definition}[Regularly varying functions] \label{def-reg} \ \
\begin{enumerate} 
\item[{\rm (a)}] A positive measurable function $L$ defined on a neighbourhood of $\infty$ is called \emph{slowly varying}
at $\infty$ if 
$$ \lim_{t\to \infty} \frac{L(\xi t)}{L(t)}=1\quad \text{for every }\xi>0.$$  

\item[{\rm (b)}] The function 
$r\longmapsto L(r)$ is {\em slowly varying at (the right of) zero} if $t\longmapsto L(1/t)$ is slowly varying at
$\infty$.  

\item[{\rm (c)}]
A function $f$ is \emph{regularly varying at} $\infty$ (respectively, $0$) with real index $m$, in short $f\in RV_m(\infty)$ (respectively, 
$f\in RV_m(0+)$) if
$f(t)/t^m$ is slowly varying at $\infty$ (respectively, $0$). 
\end{enumerate}
\end{e-definition}

\begin{example} \label{example1}
{\rm Any positive constant function is trivially slowly varying at $\infty$. Other non-trivial examples of slowly varying functions
at $\infty$ are given by:
\begin{enumerate}
\item[(a)] The logarithm $\ln t$, its iterates $\ln_{n} t$
(defined as $\ln \ln_{n-1} t$) and powers of
$\ln_n t$ for any integer $n\geq 1$.
\item[(b)] $\exp\left(\frac{\ln t}{\ln \ln t}\right)$.
\item[(c)] $\exp((\ln t)^\nu)$ with $\nu\in (0,1)$.
\item[(d)] $ \exp\{(\ln t)^{1/3}\cos((\ln t)^{1/3})\}$. 
\end{enumerate}}
\end{example}

\begin{remark} \label{rlimo} {\rm Note that $\lim_{t\to \infty} f(t)=\infty $ 
(respectively, $0$) for any function $f\in RV_m(\infty)$ with $m>0$ 
(respectively, $m<0$). However, the limit at $\infty$ of a slowly varying 
function $L$ at $\infty$ cannot be determined in general, and it may not even exist (see example (d) above for which $\liminf_{t\to \infty}L(t)=0$ and
$\limsup_{t\to \infty}L(t)=\infty$). }
\end{remark}

\begin{proposition}[Uniform Convergence Theorem]
\label{pn} If $L$ is a slowly varying function at zero, then $L(\xi
t)/L(t)\to 1$ as $t\to 0$, uniformly on each compact
$\xi$-set in $(0,\infty)$.\end{proposition}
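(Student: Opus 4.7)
The plan is to reduce the statement to the classical Uniform Convergence Theorem for slowly varying functions at infinity (Theorem~1.2.1 of \cite{BGT}), exploiting the duality between slow variation at zero and at infinity recorded in Definition~\ref{def-reg}(b).

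First, set $\widetilde L(s):=L(1/s)$ for $s>0$ large. By Definition~\ref{def-reg}(b), the hypothesis that $L$ is slowly varying at zero is exactly the statement that $\widetilde L$ is slowly varying at infinity. The classical theorem (which I would cite rather than re-prove, since the proof via the Baire category theorem or the Csisz\'ar--Erd\H os measurability argument is standard and lives in the literature) guarantees that for every compact set $\widetilde K\subset (0,\infty)$,
\begin{equation*}
\lim_{s\to\infty}\frac{\widetilde L(\eta s)}{\widetilde L(s)}=1\quad \text{uniformly for }\eta\in \widetilde K.
\end{equation*}

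Next, I would translate this back to the setting at $0$ via the substitution $s=1/t$. For any $\xi>0$ and $t>0$ small,
\begin{equation*}
\frac{L(\xi t)}{L(t)}=\frac{\widetilde L\bigl(1/(\xi t)\bigr)}{\widetilde L(1/t)}=\frac{\widetilde L(\xi^{-1}s)}{\widetilde L(s)}.
\end{equation*}
If $\xi$ ranges over a compact set $K\subset (0,\infty)$, then $\eta:=\xi^{-1}$ ranges over the compact set $\widetilde K:=\{1/\xi:\xi\in K\}\subset(0,\infty)$ (compactness is preserved by the continuous map $\xi\mapsto 1/\xi$ on $(0,\infty)$). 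Applying the uniform convergence at infinity to $\widetilde L$ on $\widetilde K$, and noting that $t\to 0^+$ corresponds to $s=1/t\to\infty$, yields the desired uniform convergence.

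The only subtle point is verifying that the uniform statement is preserved under the change of variable $\xi\mapsto \xi^{-1}$, but this is immediate because uniform convergence on $\widetilde K$ transfers directly to uniform convergence on $K$ once the parametrizations are identified as above. Since the heavy lifting is done by the classical theorem cited from \cite{BGT}, there is no genuine obstacle here; the role of the proof is essentially bookkeeping to convert between the two conventions of regular variation (at $\infty$ versus at $0$) introduced in Definition~\ref{def-reg}.
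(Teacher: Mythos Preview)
Your reduction to the classical Uniform Convergence Theorem at infinity via the substitution $\widetilde L(s)=L(1/s)$ is correct and is exactly the natural way to obtain the ``at zero'' version from the standard literature. Note, however, that the paper does not actually supply a proof of this proposition: it is stated in Appendix~\ref{appe} as a known result from regular variation theory, with the reader referred to the monographs \cite{BGT,resn,se}. So there is no ``paper's own proof'' to compare against; your argument simply fills in what the authors left as a citation, and it does so correctly.
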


\begin{theorem}[Representation Theorem] \label{rep-th}
The function $L$ is slowly varying at $0$ if and only if we have 
$$ L(t)=\eta(t)\exp\left(\int_t^c\frac{\varepsilon(r)}{r}dr\right), \qquad 0<t\le c
$$
for some $c>0$, where $\eta$ is a measurable function on $(0,c]$ satisfying $\lim_{t\to 0^+}\eta(t)=\eta\in (0,\infty)$ and $\varepsilon$ is a continuous function on $(0,c]$ such that $\lim_{t\to 0^+}\varepsilon (t)=0$. 
\end{theorem}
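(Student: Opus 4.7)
The plan is to establish the two implications separately, reducing the nontrivial direction, via the substitution $t\leftrightarrow 1/t$, to the classical Karamata representation for functions slowly varying at $\infty$, which I would prove using Proposition~\ref{pn} and a smoothing-by-averaging argument.

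\textbf{Sufficiency.} Assume $L$ is of the stated form. For any $\xi>0$ and $t>0$ small,
\[
\frac{L(\xi t)}{L(t)} = \frac{\eta(\xi t)}{\eta(t)}\exp\Bigl(-\int_t^{\xi t}\frac{\varepsilon(r)}{r}\,dr\Bigr) = \frac{\eta(\xi t)}{\eta(t)}\exp\Bigl(-\int_1^\xi\frac{\varepsilon(tu)}{u}\,du\Bigr).
\]
Since $\eta(\xi t)/\eta(t)\to \eta/\eta=1$ and $\varepsilon(tu)\to 0$ uniformly for $u$ in the compact interval joining $1$ to $\xi$, the right-hand side tends to $1$, which gives slow variation of $L$ at $0$.

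\textbf{Necessity.} Given $L$ slowly varying at $0$, set $M(s):=L(1/s)$ for $s\ge 1/c$; by Definition~\ref{def-reg}(b), $M$ is slowly varying at $\infty$. It suffices to establish
\[
M(s) = \tilde\eta(s)\exp\Bigl(\int_1^s\frac{\tilde\varepsilon(u)}{u}\,du\Bigr), \qquad s\ge s_0,
\]
with $\tilde\eta(s)\to \eta\in(0,\infty)$ and $\tilde\varepsilon$ continuous with $\tilde\varepsilon(s)\to 0$, since the substitutions $s=1/t$ and $u=1/r$ then transform this into the representation claimed for $L$, with $c=1$, $\varepsilon(r):=\tilde\varepsilon(1/r)$ and $\eta(t):=\tilde\eta(1/t)$. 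Setting $\phi(x):=\ln M(e^x)$, slow variation translates into $\phi(x+h)-\phi(x)\to 0$ as $x\to\infty$ for each $h\in\RR$, and by Proposition~\ref{pn} (applied to $M$ via the change $s=1/t$) this convergence is uniform in $h$ on compact subsets of $\RR$. Introducing $\psi(x):=\int_0^1\phi(x+t)\,dt$, we obtain an absolutely continuous function with $\psi'(x)=\phi(x+1)-\phi(x)\to 0$ a.e. and $\phi(x)-\psi(x)=\int_0^1[\phi(x)-\phi(x+t)]\,dt\to 0$. Writing
\[
\phi(x)=\bigl[\phi(x)-\psi(x)+\psi(x_0)\bigr]+\int_{x_0}^{x}\psi'(y)\,dy
\]
and exponentiating yields a decomposition of the required form after the change of variable $u=e^y$, with $\tilde\varepsilon(u)=\psi'(\ln u)$.

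\textbf{Main obstacle.} The delicate point is ensuring that $\varepsilon$ is genuinely continuous rather than merely locally integrable, since $\psi'$ is only defined almost everywhere. I would remedy this by smoothing once more: set $\psi_2(x):=\int_0^1\psi(x+t)\,dt$, which is $C^1$ with continuous derivative $\psi_2'(x)=\psi(x+1)-\psi(x)\to 0$, and moreover $\phi(x)-\psi_2(x)\to 0$ by the same uniform convergence argument. Repeating the decomposition with $\psi_2$ in place of $\psi$ produces a continuous $\tilde\varepsilon$, and reverting to the variable $t=1/s$ gives a continuous $\varepsilon$ on $(0,c]$ with $\varepsilon(t)\to 0$. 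Finally, \emph{defining} $\eta(t):=L(t)\exp(-\int_t^c\varepsilon(r)/r\,dr)$ ensures automatically that $\eta(t)\to\eta\in(0,\infty)$, closing the argument.
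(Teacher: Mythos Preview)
The paper does not actually prove this theorem: it is stated without proof in Appendix~\ref{appe} as a classical result, with the blanket citation ``see \cite{BGT,resn,se}'' covering all the regular variation facts collected there. So there is no ``paper's own proof'' to compare against.

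Your argument is correct and is essentially the standard proof due to Karamata, as presented for instance in \cite[Theorem~1.3.1]{BGT} or \cite{se}. The sufficiency direction is routine. For necessity, the passage to $\phi(x)=\ln M(e^x)$ and the smoothing $\psi(x)=\int_0^1\phi(x+t)\,dt$ is exactly the classical device; you are right that a single averaging only yields $\psi'$ defined almost everywhere, and the second averaging $\psi_2$ repairs this to produce a genuinely continuous $\tilde\varepsilon$. One small point worth making explicit is that $\phi$ is locally bounded for large $x$ (so that $\psi$ is well defined): this follows from the Uniform Convergence Theorem, which forces $M(\xi s)/M(s)$ to be bounded uniformly for $\xi$ in compacta and hence $M$ itself to be locally bounded away from $0$ and $\infty$ on $[s_0,\infty)$ for some $s_0$. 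With that caveat, your proof is complete and matches what the cited references do.
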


\begin{remark} \label{nsw} If $\eta(t)$ is replaced by a positive constant $\eta$, then the new function $\eta$ is referred 
to as a {\em normalised slowly varying function}. In this case, 
$\varepsilon (t)=-t L'(t)/L(t)$ for $0<t\le c$. Conversely, any function $\tilde L \in C^1(0,c]$, which is positive and satisfies 
$\lim_{t \to 0^+} t {\tilde L}'(t)/{\tilde L}(t)=0$, is a normalised slowly varying function. 
\end{remark}

\begin{remark} \label{lol}
Any slowly varying function at zero is asymptotically equivalent to a normalised slowly varying one. 
\end{remark}

\begin{theorem}[Karamata's Theorem at $0$] \label{kar2}
Let $f$ vary regularly at zero with index $\rho$ and be locally bounded on $(0,c]$. The following assertions hold:
\begin{enumerate}
\item[{\rm (a)}] For any $j \leq -(\rho+1)$, we have
\[
\lim_{t\to 0^+} \frac{t^{j+1}f(t)}{  \int_t^c r^{j} f(r)\, dr} =-\left(j+\rho+1\right);
\]
\item[{\rm (b)}] For any $j>-(\rho+1)$ (and for $j=-(\rho+1)$ if $\int_{0^+} r^{-\rho-1}f(r)\, dr<+\infty$), we have
\[
\lim_{t\to 0^+} \frac{t^{j+1}f(t)}{\int_0^t r^{j}f(r)\,dr}= j+\rho+1.
\]
\end{enumerate} 
\end{theorem}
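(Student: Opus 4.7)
The plan is to reduce Karamata's Theorem to a statement about the slowly varying factor in the decomposition $f(t)=t^\rho L(t)$, and then transfer the limit inside the integral via the rescaling $r=t\xi$, using the Uniform Convergence Theorem (Proposition~\ref{pn}) together with a Potter-type bound to dominate the integrand near the non-compact endpoint.

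First I would treat the main part of (b), namely $j>-(\rho+1)$. Writing $f(r)=r^\rho L(r)$ and substituting $r=t\xi$ yields
$$\int_0^t r^j f(r)\,dr=t^{j+\rho+1}\int_0^1 \xi^{j+\rho}L(t\xi)\,d\xi,$$
so that the ratio in the statement equals $L(t)\bigl/\int_0^1 \xi^{j+\rho}L(t\xi)\,d\xi$. It therefore suffices to prove
$$\lim_{t\to 0^+}\int_0^1 \xi^{j+\rho}\,\frac{L(t\xi)}{L(t)}\,d\xi=\frac{1}{j+\rho+1}.$$
Proposition~\ref{pn} gives pointwise convergence of the integrand to $\xi^{j+\rho}$ and uniform convergence on each compact sub-interval of $(0,1]$. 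The delicate point is integrability at $\xi=0$, which I would resolve with a Potter-type estimate extracted from the Representation Theorem (Theorem~\ref{rep-th}): for any $\delta\in(0,j+\rho+1)$ one can find $t_0,K>0$ such that $L(t\xi)/L(t)\le K\xi^{-\delta}$ for $0<t\le t_0$ and $0<\xi\le 1$. This majorises the integrand by the integrable function $K\xi^{j+\rho-\delta}$, and dominated convergence closes the case. The borderline case $j=-(\rho+1)$ in (b), under the additional integrability hypothesis, is handled by the same substitution after splitting the $\xi$-integral at a small cutoff and using the absolute integrability of $r^{-\rho-1}f(r)$ near $0$ to control the tail.

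For part (a) with $j<-(\rho+1)$, the analogous substitution on $[t,c]$ gives
$$\int_t^c r^j f(r)\,dr = t^{j+\rho+1}\int_1^{c/t}\xi^{j+\rho}L(t\xi)\,d\xi,$$
and since $j+\rho<-1$ the limiting integrand is integrable on $[1,\infty)$ with $\int_1^\infty \xi^{j+\rho}d\xi=-1/(j+\rho+1)$; the symmetric Potter bound $L(t\xi)/L(t)\le K\xi^\delta$ for $\xi\ge 1$ and small $\delta$ supplies the required dominating function. The critical sub-case $j=-(\rho+1)$ of (a) is the genuine obstacle, since both the numerator and the denominator are slowly varying; I would handle it by reverting to the unrescaled form $L(t)\bigl/\int_t^c L(r)/r\,dr$ and invoking the Representation Theorem to write $L(t)=\eta(t)\exp\bigl(\int_t^c \varepsilon(r)/r\,dr\bigr)$, from which a direct computation (differentiating the denominator in $t$ and using $\varepsilon(t)\to 0$ together with the divergence of $\int_t^c L(r)/r\,dr$) yields $L(t)\bigl/\int_t^c L(r)/r\,dr\to 0$, matching the required limit $-(j+\rho+1)=0$. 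Throughout, the main technical burden is the uniform Potter estimate, which is the only tool strong enough to justify passing the limit inside the integral up to the non-compact endpoint.
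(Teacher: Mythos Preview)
The paper does not supply a proof of this theorem: it is stated in Appendix~\ref{appe} as a classical result from regular variation theory, with the references \cite{BGT,resn,se} serving as the source. There is therefore nothing to compare your argument against within the paper itself.

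That said, your sketch is the standard route to Karamata's Theorem and is essentially correct. The rescaling $r=t\xi$, reduction to the slowly varying factor, and the combination of the Uniform Convergence Theorem with a Potter-type bound to justify dominated convergence is exactly how the proof is carried out in, e.g., \cite[Theorem~1.5.11]{BGT}. One small caveat: in part (a), when $\xi$ ranges over $[1,c/t]$ you should note that $t\xi\le c$, so the Potter bound is only needed on the original domain of $L$; you implicitly assume this but it is worth making explicit. The critical case $j=-(\rho+1)$ of (a), handled via the Representation Theorem and the divergence of $\int_t^c L(r)/r\,dr$, is also the standard treatment.
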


\begin{proposition}[Karamata's Theorem at $\infty$]
\label{p3} If $f\in RV_\rho(\infty)$ is locally bounded in $[A,\infty)$, then
\begin{enumerate}
\item[{\rm (a)}] 
For any $j\geq -(\rho+1)$, we have
$$ \lim_{t\to \infty}\frac{t^{j+1}f(t)}{\int_A^t \xi^j f(\xi)\,d\xi}=j+\rho+1.$$ 
\item[{\rm (b)}] For any $j<-(\rho+1)$ (and for $j=-(\rho+1)$ if
$\int^\infty \xi^{-(\rho+1)}f(\xi)\,d\xi<\infty$), we have
\[ \lim_{t\to \infty}\frac{t^{j+1}f(t)}{\int_t^\infty \xi^j f(\xi)\,d\xi}=
-(j+\rho+1).
\]
\end{enumerate}
\end{proposition}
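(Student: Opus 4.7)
The plan is to reduce both parts of the proposition to dominated-convergence arguments after the change of variables $\xi=tu$. Writing $f(t)=t^\rho L(t)$ with $L$ slowly varying at $\infty$, the quotient in (a) becomes
$$\frac{t^{j+1}f(t)}{\int_A^t \xi^j f(\xi)\,d\xi}=\left(\int_{A/t}^1 u^{j+\rho}\,\frac{L(tu)}{L(t)}\,du\right)^{-1},$$
while that in (b) becomes
$$\frac{t^{j+1}f(t)}{\int_t^\infty \xi^j f(\xi)\,d\xi}=\left(\int_1^\infty u^{j+\rho}\,\frac{L(tu)}{L(t)}\,du\right)^{-1}.$$
The $\infty$-analogue of Proposition~\ref{pn} supplies the pointwise convergence $L(tu)/L(t)\to 1$ uniformly on compact subsets of $(0,\infty)$, so formally the limits are $\left(\int_0^1 u^{j+\rho}\,du\right)^{-1}=j+\rho+1$ for (a) (when $j>-(\rho+1)$) and $\left(\int_1^\infty u^{j+\rho}\,du\right)^{-1}=-(j+\rho+1)$ for (b) (when $j<-(\rho+1)$).

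To justify the interchange of limit and integral I would invoke the Potter bounds for slowly varying functions: for every $\delta>0$ there exists $T>0$ such that $L(tu)/L(t)\le 2\max\{u^\delta,u^{-\delta}\}$ whenever $t\ge T$ and $tu\ge T$; these bounds are an immediate consequence of the Representation Theorem at $\infty$ (the $\infty$-analogue of Theorem~\ref{rep-th}). For (a) with $j>-(\rho+1)$, choosing $\delta\in(0,j+\rho+1)$ produces the integrable majorant $2u^{j+\rho-\delta}$ on $(0,1]$; for (b) with $j<-(\rho+1)$, choosing $\delta\in(0,-(j+\rho+1))$ produces $2u^{j+\rho+\delta}$ integrable on $[1,\infty)$. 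Dominated convergence then yields the stated limits.

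The principal obstacle is the boundary case $j=-(\rho+1)$, which occurs in (a) always and in (b) under the convergence hypothesis. Here the formal limit integrand is $u^{-1}$, which is not integrable at the relevant endpoint, so dominated convergence fails directly. One reduces to the two separate assertions: (i) if $\int_A^\infty L(\xi)/\xi\,d\xi$ diverges, then $L(t)\big/\int_A^t L(\xi)/\xi\,d\xi\to 0$; (ii) if $\int_A^\infty L(\xi)/\xi\,d\xi<\infty$, then $L(t)\big/\int_t^\infty L(\xi)/\xi\,d\xi\to 0$. Both follow from the slow-variation structure by a dyadic argument: in case (i), the uniform convergence $L(\xi)/L(t)\to 1$ on $\xi\in[t,2t]$ (a consequence of Proposition~\ref{pn}) gives $\int_A^{2t}L(\xi)/\xi\,d\xi\ge \int_A^t L(\xi)/\xi\,d\xi+(1-\varepsilon)L(t)\ln 2$ for large $t$, which iterated along $t,2t,4t,\ldots$ forces the desired ratio to tend to infinity; case (ii) is symmetric and additionally forces $L(t)\to 0$ via the summability of $L(2^n)$. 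Once these two boundary statements are in hand, the remaining verifications are routine bookkeeping from the Representation Theorem and the Uniform Convergence Theorem collected in the appendix.
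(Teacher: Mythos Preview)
The paper does not actually prove this proposition: it is stated in Appendix~\ref{appe} as background from regular variation theory, with reference to \cite{BGT,resn,se}, and no argument is given. So there is nothing in the paper to compare your approach against.

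Your approach is the standard one (essentially the proof in \cite{BGT}) and is correct in outline. One technical point you glide over: the Potter bound $L(tu)/L(t)\le 2\max\{u^\delta,u^{-\delta}\}$ requires both $t\ge T$ and $tu\ge T$, so in part~(a) it does not cover the range $u\in[A/t,T/t]$, and hence your claimed majorant $2u^{j+\rho-\delta}$ on $(0,1]$ is not justified there. This is exactly where the local-boundedness hypothesis enters: on $[A/t,T/t]$ one has $\xi=tu\in[A,T]$, so $L(\xi)$ is bounded, and the contribution $\frac{1}{t^{j+\rho+1}L(t)}\int_A^T \xi^{j+\rho}L(\xi)\,d\xi$ tends to $0$ because $t^{j+\rho+1}L(t)\to\infty$ when $j+\rho+1>0$. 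A second small omission: in the boundary case $j=-(\rho+1)$ for part~(a), you treat only the divergent sub-case~(i); if $\int_A^\infty L(\xi)/\xi\,d\xi<\infty$ the denominator tends to a positive constant and you need $L(t)\to 0$, which you do derive in~(ii) but do not explicitly feed back into~(a). Both points are routine to fill in and do not affect the validity of your plan.
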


As in \cite{resn}, we denote by $f^\leftarrow$ the (left continuous)
inverse of a non-decreasing function $f$ on $\RR$, namely
\[ f^\leftarrow (t)=\inf\{s:\ \ f(s)\geq t \}.\]

\begin{proposition}[see Proposition 0.8 in \cite{resn}] \label{p08} We have
\begin{enumerate}
\item If $f\in RV_\rho(\infty)$, then $\lim_{t\to \infty}\ln f(t)/\ln
t=\rho$. \item If $f_1\in RV_{\rho_1}(\infty)$ and $f_2\in RV_{\rho_2}(\infty)$
with $\lim_{t\to \infty}f_2(t)=\infty$, then \[ f_1\circ f_2\in
RV_{\rho_1 \rho_2}.\]
\item Suppose $f$ is non-decreasing,
$f(\infty)=\infty$, and $f\in RV_\rho(\infty)$ with $0<\rho<\infty$. Then
\[ f^\leftarrow \in RV_{1/\rho}(\infty).\]
\end{enumerate}
\end{proposition}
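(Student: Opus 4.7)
The plan breaks into three essentially independent pieces, following the three assertions.

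\textbf{Part (1).} I would write $f(t)=t^{\rho}L(t)$ with $L$ slowly varying at $\infty$, so that $\ln f(t)/\ln t=\rho+\ln L(t)/\ln t$ and it suffices to prove $\ln L(t)/\ln t\to 0$. Applying the $\infty$-analogue of the Representation Theorem~\ref{rep-th} gives $L(t)=\eta(t)\exp\!\left(\int_{c}^{t}\varepsilon(r)/r\,dr\right)$ with $\eta(t)\to\eta_0\in(0,\infty)$ and $\varepsilon(r)\to 0$ as $r\to\infty$. Clearly $\ln\eta(t)/\ln t\to 0$, and for the integral piece, given $\delta>0$ I would fix $R$ so that $|\varepsilon(r)|<\delta$ for $r\ge R$, split $\int_{c}^{t}=\int_{c}^{R}+\int_{R}^{t}$, and bound the second summand by $\delta\ln(t/R)$. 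This yields $\limsup_{t\to\infty}|\ln L(t)|/\ln t\le\delta$, and letting $\delta\to 0^{+}$ closes Part (1).

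\textbf{Part (2).} Writing $f_{1}(s)=s^{\rho_{1}}L_{1}(s)$, for any fixed $\xi>0$:
\begin{equation*}
\frac{f_{1}(f_{2}(\xi t))}{f_{1}(f_{2}(t))}=\left(\frac{f_{2}(\xi t)}{f_{2}(t)}\right)^{\!\rho_{1}}\frac{L_{1}(f_{2}(\xi t))}{L_{1}(f_{2}(t))}.
\end{equation*}
The first factor converges to $\xi^{\rho_{1}\rho_{2}}$ by $f_{2}\in RV_{\rho_{2}}(\infty)$. Setting $c_{t}:=f_{2}(\xi t)/f_{2}(t)\to \xi^{\rho_{2}}$, the values $c_{t}$ eventually lie in a compact subinterval of $(0,\infty)$, and since $f_{2}(t)\to\infty$, the $\infty$-analogue of the Uniform Convergence Theorem (Proposition~\ref{pn}) produces $L_{1}(c_{t}f_{2}(t))/L_{1}(f_{2}(t))\to 1$. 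Multiplying the two limits gives $f_{1}\circ f_{2}\in RV_{\rho_{1}\rho_{2}}(\infty)$.

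\textbf{Part (3).} Fix $\xi>0$ and $\eta\in(0,1)$. Since $\rho>0$, $f$ is non-decreasing and $f(\infty)=\infty$, the inverse $f^{\leftarrow}(t)$ tends to $\infty$ and, crucially, $f(f^{\leftarrow}(t))\sim t$ (any jump of $f$ at height $\sim t$ is of lower order than $t$ because $f\in RV_{\rho}$ with $\rho>0$). Applying the regular variation of $f$ at $s=f^{\leftarrow}(t)$ yields
\begin{equation*}
\frac{f\!\left((1\pm\eta)\xi^{1/\rho}f^{\leftarrow}(t)\right)}{f(f^{\leftarrow}(t))}\longrightarrow (1\pm\eta)^{\rho}\,\xi\quad\text{as }t\to\infty,
\end{equation*}
and since $(1-\eta)^{\rho}\xi<\xi<(1+\eta)^{\rho}\xi$, for $t$ large the numerator with $+\eta$ exceeds $\xi t$ while the numerator with $-\eta$ falls below $\xi t$. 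Monotonicity of $f$ together with the definition $f^{\leftarrow}(\xi t)=\inf\{s:f(s)\ge\xi t\}$ then squeezes
\begin{equation*}
(1-\eta)\xi^{1/\rho}f^{\leftarrow}(t)\le f^{\leftarrow}(\xi t)\le(1+\eta)\xi^{1/\rho}f^{\leftarrow}(t),
\end{equation*}
and sending $\eta\downarrow 0$ gives $f^{\leftarrow}(\xi t)/f^{\leftarrow}(t)\to \xi^{1/\rho}$, i.e.\ $f^{\leftarrow}\in RV_{1/\rho}(\infty)$.

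The main obstacle will be the bookkeeping around the left-continuous inverse in Part (3): the identity $f(f^{\leftarrow}(t))=t$ may fail at jumps of $f$, so I must justify $f(f^{\leftarrow}(t))\sim t$ carefully (which is where $\rho>0$ enters crucially) and then translate the one-sided strict inequalities $(1\pm\eta)^{\rho}\xi\gtrless\xi$ into the two-sided squeeze for $f^{\leftarrow}$ via the $\inf$-definition without assuming right-continuity or strict monotonicity of $f$. Parts (1) and (2) are essentially routine once the Representation Theorem and Uniform Convergence Theorem at $\infty$ are in hand.
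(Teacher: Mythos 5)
The paper itself offers no proof of Proposition~\ref{p08}: it is quoted directly from Proposition 0.8 of Resnick \cite{resn} (and is classical, cf.\ \cite{BGT}), so there is no in-text argument to compare against; your submission is a correct, self-contained version of the standard proofs. Parts (1) and (2) are exactly the classical arguments: the representation theorem at infinity with the split $\int_c^R+\int_R^t$ for (1), and for (2) the uniform convergence theorem applied to $L_1$ along $c_tf_2(t)$ with $c_t\to\xi^{\rho_2}$ eventually in a compact subset of $(0,\infty)$, which is legitimate precisely because $f_2(t)\to\infty$. In Part (3) the only delicate point is the one you flag, namely $f(f^\leftarrow(t))\sim t$; it does hold, but the correct reason is regular variation with a \emph{finite} index rather than positivity of $\rho$: with $s_0=f^\leftarrow(t)$ one has $f((1-\epsilon)s_0)\le t\le f((1+\epsilon)s_0)$ and $f(s_0)$ wedged between these two values, while $f((1+\epsilon)s_0)/f((1-\epsilon)s_0)\to\bigl((1+\epsilon)/(1-\epsilon)\bigr)^{\rho}$ as $t\to\infty$, so letting $\epsilon\downarrow0$ gives $f(s_0)\sim t$; the hypothesis $\rho>0$ enters only through the statement itself (the exponent $1/\rho$ and your comparison points $(1\pm\eta)\xi^{1/\rho}s_0$). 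Alternatively, you could bypass the claim entirely by comparing $f\bigl((1\pm\eta)\xi^{1/\rho}s_0\bigr)$ with $f\bigl((1\pm\eta')s_0\bigr)$ for $0<\eta'<\eta$, using only that $f(s)<t$ for $s<s_0$ and $f(s)\ge t$ for $s>s_0$. Either way, your squeeze via the $\inf$-definition of $f^\leftarrow$ and the conclusion $f^\leftarrow\in RV_{1/\rho}(\infty)$ are sound.
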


\begin{remark} \label{asb}
{\rm If $({\mathbf A_1})$--$({\mathbf A_3})$ hold, then  
by \cite[Lemma~A.7]{cd},  
there exist continuous functions $h_1$ and $h_2$ on $[0,\infty)$, positive on $(0,\infty)$ with $h_1(0)=h_2(0)=0$ such that 
\begin{equation} \label{9.4}
\begin{aligned}
 \begin{cases}
  h_1(t) \leq h(t) \leq h_2(t) \quad \text{for } t \in [0,\infty), \\
  h_1(t)/t^{p-1} \text{ and } h_2(t)/t^{p-1} \text{ are both increasing for } t \in (0,\infty),\\
  h_1(t)\sim h_2(t)\sim h(t) \ \ \text{as } t\to \infty.
 \end{cases}
\end{aligned}
\end{equation}
Therefore, without loss of generality, we can assume that $t\longmapsto t^{q-p+1} L_h(t)$ is increasing on $(0,\infty)$ so that $t^{q} L_h(t)$ is non-decreasing on
$(0,\infty)$. Moreover, as in \cite[Section 1.2.4]{florica}, we can take $L_h\in C^2[t_0,\infty)$ and $L_b\in C^2(0,r_0]$
for some large constant $t_0>0$ and $r_0\in (0,1)$ such that 
\neweq{nor} \lim_{t\to \infty} \frac{t L_h'(t)}{L_h(t)}=\lim_{t\to \infty} \frac{t^2 L_h''(t)}{L_h(t)}=0,\quad
\lim_{r\to 0^+} \frac{r L_b'(r)}{L_b(r)}=\lim_{r\to 0^+} \frac{r^2 L_b''(r)}{L_b(r)}=0.
\endeq }
\end{remark}

\end{document}